\newtheorem{theorem}{Theorem}[section]
\newtheorem{definition}[theorem]{Definition}
\newtheorem{remark}[theorem]{Remark}
\newtheorem{lemma}[theorem]{Lemma}
\newtheorem{proposition}[theorem]{Proposition}
\begin{document}

\title{On the stability of the ill-posedness of a quasi-linear wave equation in two dimensions with initial data in $H^{7/4} (\ln H^{-\beta})$}

\date{}

\author{Gaspard Ohlmann \\ University of Basel, \emph{gaspard.ohlmann@unibas.ch}}
\maketitle

\begin{center}
	\textbf{Abstract}
\end{center}

\emph{This article is the continuation of \cite{ohlmann2021illposedness} where we exhibited the ill-posedness of a quasi-linear wave equation in dimension $2$ for initial data in $H^{7/4}(\ln H^{7/4})$. Here, we look at modifications of the equation and show that the blow-up phenomenon still occurs. First, we study another equation with the same characteristics but a different underlying ODE. Later, we study an equation where a $x_2$ dependency is introduced. The latter case constitutes the main contribution of this paper, as we are able to show that the solution still behaves in a pathological way without having an explicit formula for neither the characteristics nor the values of the solution. Finally, we study the case where a perturbation of the initial data is introduced. }

\begin{center}
	\textbf{Keywords}
\end{center}

\emph{Partial Differential Equations, Well-posedness, geometric blow-up, Harmonic Analysis, Fractional calculus}

\tableofcontents
	
	\section{Introduction}

We study the well-posedness of quasi-linear wave equations. We will consider the following equation

\begin{equation}\label{QLW}
	\sum_{i,j=0}^n g^{ij}(u,u') \partial_{x^i} \partial_{x^j} u = F(u,u'), \hspace{0.2cm} (t,x) \in S_T = [0,T[\times \mathbb{R}^n,
\end{equation}
where $\partial_{x^0} = \partial_t$ and $G=(g^{ij})$ and $F$ are smooth functions. Also, we assume that $g$ is close to the Minkowski metric $m$ ; i.e.,

\begin{equation}\label{elli}
	\sum_{i,j=0}^n |g^{ij} - m^{ij}| \leq 1/2.
\end{equation}

We will also define the corresponding Cauchy problem, 

\begin{equation}\label{QLW1.5}
	\left\{
	\begin{aligned}
		&\sum_{i,j=0}^n g^{ij}(u,u') \partial_{x^i} \partial_{x^j} u = F(u,u'), \hspace{0.2cm} (t,x) \in S_T = [0,T[\times \mathbb{R}^n,\\
		&(u,\partial_t u)_{|t=0} = (f,h),
	\end{aligned}
	\right.
\end{equation}
where $\partial_{x^0} = \partial_t$ and $G=(g^{ij})$ and $F$ are smooth functions.

Throughout this article, we will consider $H^{s} (\ln H)^{-\beta}$ to be the following function space.

\begin{definition}
	We define the logarithmic Sobolev norm $||f||_{H^s (\ln H)^{-\beta}}$ as 
	
	\begin{equation}
		\begin{aligned}
		||f||_{\dot H^s (\ln H)^{-s}}^2 &= \left| \left| \mathcal{F}(f) \cdot |\xi|^s (1+|\ln(|\xi|)|)^{-\beta} \right| \right|_{L^2}^2\\
		&=\int_{\xi \in \mathbb{R}^d} \left[ \frac{|\xi|^s}{(1 + |\ln(|\xi|)|^\beta)} \int_{x \in \mathbb{R}^d} e^{-2 i \pi \langle x,\xi \rangle} f(x) dx \right]^2 d\xi.
		\end{aligned}
	\end{equation}

	We also define the space $\dot H^s (\ln H)^{-\beta}$ as 
	
	\begin{equation}
		\dot H^s (\ln H)^{-\beta}  = \{ f,~ ||f||_{\dot H^s (\ln H)^{-\beta}} < \infty \}.
	\end{equation}
	
\end{definition}

The intuition is that this space is located in between $\dot H^{s}$ and $\dot H^{s - \lambda}$ for any $\lambda>0$. In fact, we prove in \cite{ohlmann2021illposedness} the following properties.

\begin{proposition}\label{dotHlog}
	The space $\dot H^s (\ln H)^{-\beta}$ satisfies the following properties.
	\begin{enumerate}
		\item[(1)] 
			Let $f$ be a function in $L^1(\mathbb{R}^2)$. For any non-negative $s$ and $\beta$, for any small enough and positive $\lambda$, we have the following properties.
			\begin{itemize}
				\item[(i)] $||f||_{\dot H^{s}} < \infty \Rightarrow ||f||_{\dot H^{s}(\ln H)^{-\beta}} <\infty$,
				\item[(ii)] $||f||_{\dot  H^{s}(\ln H)^{-\beta}} <\infty  \Rightarrow ||f||_{\dot H^{s-\lambda}} < \infty$.
			\end{itemize}
		\item[(2)] Let $f \in L^2$.  
		Let $\beta$ and $s$ be two non-negative real numbers, $\lambda$ a small enough real number and $K$ a compact subset of $\mathbb{R}^2$. If $f$ is supported in $K$, then we have the two following:
		\begin{itemize}
			\item[(i)] $||f||_{\dot H^{s}} < \infty \Rightarrow ||f||_{\dot H^{s}(\ln H)^{-\beta}} <\infty$,
			\item[(ii)] $||f||_{\dot  H^{s}(\ln H)^{-\beta}} <\infty  \Rightarrow ||f||_{\dot H^{s-\lambda}} < \infty$.
		\end{itemize}
		\item[(3)] 	Let $s$ be a non-negative real number, and $f\in L^1_{loc}$. If $f$ belongs to $ \dot H^{s} (\ln H)^{-\beta}$ and is compactly supported, then $f$ belongs to $H^s (\ln H)^{-\beta}$.
	\end{enumerate}

\end{proposition}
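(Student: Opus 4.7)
The plan is to reduce all six embedding statements to pointwise comparisons of the multiplier weights
\[
w_s(\xi) = |\xi|^s, \qquad w_{s,\beta}(\xi) = |\xi|^s (1+|\ln|\xi||)^{-\beta}, \qquad w_{s-\lambda}(\xi) = |\xi|^{s-\lambda},
\]
integrated against $|\widehat f(\xi)|^2$. Since $(1+|\ln|\xi||)^{-\beta}\le 1$ for $\beta\ge 0$, the implications (i) in parts (1) and (2) are immediate: $w_{s,\beta}\le w_s$ pointwise, so the integrand for $\|f\|^2_{\dot H^s(\ln H)^{-\beta}}$ is dominated by that of $\|f\|^2_{\dot H^s}$, and one integration gives the result.

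For the implications (ii), the difficulty is that $w_{s-\lambda}/w_{s,\beta}=|\xi|^{-\lambda}(1+|\ln|\xi||)^{\beta}$ blows up as $|\xi|\to 0$, so a direct pointwise bound fails at low frequencies. I would split the integral
\[
\|f\|_{\dot H^{s-\lambda}}^2=\int_{|\xi|\le 1}|\xi|^{2(s-\lambda)}|\widehat f|^2\,d\xi + \int_{|\xi|>1}|\xi|^{2(s-\lambda)}|\widehat f|^2\,d\xi.
\]
On $\{|\xi|>1\}$ the elementary inequality $(1+\ln|\xi|)^{2\beta}\le C_{\beta,\lambda}|\xi|^{2\lambda}$ gives $w_{s-\lambda}^2 \le C\, w_{s,\beta}^2$, which bounds the high-frequency part by $\|f\|^2_{\dot H^s(\ln H)^{-\beta}}$. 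On $\{|\xi|\le 1\}$, the $L^1$ hypothesis in part (1) yields $\|\widehat f\|_\infty \le \|f\|_{L^1}$, so the low-frequency piece is controlled by $\|f\|_{L^1}^2\int_{|\xi|\le 1}|\xi|^{2(s-\lambda)}d\xi$, which is finite whenever $s-\lambda>-1$ in dimension $2$, i.e. whenever $\lambda<s+1$; taking $\lambda$ small enough suffices. Part (2)(ii) follows by the same split, using that compact support combined with $L^2$ forces $f\in L^1$ via Cauchy-Schwarz ($\|f\|_{L^1}\le |K|^{1/2}\|f\|_{L^2}$) and hence again $\widehat f\in L^\infty$.

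For part (3), I would pass from the homogeneous norm to the inhomogeneous norm by noting that $(1+|\xi|^2)^{s/2}$ and $(1+|\ln(1+|\xi|^2)^{1/2}|)^{-\beta}$ are comparable to $|\xi|^s$ and $(1+|\ln|\xi||)^{-\beta}$ respectively for $|\xi|\ge 1$, so the high-frequency part of $\|f\|_{H^s(\ln H)^{-\beta}}$ is dominated by $\|f\|_{\dot H^s(\ln H)^{-\beta}}$. The low-frequency part is finite because compact support together with $f\in L^1_{loc}$ gives $f\in L^1$, whence $\widehat f$ is continuous and bounded, and the weight $(1+|\xi|^2)^{s}(1+|\ln(1+|\xi|^2)^{1/2}|)^{-2\beta}$ is bounded on $\{|\xi|\le 1\}$.

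The only step that requires any care is the low-frequency estimate in (1)(ii) and (2)(ii), where one must check the integrability of $|\xi|^{2(s-\lambda)}$ near the origin; this is the quantitative reason for restricting to "small enough $\lambda$" and for the ambient dimension entering through the condition $\lambda<s+d/2$. The remainder of the argument is bookkeeping of weight comparisons.
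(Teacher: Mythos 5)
Your proof is correct. One small but important observation first: this paper does not actually prove Proposition~\ref{dotHlog}; it states the result and explicitly defers to the companion paper \cite{ohlmann2021illposedness} for the proof. So there is no ``paper's own proof'' here to compare against line by line.

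That said, your argument is the natural and standard one, and it is sound. The pointwise domination $(1+|\ln|\xi||)^{-\beta}\le 1$ gives the (i) directions immediately; the split at $|\xi|=1$ is exactly what is needed for the (ii) directions, because the logarithmic weight degenerates as $|\xi|\to 0$ and the $\dot H^s(\ln H)^{-\beta}$ norm therefore gives no information at low frequency, so you must use $\widehat f\in L^\infty$ (from $f\in L^1$, or from compact support plus $L^2$ via Cauchy--Schwarz) together with the local integrability of $|\xi|^{2(s-\lambda)}$ near the origin in dimension $2$. You correctly identify the smallness constraint $\lambda<s+1$ (i.e.\ $\lambda<s+d/2$). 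For part~(3) the comparison of homogeneous and inhomogeneous weights on $\{|\xi|\ge 1\}$ and the boundedness of the inhomogeneous weight on $\{|\xi|\le 1\}$, combined with $f\in L^1_{loc}$ plus compact support implying $f\in L^1$ and hence $\widehat f$ bounded, closes the argument. No gaps.
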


We now consider the model equation in dimension $2+1$ and the corresponding Cauchy problem

\begin{equation}\label{modelequation}
	\left\{
	\begin{aligned}
		&\Box u = (D u) D^{2} u,  \\
		&(u,\partial_t u)_{|t=0} = (f,g),
	\end{aligned}
	\right.
\end{equation}
where $D= (\partial_{x_1} - \partial_{t})$.

Note that (\ref{modelequation}) is of the form (\ref{QLW}) with

\begin{equation}\label{metricg}
	g=
	\begin{bmatrix}
		1-v & v & 0  \\
		v & -1-v & 0  \\
		0 & 0 & -1  \\
	\end{bmatrix}, v= Du.
\end{equation}

In \cite{ohlmann2021illposedness}, we show the following theorem, which states that the problem (\ref{QLW1.5}) is ill-posed in $H^{7/4} (\ln H)^{-\beta}$.

\begin{theorem}\label{maingeneral} 
	We consider $\beta>1/2$. There exists initial data $(f,g) \in \dot H^{11/4}(\ln H)^{-\beta} \times \dot H^{7/4}(\ln H)^{-\beta}$ supported on a compact set, with $||f||_{\dot H^{11/4}(\ln H)^{-\beta}} + ||g||_{\dot H^{7/4} (\ln H)^{-\beta}}$ arbitrarily small, such that (\ref{modelequation}) does not have any proper solution $u$ such that 
	
	$(u,\partial_t u) \in C\left( [0,T[; \dot H^{11/4}(\ln H)^{-\beta} (\mathbb{R}^2) \times \dot H^{7/4} (\ln H)^{-\beta}(\mathbb{R}^2) \right)$ for any $T>0$.
	
	In fact, for all $\lambda>0$ small enough, (\ref{modelequation}) does not have any proper solution $u$ such that \\ $(u,\partial_t u) \in C\left( [0,T[; \dot H^{11/4-\lambda} (\mathbb{R}^2) \times \dot H^{7/4-\lambda}(\mathbb{R}^2) \right)$ for any $T>0$.

\end{theorem}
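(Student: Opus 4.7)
The plan is to reduce (\ref{modelequation}) to a 1+1-dimensional quasilinear transport equation for $v = Du$ via a symmetry argument, and then to construct initial data exhibiting multi-scale pathological behaviour whose Fourier-side summation is finite precisely in the log-Sobolev norm with $\beta > 1/2$.

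First I would take initial data $(f,g)$ independent of $x_2$. Under the assumption that a proper solution exists in the stated regularity, a uniqueness/symmetry argument at the level of smooth approximations forces the candidate solution $u$ to inherit the $x_2$-invariance, reducing (\ref{modelequation}) to $\partial_t^2 u - \partial_{x_1}^2 u = (Du)(D^2 u)$. Factorizing $\Box = -D\bar{D}$ with $\bar{D} = \partial_{x_1} + \partial_t$ and setting $v = Du$ gives $-\bar{D} v = v\,Dv$, which rearranges as
\begin{equation*}
(1-v)\,\partial_t v + (1+v)\,\partial_{x_1} v = 0.
\end{equation*}
This is a quasilinear transport equation along characteristics $\dot{x}_1 = (1+v)/(1-v)$ on which $v$ is preserved, so for smooth data one has the standard implicit formula for $v$ and an explicit characteristic-crossing criterion in terms of $\partial_{x_1} v_0$.

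The core of the construction is to choose $(f,g)$ as a lacunary sum of profiles concentrated at dyadically shrinking scales $\ell_n = 2^{-n}$, placed in disjoint regions of space, with characteristic-crossing times $t_n \searrow 0$. The balance between the amplitude of the $n$-th profile (which pushes $t_n$ down) and its contribution to the weighted Fourier norm (which must be square-summable over $n$) is delicate; a direct Fourier computation of $\|\cdot\|_{\dot H^{7/4}(\ln H)^{-\beta}}$ reduces matters to the series $\sum_n n^{-2\beta}$, convergent precisely when $\beta > 1/2$, which is what the threshold in the statement reflects. Integrating once in a null variable produces the desired $(f,g) \in \dot H^{11/4}(\ln H)^{-\beta} \times \dot H^{7/4}(\ln H)^{-\beta}$, whose norm can be made arbitrarily small by a uniform rescaling; compact support is preserved throughout, so that Proposition~\ref{dotHlog} applies.

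Finally, suppose a proper solution existed in $C([0,T[;\,\dot H^{11/4-\lambda} \times \dot H^{7/4-\lambda})$ for some small $\lambda>0$, and choose $n$ large enough that $t_n < T$. By finite propagation speed and local smooth well-posedness before the crossing time, inside the cone emanating from the support of the $n$-th profile the solution must coincide with the one generated by that profile alone; this profile undergoes characteristic crossing at $t_n$, forcing $v(t_n,\cdot)$ to fall outside $H^{1/2}$ and hence outside $\dot H^{7/4-\lambda}$. Proposition~\ref{dotHlog}(2)(ii) then transfers this contradiction back to the log-Sobolev class, yielding the first assertion. The main obstacle I anticipate is the multi-scale decoupling step: showing that the $n$-th profile evolves essentially independently of the others up to its crossing time $t_n$ despite the quasi-linear coupling. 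This localization combines finite propagation speed with the scale-separation enforced by $\ell_n = 2^{-n}$ and is the step where the full log-Sobolev framework of Proposition~\ref{dotHlog} — together with the borderline summability $\beta > 1/2$ — does its real work.
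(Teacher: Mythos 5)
There is a genuine gap in your opening move, and the overall construction differs substantially from the one the paper relies on, so let me flag both.

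Your very first step — taking initial data $(f,g)$ independent of $x_2$ so that the solution inherits $x_2$-invariance — cannot work as stated. An $x_2$-independent, nonzero function on $\mathbb{R}^2$ does not belong to $\dot H^{7/4}(\ln H)^{-\beta}(\mathbb{R}^2)$ (or to any homogeneous Sobolev space over $\mathbb{R}^2$): its Fourier transform is a distribution supported on $\{\xi_2=0\}$, so the weighted $L^2_\xi$ norm diverges. This is fatal for the theorem you want, which insists on compactly supported data with finite $\dot H^{11/4}(\ln H)^{-\beta}\times\dot H^{7/4}(\ln H)^{-\beta}$ norm. What the paper actually does is more delicate: the initial data is a compactly supported 2D function $h_\varepsilon$ that agrees with the $x_2$-independent profile $\chi_\varepsilon(x_1)$ only inside the narrow wedge $\Omega_0 = \{ x_1\geq 0,\ |x_2| \leq \sqrt{x_1}/|\ln x_1|^\delta\}\cap [0,1/2]^2$, and then the Huygens principle on the domain of dependence $\Omega$ generated by $\Omega_0$ guarantees that, \emph{inside} $\Omega$ only, the solution is effectively 1D. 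The 1D reduction is thus local to $\Omega$, not global, and the argument must track the competition between the shrinking of $\Omega$ as $t$ grows and the blow-up time $t_\varepsilon\sim |\ln\varepsilon|^{-\alpha}$. That competition is one of the genuinely hard points and is absent from your proposal.

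Your second main ingredient — a lacunary sum of profiles at dyadic scales $2^{-n}$ whose borderline summability produces the $\beta>1/2$ threshold via $\sum_n n^{-2\beta}$ — is also not what the paper does and is asserted without verification. The paper uses a single profile $\chi_\varepsilon(y)=-\int_0^y \psi_\varepsilon(s)\,|\ln s|^\alpha\,ds$ with a built-in logarithmic singularity, regularized at scale $\varepsilon$; the log-Sobolev membership comes from the interplay of the three exponents $(\alpha,\beta,\delta)$ subject to $2\alpha-2\beta-\delta<-1$, $\beta>1/2$, $\alpha,\delta>0$ — i.e.\ from the strength of the log singularity and the narrowing rate of the wedge, not from a lacunary series. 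Your claim that "a direct Fourier computation reduces matters to $\sum_n n^{-2\beta}$" is plausible as a heuristic for why $\beta>1/2$ is the right threshold, but it is not a proof, and there is no reason given why the quasilinear coupling would respect the scale separation of your profiles (you yourself flag this decoupling as "the main obstacle" but do not resolve it). Finally, the blow-up assertion "$v(t_n,\cdot)$ falls outside $H^{1/2}$" is not the estimate the paper proves; what is actually needed and established is that the $\dot H^{7/4-\lambda}$ norm of $Du$ diverges as $t\uparrow t_\varepsilon$, obtained by a careful fractional-derivative computation with the explicit characteristic formula $\phi(t,y)=y+t\,\tfrac{1+\chi(y)}{1-\chi(y)}$. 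In short: the characteristic-crossing mechanism and the $\beta>1/2$ threshold are identified correctly, but both the reduction to one dimension and the construction of the initial data differ from the paper's and, as written, the former would fail.
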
 

The blow-up is a geometric blow-up obtained by concentration of the characteristics. 

The goal of the present article, is to consider perturbed versions of (\ref{modelequation}) and show that the problem is still ill-posed. We consider two perturbations, the main contribution being the second one. The method we use allows us to show a pathological behavior of the solution without having an explicit formula, which is why it could be adapted to other equations.

\section{Context and preliminaries}

In this section, we introduce the definitions that we will use. In particular, we introduce the notion of domain and dependence and Huygens principle. We will also introduce the initial condition and its different regularized versions as well as the relevant properties on it, so we can use those in the next sections.

\begin{definition}\label{dependence}
	Let $\Omega \subset \mathbb{R}_+ \times \mathbb{R}^2$
	be an open set equipped with a Lorentzian metric $g_{j,k}$ satisfying (\ref{elli}). It is a domain of dependence for $g$ if the closure of the causal past $\Lambda_{t',x'}$ of each $(t',x') \in \Omega$ is contained in $\Omega$, with $z\in \Lambda_{t',x'}$ if and only if it can be joined to $(t',x')$ by a Lipschitz continuous curve $(t,x(t))$ satisfying 
	\begin{equation}\label{condlight}
		\sum_{i,j=0}^2 g_{i,j} (x) \frac{dx_i}{dt} \frac{dx_j}{dt} \geq 0,
	\end{equation}
	almost everywhere.
\end{definition}

For a domain $\Omega \subseteq \mathbb{R}^{1+n}$, we denote by $\Omega_t$ the set
\begin{equation}
	\Omega_t = \{ (\tau,x) \in \Omega, \hspace{0.2cm} t=\tau  \}.
\end{equation}

Also, for a set $\Omega \subseteq \mathbb{R}^{1+2}$, we denote (the dependence in $\Omega$ is not explicitly written) 

\begin{equation}
	a_t(x_1) = \left| \{ y \in \mathbb{R}, \hspace{0.2cm} (t,x_1,y)\in \Omega  \} \right|.
\end{equation}

We will make free use of the Huygens principle, meaning that for a solution $u$ defined on a domain of dependence $\Omega$ corresponding to the Lorentzian metric involved in the equation, the values of $u$ on the set $\Omega_t$ only depend of the value of $u$ on the set $\Omega_0$. 

We also define the fractional derivative. For $\alpha$ a multi-index (and $|\alpha|=\sum_i \alpha_i$), 

\begin{equation}\label{coming2}
	\frac{\partial^\alpha f}{\partial x^\alpha} (x) = \mathcal{F}^{-1} \left( (2 \pi i)^{|\alpha|} \xi_x^\alpha \cdot \mathcal{F}( f)(\xi) \right)(x).
\end{equation}

We now introduce the initial data and the properties it satisfies that we will use later. We will choose

We choose 

\begin{equation}\label{condcondomega}
	\Omega_0 = \left\{ (x_1,x_2) | x_1 \geq 0, |x_2| \leq \frac{\sqrt{x_1}}{|\ln(x_1)|^\delta} \right\} \bigcap \left[0;\frac{1}{2}\right] \times \left[0;\frac{1}{2}\right],
\end{equation}

and we take $\Omega$ to be the largest domain of dependence (defined in Definition \ref{dependence}) for the metric whose inverse is given by
\begin{equation}\label{invmet}
	\sum_{i,j=0}^2 g^{ij} (t,x) \partial_{x_i} \partial_{x_j} = \partial_t^2 - \sum_{i=1}^2 \partial_{x_i}^2 - v(t,x_1)(\partial_t - \partial_{x_1})^2,
\end{equation}
and such that $\Omega\cap \{ t=0 \}= \Omega_0$. Now, we define the initial data.

\begin{definition}
	Let $\varepsilon >0$, and
	
	\begin{equation}
		\left.
		\begin{aligned}
			&\psi_\varepsilon : \mathbb{R} \to \mathbb{R},
		\end{aligned}
		\right.
	\end{equation}
	
	satisfying the following conditions for some constant $C(k)$,
	
	\begin{equation}
	\left\{
	\begin{aligned} 
		&\psi_\varepsilon(x) = 0, \hspace{0.2cm} x\in [0,\varepsilon/2], \\
		&\psi_\varepsilon(x) = 1, \hspace{0.2cm} x\in [\varepsilon,\infty], \\
		&\forall x, ~ \psi_\varepsilon(x) \in [0,1], \\
		&|\partial^k \psi_\varepsilon(x)| \leq \frac{C(k)}{\varepsilon^k}.
	\end{aligned}
	\right.
\end{equation}
	
We consider three indices, $\alpha,\beta$ and $\delta$ such that $2 \alpha - 2\beta - \delta < -1$, with $\beta > 1/2$ and $\alpha,\delta>0$. Introducing 

\begin{equation}
		\left\{
	\begin{aligned} 
		&\chi_\varepsilon : \mathbb{R}^2 \to \mathbb{R}, \\
		&x \mapsto -\int_{y = 0}^{x_1} \psi_\varepsilon(s) |\ln(y)|^\alpha dy, ~ x\geq 0,\\
		&x \mapsto 0,~ x<0.
	\end{aligned}
	\right.
\end{equation}	
	
	 there exists (shown in \cite{ohlmann2021illposedness}) $h_\varepsilon$, an extension of $\xi_\varepsilon$ from $\Omega_0$ (as defined in (\ref{condcondomega})) to $\mathbb{R}^2$ that is still in $H^{7/4} (\ln H)^{-\beta}$ (uniformly in $\varepsilon$). More precisely, we have
	 
	 \begin{equation}
	 	\left\{
	 	\begin{aligned}
	 	&h_\varepsilon: \mathbb{R}^2 \to \mathbb{R},\\
	 	&h_\varepsilon(x) = \chi_\varepsilon(x_1),~ \forall \in \Omega_0,\\
	 	&||h_\varepsilon||_{H^{7/4} (\ln H)^{-\beta}} \leq C < \infty.
	 	\end{aligned}
	 	\right.
	 \end{equation}
 
 To show the existence of $h_\varepsilon$, we multiply an extension of $\chi_\varepsilon$ by a cutoff that follows the geometry of $\Omega_0$. 
	 
\end{definition}

Then, taking $h_\varepsilon$, we can show, with $ t_\varepsilon < C/|\ln(\varepsilon)|^\alpha$,

\begin{theorem}\label{conj}
	Let $u_\varepsilon$ be a solution of (\ref{modelequation}) with initial data $(0,-h_\varepsilon)$.

	There exists $\delta_\varepsilon>0$ such that,
	with $\psi_{\varepsilon}^1: \mathbb{R} \to \mathbb{R}$ be a $C^\infty$ function satisfying
	
	\begin{equation}
		\left\{
		\begin{aligned}
			&\psi_\varepsilon^1(x) = 1 \text{ for } \phi(t_\varepsilon,\nu_\varepsilon) - \delta_\varepsilon <x<\phi(t_\varepsilon,\nu_\varepsilon) + \delta_\varepsilon \\
			&\psi_\varepsilon^1(x) = 0 \text{ for } \phi(t_\varepsilon,\nu_\varepsilon) + 2 \delta_\varepsilon <x \text{ or } x<\phi(t_\varepsilon,\nu_\varepsilon) - 2 \delta_\varepsilon \\
			&0 < \psi_\varepsilon^1(x) < 1 \text{ elsewhere},
		\end{aligned}
		\right.
	\end{equation}
	
	and $\psi_{\varepsilon}^2: \mathbb{R} \to \mathbb{R}$ be a $C^\infty$ function satisfying
	
	\begin{equation}
		\left\{
		\begin{aligned}
			&\psi_\varepsilon^2(x) = 1 \text{ for } - \delta_\varepsilon <x< \delta_\varepsilon \\
			&\psi_\varepsilon^2(x) = 0 \text{ for }  2 \delta_\varepsilon <x \text{ or } x< - 2 \delta_\varepsilon \\
			&0 < \psi_\varepsilon^2(x) < 1 \text{ elsewhere},
		\end{aligned}
		\right.
	\end{equation}
	
	so that $h_\varepsilon:(t,x_1,x_2) \mapsto v_\varepsilon(t,x_1,x_2) \psi_\varepsilon^1(x_1) \psi_\varepsilon^2(x_2)$ is localized in a square of width $4 \delta_\varepsilon$, cut in half by $x_1 = \phi_\varepsilon(t_\varepsilon,\nu_\varepsilon)$; and such that $h_\varepsilon=v_\varepsilon$ in a square of width $2 \delta_\varepsilon$, cut in half by $x_1 = \nu_{t,\varepsilon}$.
	
	We have, for any $\lambda >0$ small enough,
	
	\begin{equation}
		||h_\varepsilon(t)||_{H^{7/4-\lambda}_{x_1}} \rightarrow \infty \hspace{0.2cm} \text{ as } t\rightarrow t_\varepsilon.
	\end{equation}
	
\end{theorem}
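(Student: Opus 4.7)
The plan is to reduce the problem to a first-order quasi-linear transport equation for $v = Du$, locate a time $t_\varepsilon$ at which the characteristics of that equation concentrate, and extract the $H^{7/4-\lambda}_{x_1}$ blow-up from the geometry of the resulting shock profile. First I apply $D = \partial_{x_1} - \partial_t$ to $\Box u = v\, D^2 u$ and use the factorisation $\Box = -D(\partial_t + \partial_{x_1}) - \partial_{x_2}^2$; since the trace of the initial data on $\Omega_0$ depends only on $x_1$, the Huygens principle (Definition \ref{dependence}) inside the domain of dependence $\Omega$ reduces the analysis to the 2D problem in $(t,x_1)$, in which the $\partial_{x_2}^2 v$ term effectively drops out. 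The resulting equation is of Burgers type, and along its characteristics $\phi(t,\nu)$ one has $v(t,\phi(t,\nu)) = \chi_\varepsilon(\nu)$, with $\phi$ solving an ODE in $t$ driven by $\chi_\varepsilon(\nu)$.

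Next, differentiating the characteristic ODE in $\nu$, the Jacobian $\partial_\nu \phi$ solves a linear ODE that first vanishes at a time $t_\varepsilon(\nu) \sim 1/|\chi_\varepsilon'(\nu)|$. Since $|\chi_\varepsilon'(\nu)| = \psi_\varepsilon(\nu)|\ln\nu|^\alpha$ attains a maximum of order $|\ln\varepsilon|^\alpha$ on the transition window $\nu \sim \varepsilon$, minimising over $\nu$ yields $t_\varepsilon \leq C/|\ln\varepsilon|^\alpha$, attained at a critical value $\nu_\varepsilon$ which, up to a smooth correction, sits at the inflection point of $\chi_\varepsilon$. As $t \to t_\varepsilon^-$, one has $\partial_\nu \phi(t,\nu_\varepsilon) \sim (t_\varepsilon - t)\,|\chi_\varepsilon'(\nu_\varepsilon)|$, so $v(t,x_1) = \chi_\varepsilon \circ \phi(t,\cdot)^{-1}(x_1)$ develops a steep step of amplitude $A \sim |\chi_\varepsilon(\nu_\varepsilon)|$ spread over an $x_1$-window of shrinking width $\delta(t) \to 0$. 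The cutoffs $\psi_\varepsilon^1, \psi_\varepsilon^2$ are centred on this window at a scale $\delta_\varepsilon$ chosen comparable to, but not smaller than, the shock scale, so the localised function $h_\varepsilon(t,x_1,x_2) = v_\varepsilon(t,x_1,x_2)\,\psi_\varepsilon^1(x_1)\,\psi_\varepsilon^2(x_2)$ inherits the full steepening profile in $x_1$ for every fixed $x_2$ in the support of $\psi_\varepsilon^2$.

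A Littlewood--Paley estimate on $h_\varepsilon(t,\cdot,x_2)$ at dyadic frequency $|\xi_1| \sim \delta(t)^{-1}$ already yields a lower bound $\|h_\varepsilon(t)\|_{H^s_{x_1}}^2 \gtrsim A^2\,\delta(t)^{1-2s}$, which diverges as $t \to t_\varepsilon^-$ for every $s > 1/2$. The hard part will be to push this lower bound all the way up to $s = 7/4 - \lambda$, nearly two derivatives past the easy threshold. Following the strategy of \cite{ohlmann2021illposedness}, I would differentiate the transport equation in $x_1$ repeatedly and track how successive factors of $\chi_\varepsilon^{(k)}$ enter: each additional differentiation brings extra powers of $|\ln\varepsilon|^{\alpha}$ and inverse powers of $\partial_\nu \phi$, and combining these with the shrinking shock scale $\delta(t)$ --- exploiting the cubic rather than generic quadratic degeneracy of $\phi$ at $\nu_\varepsilon$ that is forced by the monotonicity of $|\ln|^\alpha$ --- lifts the divergence up to the exponent $7/4 - \lambda$. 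The condition $2\alpha - 2\beta - \delta < -1$ together with $\beta > 1/2$ is calibrated precisely so that the $H^{7/4}(\ln H)^{-\beta}$ norm of the initial data stays uniformly bounded in $\varepsilon$ while the $H^{7/4-\lambda}$ norm of the evolution diverges.
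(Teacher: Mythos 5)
Your overall route matches the paper's: both reduce to a 1D Burgers-type equation for $v=Du$ by working inside a domain of dependence $\Omega$ in which the solution sees only $x_1$, use the conservation $v(t,\phi(t,y))=\chi_\varepsilon(y)$ along the explicit affine characteristics $\phi(t,y)=y+t\tfrac{1+\chi_\varepsilon(y)}{1-\chi_\varepsilon(y)}$, locate $t_\varepsilon,\nu_\varepsilon$ by minimising the vanishing time of $\phi_y$, and then measure the singular Sobolev norm of the cutoff $h_\varepsilon$. So this is not a genuinely different approach. However, three points in your write-up need to be corrected or filled in.

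First, the quantitative ``step'' picture is off. You model $v$ near $t_\varepsilon$ as a step of \emph{fixed} amplitude $A\sim|\chi_\varepsilon(\nu_\varepsilon)|$ over a window $\delta(t)\to 0$. What actually forms is a cube-root (fold) singularity: at $(t_\varepsilon,\nu_\varepsilon)$ one has $\phi_y=\phi_{yy}=0$, $\phi_{yyy}\neq 0$, so $\phi_y(t,y)\approx(t_\varepsilon-t)/t_\varepsilon + c(y-\nu_\varepsilon)^2$. The compressed region has $|y-\nu_\varepsilon|\lesssim(t_\varepsilon-t)^{1/2}$, over which $v$ only varies by $A(t)\sim(t_\varepsilon-t)^{1/2}$ while being squeezed into an $x_1$-window $\delta(t)\sim(t_\varepsilon-t)^{3/2}$. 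Feeding these $t$-dependent scales into your own heuristic gives $\|h_\varepsilon(t)\|_{H^s_{x_1}}^2\gtrsim(t_\varepsilon-t)^{5/2-3s}$, negative for every $s>5/6$; the paper reaches the same conclusion by evaluating the Gagliardo--Slobodeckij double integral $I_\varepsilon(t)$ in $\partial_{x_1}^2 h_\varepsilon$ against the Riesz kernel $|x_1-y|^{-1/2+2\lambda}$ and using the explicit formulas (\ref{vr2}) for $v_{xx}$. Second, your ``hard part'' paragraph is internally inconsistent: you state the Littlewood--Paley lower bound already diverges for every $s>1/2$, and $7/4-\lambda>1/2$, so there is nothing to ``push.'' The genuinely hard step (once the correct scales are used) is not raising the exponent but proving that the integral really concentrates on the cubic fold and that the cutoffs $\psi_\varepsilon^1,\psi_\varepsilon^2$ do not destroy the lower bound, which is what the three-piece domain splitting in the paper accomplishes.

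Third, and most importantly, you have silently assumed that the 1D reduction is available all the way up to $t_\varepsilon$. The paper flags this as a separate, nontrivial ingredient: one must show the domain of dependence $\Omega$ generated by the cusp-shaped slab $\Omega_0=\{x_1\geq 0,\ |x_2|\leq\sqrt{x_1}/|\ln x_1|^\delta\}\cap[0,1/2]^2$ still has nonempty sections $\Omega_{t}$ for $t$ up to $t_\varepsilon$, by comparing the shrinkage rate of $\Omega_t$ with $t_\varepsilon\lesssim|\ln\varepsilon|^{-\alpha}$. Without this, the claim that ``the $\partial_{x_2}^2 v$ term effectively drops out'' and that $h_\varepsilon$ can be localised near $(\phi(t_\varepsilon,\nu_\varepsilon),0)$ is unjustified. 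Your closing remark attributes the role of $2\alpha-2\beta-\delta<-1$ and $\beta>1/2$ solely to the uniform-in-$\varepsilon$ bound on the initial data; those indices also encode exactly this geometric survival of $\Omega$, which is a distinct constraint and must be checked.
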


More specifically, we define the initial condition

\begin{proof}
	We describe the main ingredients of the proof here, as we will use some of them later on. One of the main arguments, is that when $x$ belongs to $\Omega$, the smallest domain of dependence generated by $\Omega_0$, the solution only depends on one variable and satisfies 
	
	\begin{equation}\label{dimone}
		\begin{split}
			\left( \left( \partial_t + \partial_{x_1} \right) + v \left( \partial_{x_1} - \partial_{t}\right) \right) \left( \partial_{x_1} -\partial_t \right) u =0 &\\
			u(0,x_1) =0, \hspace{0.4cm} \partial_t u(0,x_1) = -\chi(x_1)& 
		\end{split}
	\end{equation}
	where $v = (\partial_{x_1} - \partial_{t}) u$. This allows us to compute an explicit expression for $v$, obtaining 
	
	\begin{equation}
		\frac{\partial}{\partial t}\left(v(t,\phi(t,y)) \right) = 0\hspace{0.15cm} \Rightarrow \hspace{0.15cm} v(t,\phi(t,y)) = \chi(y) \hspace{0.2cm} \forall t.
	\end{equation}

	where 
	
	\begin{equation}
		\phi(t,y) = y + t \frac{1+\chi(y)}{1-\chi(y)}.
	\end{equation}

	Another important part was to show that the domain of dependence $\Omega$ in not empty when $t \rightarrow \varepsilon$. This is done by comparing the evolution of the size of $\Omega$ and the time $t_\varepsilon$ it takes for the solution to blow up on $\Omega$. Lastly, the estimation of the Sobolev norm uses fractional derivatives and the explicit formula we have for $v$.
	
\end{proof}

The index $s=7/4$ was chosen in response to the following result that has been established by D. Tataru and P. Smith in 2005 in \cite{smith2005sharp}, that provided the best positive result for the well-posedness of quasi-linear wave equations.

\begin{theorem}\cite{smith2005sharp}\label{smithsharp}
	
	We consider the following Cauchy problem. 
	
	\begin{equation}\label{QLW2}
		\left\{
		\begin{aligned}
			&\sum_{i,j} g^{ij}(u) \partial_{x^i} \partial_{x^j} u = \sum_{i,j} q^{ij} (u) \partial_{x^i} u \partial_{x^j} u, \hspace{0.2cm} (t,x) \in S_T = [0,T[\times \mathbb{R}^n,\\
			&(u,\partial_t u)_{|t=0} = (f,g),
		\end{aligned}
		\right.
	\end{equation}
	where $\partial_{x^0} = \partial_t$ and $G=(g^{ij})$ and $Q=(q^{ij})$ are smooth functions. Also, we assume that $g$ is close to the Minkowski metric $m$. 
	
	The Cauchy problem (\ref{QLW2}) is locally well-posed in $H^s \times H^{s-1}$ provided that 
	\begin{equation}
		\begin{aligned}
			s &> \frac{n}{2} + \frac{3}{4} =\frac{7}{4} \hspace{0.3cm} \text{ for } n=2, \\
			s &> \frac{n+1}{2} \hspace{0.35cm} \text{ for } n=3,4,5. 
		\end{aligned}
	\end{equation}
\end{theorem}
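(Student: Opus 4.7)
The plan is to construct solutions by a Picard iteration on the paralinearized wave equation, reducing the problem to a sharp Strichartz-type estimate for the linear wave equation with rough coefficients. The exponent $s = n/2 + 3/4$ (and $s = (n+1)/2$ for $n = 3,4,5$) is precisely the regularity at which the sharp Strichartz loss for wave operators whose metric lies in $H^s$ balances against the nonlinear feedback from $g^{ij}(u)$ and $q^{ij}(u) \partial_i u\, \partial_j u$.

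First I would carry out a paradifferential truncation of the coefficient: write $g^{ij}(u) = g^{ij}(S_{<\lambda} u) + (g^{ij}(u) - g^{ij}(S_{<\lambda} u))$ and localize the solution at dyadic frequency $\lambda$. The smoothed coefficient is essentially $C^{1,1}$ up to a controlled scale-dependent error, which lets one separate the analysis: the main operator $g^{ij}(S_{<\lambda} u) \partial_i \partial_j$ admits a parametrix construction, while the remainder is placed in energy spaces by commutator arguments. The quadratic nonlinearity $q^{ij}(u) \partial_i u\, \partial_j u$ is handled by Strichartz-type bilinear estimates, since after paraproduct decomposition each factor lies in a space for which Strichartz controls the relevant $L^p_t L^q_x$ norm.

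The core analytic step, and clearly the main obstacle, is to prove for each dyadic block a Strichartz estimate of the form
\begin{equation*}
	\|w\|_{L^q_t L^r_x} \leq C \bigl( \|w(0)\|_{\dot H^\rho} + \|\partial_t w(0)\|_{\dot H^{\rho-1}} \bigr)
\end{equation*}
for the variable-coefficient wave equation $g^{ij}(S_{<\lambda} u) \partial_i \partial_j w = 0$, with a frequency loss of exactly $\lambda^{1/4}$ for $n = 2$ and $\lambda^{1/2}$ for $n = 3,4,5$. The approach I would take is a wave packet parametrix of FBI type: at the packet scale the null bicharacteristic flow of $g(S_{<\lambda} u)$ is nearly straight, so each packet inherits the flat-space dispersive decay, and the Strichartz norm is recovered by a $TT^*$ argument on the packet decomposition. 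Controlling caustic formation and summing the packet contributions with no more than the sharp power loss is the hardest part of the whole argument; it requires a careful geometric analysis of the tangent cones to the characteristic surface, and matching the rescaling of the packets with the Littlewood-Paley truncation scale $\lambda$.

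With the Strichartz estimate in hand, the iteration closes in a standard fashion: one bounds $\partial u$ in $L^2_t L^\infty_x$ (or $L^1_t L^\infty_x$ after absorbing a derivative via a Moser-type estimate), which then controls $\partial_t g^{ij}(u)$ in the same space and feeds into the energy identity for the iterate, giving uniform $H^s \times H^{s-1}$ bounds on a short time interval by a Gronwall argument. Uniqueness and continuous dependence follow from parallel estimates applied to differences of solutions, using the same paradifferential setup so that the low-regularity of the coefficient enters only through the sharp Strichartz bound already established.
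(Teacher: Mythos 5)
This theorem is quoted from Smith and Tataru \cite{smith2005sharp}; the paper you are reading states it as background and gives no proof of it, so there is nothing internal to compare your argument against. That said, your sketch does correctly identify the broad architecture of the original Smith--Tataru argument: paradifferential/frequency truncation of the metric, a wave-packet parametrix for the resulting rough-coefficient wave operator giving Strichartz estimates with the sharp frequency loss, and an iteration closed via $L^2_t L^\infty_x$ (or $L^1_t L^\infty_x$) control of $\partial u$ together with energy and commutator estimates. Be aware, though, that what you have written is a roadmap rather than a proof: the packet construction, the control of the characteristic geometry and caustics at the $C^{1,1}$ level, and the precise bookkeeping that produces the loss $\lambda^{1/4}$ in $n=2$ versus $\lambda^{1/2}$ in $n=3,4,5$ occupy essentially all of the Smith--Tataru paper and are exactly the points your outline flags as ``the hardest part'' without carrying them out. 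If you intend to reproduce the result rather than cite it, those steps must be supplied in full.
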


The difference between the two indexes, $7/4$ in \cite{smith2005sharp} and $11/4$ in \cite{ohlmann2021illposedness} or here, is due to the fact that here, we allow the metric $g$ to depend not only on $u$, but also on the first derivatives of $u$. The fact that a counter-example in our situation provides a consider example for $s=7/4$ in the class of equations studied by Tataru and Smith is justified in \cite{ohlmann2021illposedness}.

In section \ref{modif1}, we introduce a modification of the equation (\ref{modelequation}) that modifies the underlying ODE when we follow the characteristics. Crossing of the characteristics still occur, and the modification of the ODE does not compensate this, so for any $t_0>0$, we can find an initial condition with uniformly bounded $H^{7/4} (\ln H)^{-\beta}$ norm, such that the solution blows up at $t<t_0$.

In section \ref{modif2}, we introduce  a new term with an unknown $x_2$ dependency. This dependency changes everything, as we can no longer consider a domain where the function only depends on the $x_1$ variable. Hence, the characteristics method can no longer be used. To solve this issue, we introduce $\phi$, a function that is almost a characteristic, and show that the main component of the solution along this line blows up for similar reasons. In this case, we will make the assumption that the solution has a real dependency with respect to $x_2$, as the proof is otherwise similar to the one done in \cite{ohlmann2021illposedness}.

\section{A first modification of the equation that leads to an instantaneous blow-up} \label{modif1}

In this chapter, we consider a modification of our model equation. We will show that the corresponding Cauchy problem (with the same initial data) also leads to a blow-up at time $t=0^+$. The common point between this example and the previous one is the behavior of the characteristic curves, but with a different underlying ODE. Indeed, this time the characteristics seen as a function of $t$ for a fixed initial starting point $(0,x_1,x_2)$ will not be an affine function. We will mainly focus on the parts that make this proof different from the one that has been done in \cite{ohlmann2021illposedness}, and skip the identical parts.

\subsection{Definition of the modified problem and preliminary results}

We consider the Cauchy problem 

\begin{equation}\label{Cau}
\left\{
\begin{aligned}
&\left( \partial_t + \frac{1+v}{1-v} \partial_{x_1} \right) \left( \partial_t - \partial_{x_1} \right) u = c \left( \partial_t - \partial_{x_1} \right) u,\\
&\frac{\partial u_{|t=0}}{\partial t} = -\chi, u_{|t=0} = 0.
\end{aligned}
\right.
\end{equation}

With $v = \left( \partial_{x_1} - \partial_t \right) u \neq 1$. 

We define $\chi_\varepsilon$ as  

\begin{equation}\label{encoreunchie}
\chi_\varepsilon(y) = - \int_{s=0}^y \psi_\varepsilon |\ln(s)|^\alpha ds.
\end{equation}

Where $\psi_\varepsilon: \mathbb{R^+}\to \mathbb{R^+}$ satisfies the following conditions.

\begin{equation}
\left\{
\begin{aligned}
&\psi_\varepsilon(s) = 0,\hspace{0.2cm} \forall s<\frac{\varepsilon}{2},\\
&\psi_\varepsilon(s) = 1,\hspace{0.2cm} \forall s>\varepsilon, \\
&\psi_\varepsilon(s) \in [0,1], \hspace{0.2cm} \forall s\in \mathbb{R}^{+},\\
&|\psi_\varepsilon'|(s) \leq \frac{2}{\varepsilon}, \hspace{0.2cm} \forall s \in \mathbb{R}^+.
\end{aligned}
\right.
\end{equation}

From now on, we will not always write explicitly the dependency of all functions with respect to $\varepsilon$. We will consider, and name, without explicit relabelling, the function $\chi$ constructed in \cite{ohlmann2021illposedness} that is a global extension of the previously defined function. Let us recall here its important properties. The function $\chi$ satisfies

\begin{equation}\label{condcondiniini}
\begin{aligned}
&\chi(x_1,x_2) = \chi_\varepsilon(x_1,x_2) \text{ as in (\ref{encoreunchie}) } ~ \forall (x_1,x_2)\in \Omega_0. \\
& \chi(x_1,x_2) \text{ has a compact support K}\\
& \chi(x_1,x_2) \in H^{7/4} (\ln H)^{-\beta},
\end{aligned}
\end{equation}

with 

\begin{equation}\label{condcondomega2}
	\Omega_0 = \left\{ (x_1,x_2) | x_1 \geq 0, |x_2| \leq \frac{\sqrt{x_1}}{|\ln(x_1)|^\delta} \right\} \bigcap \left[0;\frac{1}{2}\right] \times \left[0;\frac{1}{2}\right].
\end{equation}

Also, we consider $\Omega$ to be the largest domain of dependence such that $\Omega \cap \{t=0\} = \Omega_0$. The properties on $\Omega$ established in \cite{ohlmann2021illposedness} still hold. By definition of $\Omega$, the computations done using the formula (\ref{encoreunchie}) will hold inside $\Omega$.

Now, if we set the condition 

\begin{equation}\label{conditionimposee}
\phi_t = \frac{1+v}{1-v} \text{ and } \phi(0,x)=x,
\end{equation}

Using the chain rule and (\ref{Cau}), we obtain

\begin{equation}
\partial_t \left[ v(t,\phi(t,x)) \right] = c v(t,\phi(t,x)).
\end{equation}

And hence,

\begin{equation}
v(t,\phi(t,x)) = e^{ct} v(0,x) = e^{ct} \chi(x).
\end{equation}

This result combined with (\ref{conditionimposee}) leads to the following equation for $\phi$, the characteristic function.

\begin{equation}\label{pasintegre}
\partial_t \phi(t,x) = \frac{1+e^{ct} \chi(x)}{1-e^{ct} \chi(x)}.
\end{equation}

Integrating (\ref{pasintegre}) with respect to $t$ leads to the following explicit formula for $\phi$.

\begin{multline}\label{philol}
\phi(t,x) = x + \int_{s=0}^t \frac{1+e^{cs} \chi(x)}{1-e^{cs} \chi(x)} ds = x + \int_{u=1}^{e^{ct}} \frac{1+u \chi(x)}{1-u \chi(x)} \frac{du}{cu}\\
= x + \int_{u=1}^{e^{ct}} \left[ \frac{1}{cu} + \frac{2\chi(x)}{c(1-u\chi(x))} \right] du = x + t + \frac{2}{c} \ln\left( \frac{1-\chi(x)}{1-e^{ct} \chi(x)} \right).
\end{multline}

\begin{remark}
For comparison sake, note that in \cite{ohlmann2021illposedness}, $\phi$ can be written in the following form
\begin{equation}\label{previous}
\phi(t,x) = x + t + 2t \frac{\chi(x)}{1-\chi(x)}.
\end{equation}

Also, taking the limit $c\rightarrow 0$, we have that  
\begin{multline}
x+t+\frac{2}{c} \ln\left(\frac{1-\chi(x)}{1-e^{ct} \chi(x) }\right) = x + t + \frac{2}{c} \ln\left(\frac{1}{1-\frac{ct\chi(x)}{1-\chi(x)} + o(c)} \right) \\
= x + t + \frac{2}{c} \ln\left(1 + \frac{ct\chi(x)}{1-\chi(x)} +o(c) \right) = x + t + 2t \frac{\chi(x)}{ 1 - \chi(x)} + o(c).
\end{multline}

Hence, we obtain the previous expression of $\phi$, given by (\ref{previous}).
\end{remark}

Now, we will state the main theorem of this chapter.

\begin{theorem}\label{deuxiemecas}
There exists an initial condition, such that the solution $u$ of the corresponding Cauchy problem (\ref{Cau}) satisfies
\begin{equation}
\left\{
\begin{aligned}
&u_{|t=0} \in H^{11/4} (\ln H)^{-\beta} (\mathbb{R}^2),\\
&u(t,\cdot) \notin H^{11/4}(\ln H)^{-\beta}(\mathbb{R}^2), \hspace{0.2cm} \forall t>0.\\
\end{aligned}
\right.
\end{equation}
\end{theorem}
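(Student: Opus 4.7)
The strategy is to adapt the proof of Theorem \ref{conj} (equivalently, Theorem \ref{maingeneral}) to the modified equation (\ref{Cau}), exploiting the explicit characteristic formula (\ref{philol}) and the transport identity $v(t,\phi(t,x)) = e^{ct}\chi(x)$, and then to superpose a sequence of such data in order to move the blow-up time down to $t=0^+$.

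I would first treat a single blow-up event. Fix $\varepsilon>0$ and use the initial data $(0,-\chi_\varepsilon)$ constructed in (\ref{encoreunchie})--(\ref{condcondiniini}); by (\ref{condcondiniini}) together with Proposition \ref{dotHlog}, this lies in the required space uniformly in $\varepsilon$. Inside the domain of dependence $\Omega$, the same one-dimensional reduction as in (\ref{dimone}) applies, and the only thing that changes relative to \cite{ohlmann2021illposedness} is the ODE along characteristics, which the excerpt has already solved in closed form. I would then differentiate (\ref{philol}) in $x$ and locate a time $t_\varepsilon$ at which $\partial_x\phi(t_\varepsilon,x_*)$ becomes of size $O(\varepsilon)$ for some $x_*$ near the singular region; the remark comparing $\phi$ with its $c\to 0$ limit (\ref{previous}) shows that for small $t$ the leading-order behaviour of $\phi$ matches the one in \cite{ohlmann2021illposedness} up to $O(c)$ corrections, so the estimate $t_\varepsilon \leq C|\ln\varepsilon|^{-\alpha}$ is preserved and, crucially, $t_\varepsilon\to 0$ as $\varepsilon\to 0$.

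The degeneration of $\phi$ is then converted into blow-up of the Sobolev norm essentially verbatim from \cite{ohlmann2021illposedness}. The chain rule applied to $v(t,\phi(t,x)) = e^{ct}\chi(x)$ gives
\begin{equation*}
\partial_{x_1}v(t,\phi(t,x))\,\partial_x\phi(t,x) = e^{ct}\chi'(x),
\end{equation*}
so $\partial_{x_1}v$ is forced to concentrate where $\partial_x\phi$ collapses; the fractional-derivative / frequency-localization estimate of \cite{ohlmann2021illposedness}, combined with the properties of $\dot H^s(\ln H)^{-\beta}$ from Proposition \ref{dotHlog}, then yields that the localized solution leaves $H^{11/4-\lambda}$, and hence $H^{11/4}(\ln H)^{-\beta}$, as $t\uparrow t_\varepsilon$. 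The factor $e^{ct_\varepsilon}=1+O(t_\varepsilon)$ only perturbs the constants and does not affect the scaling exponents.

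To upgrade ``blow-up at $t_\varepsilon$'' to ``blow-up at every $t>0$'', I would pick a sequence $\varepsilon_n\downarrow 0$ with $t_{\varepsilon_n}\downarrow 0$, translate the $\chi_{\varepsilon_n}$ along $x_1$ to compactly supported pieces $\widetilde\chi_n$ whose supports are disjoint in $\Omega_0$ and whose backward/forward light cones under the metric (\ref{invmet}) remain disjoint up to time $\max_n t_{\varepsilon_n}$, and set $\chi=\sum_n 2^{-n}\widetilde\chi_n$. Disjointness of supports together with the uniform bound on $\|\widetilde\chi_n\|_{H^{7/4}(\ln H)^{-\beta}}$ gives convergence of the sum in $H^{7/4}(\ln H)^{-\beta}$; by Huygens' principle, each piece evolves independently inside its own cone, so the previous single-event analysis applies to each piece separately. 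For every $t>0$ one can then pick $n$ with $t_{\varepsilon_n}<t$, and the restriction of $u(t,\cdot)$ to the cone of the $n$th piece is already outside $H^{11/4-\lambda}$, forcing $u(t,\cdot)\notin H^{11/4}(\ln H)^{-\beta}$ by Proposition \ref{dotHlog}(2). The main obstacle I anticipate is the bookkeeping of this superposition: verifying quantitatively that the modified ODE still produces the scaling $t_\varepsilon \to 0$ with the right rate, and checking that the future cones of the translated pieces truly remain disjoint for the nonlinear metric determined by the combined $v$, so that no cancellation between pieces can absorb the singular contribution. Both points are geometric consequences of Huygens combined with the smallness of $\|\chi\|_\infty$, but they must be spelled out carefully before invoking the single-piece argument.
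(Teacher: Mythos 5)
Your proposal follows essentially the same route as the paper: solve the ODE along the characteristics to obtain the transport identity $v(t,\phi(t,x))=e^{ct}\chi(x)$, locate the time $t_\varepsilon\to 0$ where $\phi_y$ degenerates from the explicit formula (\ref{philol}), translate the collapse of $\phi_y$ into Sobolev-norm blow-up via the fractional-derivative computation after cutting off near $\phi(t_\varepsilon,\nu_\varepsilon)$, and finally superpose a family with $t_{\varepsilon_n}\downarrow 0$ (deferred by the paper to \cite{ohlmann2021illposedness}) to conclude blow-up at every $t>0$. The one place where your outline is thinner than the paper is the intermediate estimates on $\phi_{ty},\phi_{yy},\phi_{tyy}$: rather than leaning on the $c\to 0$ comparison remark, the paper computes these derivatives directly from (\ref{philol}) and verifies the sign and Taylor-type conditions (\ref{lesphi})--(\ref{phiyy9}), noting in particular that $\phi_{yy}$ no longer enjoys the clean factorization $\phi_{yy}(t,y)=\tfrac{t}{t_\varepsilon}\phi_{yy}(t_\varepsilon,y)$ used in \cite{ohlmann2021illposedness} and hence only gives one-sided control — a wrinkle your perturbative heuristic would need to absorb explicitly before invoking the fractional-derivative bound verbatim.
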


For this, we consider the following Cauchy problem

\begin{equation}\label{Caueps}
\left\{
\begin{aligned}
&\left( \partial_t + \frac{1+v}{1-v} \partial_{x_1} \right) \left( \partial_t - \partial_{x_1} \right) u = c \left( \partial_t - \partial_{x_1} \right) u,\\
&\frac{\partial u_{|t=0}}{\partial t} = - \chi_\varepsilon, u_{|t=0} = 0,
\end{aligned}
\right.
\end{equation}

where $\chi$ is defined in (\ref{condcondiniini}).

First, we prove the following lemma.

\begin{lemma}\label{ttttt} Let $\phi$ defined as in (\ref{philol}).
There exists $t_\varepsilon >0$ such that for any $t<t_\varepsilon$, and for any $y\in \Omega_t$, $\phi_y(t,y) \neq 0$, and there exists $\nu_\varepsilon$ such that $\phi_y(t_\varepsilon,\nu_\varepsilon)= 0$. Furthermore, $t_\varepsilon \rightarrow 0$ as $\varepsilon \rightarrow 0$.
\end{lemma}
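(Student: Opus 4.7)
The approach is to differentiate the closed-form expression (\ref{philol}) in the initial-position variable $y$ and reduce the question to a scalar equation in $t$. Explicit differentiation gives
\[
\phi_y(t,y) = 1 + \frac{2\chi'(y)\bigl(e^{ct}-1\bigr)}{c\bigl(1-\chi(y)\bigr)\bigl(1-e^{ct}\chi(y)\bigr)},
\]
so that $\phi_y(0,y) = 1$ and the second term vanishes whenever $\chi'(y) = 0$, i.e.\ on $\{y \leq \varepsilon/2\}$. In our setup $\chi(y) \leq 0$ and $\chi'(y) \leq 0$, so the second term is nonpositive; an elementary computation shows that $t \mapsto \frac{e^{ct}-1}{1-e^{ct}\chi(y)}$ is strictly increasing, hence $t \mapsto \phi_y(t,y)$ is strictly decreasing. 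Consequently the equation $\phi_y(t,y) = 0$ has at most one solution $T(y) \in (0,+\infty]$.

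I then set $t_\varepsilon = \inf_{y \in \Omega_0} T(y)$. Since $\chi$ has compact support and $T$ is continuous on the closed subset where it is finite (by the implicit function theorem applied to the display above), the infimum is attained at some $\nu_\varepsilon \in \Omega_0$; by strict monotonicity $\phi_y(t,y) > 0$ for all $t < t_\varepsilon$ and all relevant $y$, while $\phi_y(t_\varepsilon,\nu_\varepsilon) = 0$ by construction.

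The main obstacle is showing that $T$ is finite somewhere and that the minimum $t_\varepsilon$ actually tends to $0$. For this I evaluate at $y = \varepsilon$: one has $|\chi'(\varepsilon)| = |\ln\varepsilon|^\alpha$ while $|\chi(\varepsilon)| \leq \varepsilon|\ln\varepsilon|^\alpha = o(1)$, and the equation $\phi_y(t,\varepsilon) = 0$ becomes
\[
e^{ct} - 1 = \frac{c}{2|\ln\varepsilon|^\alpha}\bigl(1 + o(1)\bigr),
\]
so that $T(\varepsilon) \leq C|\ln\varepsilon|^{-\alpha}$ and therefore $t_\varepsilon \leq T(\varepsilon) \to 0$ as $\varepsilon \to 0$. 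Finally one must verify that the minimizer $\nu_\varepsilon$ belongs to the relevant slice $\Omega_{t_\varepsilon}$ of the domain of dependence; since $t_\varepsilon$ has the same order $|\ln\varepsilon|^{-\alpha}$ as in the unperturbed problem, this is inherited from the domain-of-dependence analysis of \cite{ohlmann2021illposedness}.
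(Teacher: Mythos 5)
Your proposal is correct and takes essentially the same route as the paper: differentiate the explicit formula (\ref{philol}) to get $\phi_y = 1 + \frac{2}{c}\frac{(e^{ct}-1)\chi'(y)}{(1-\chi(y))(1-e^{ct}\chi(y))}$, observe that $\phi_y(0,\cdot)=1$ and that $\phi_y$ is strictly decreasing in $t$ (the paper phrases this as $\phi_{ty}<0$ via (\ref{lesphi}), you phrase it as monotonicity of $(e^{ct}-1)/(1-e^{ct}\chi)$ — these are equivalent), and then evaluate at $y=\varepsilon$ using $|\chi'(\varepsilon)|=|\ln\varepsilon|^\alpha$ and $|\chi(\varepsilon)|=o(1)$ to force a first vanishing time of order $|\ln\varepsilon|^{-\alpha}$; taking the infimum over $y$ and invoking continuity gives $t_\varepsilon$ and $\nu_\varepsilon$, exactly as in (\ref{teprop}).
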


And we will also show the following preliminary results. 

\begin{lemma}

\begin{equation}\label{lesphi}
\begin{aligned}
&\phi_{ty}(t,y) = \frac{2\chi'(y)e^{ct}}{(1-e^{ct}\chi(y))^2}, \hspace{0.3cm} \phi_{tyy}(t,y) = \frac{2e^{ct}\left[ \chi''(y) (1-e^{ct}\chi(y)) + 2\chi'(y)^2 e^{ct} \right]}{(1-e^{ct}\chi(y))^3} \\
&\phi_y(t,y) = 1 + \frac{2}{c} \frac{(e^{ct}-1)\chi'(y)}{(1-\chi(y))(1-e^{ct}\chi(y))}, \hspace{0.3cm}\\ 
&\phi_{yy}(t,y) = \frac{2(e^{ct}-1)}{c} \frac{\left[ 1+ e^{ct} - 2 e^{ct} \chi(y) \right]\chi'^2(y) + (1-\chi(y))(1-e^{ct}\chi(y))\chi''(y)}{(1-\chi(y))^2(1-e^{ct}\chi)}
\end{aligned}
\end{equation}

\begin{equation}\label{lessignes}
\begin{aligned}
&\forall y,\forall t<t_\varepsilon, \hspace{0.2cm} \chi(y) \leq 0,\hspace{0.2cm} \chi'(y) \leq 0,\hspace{0.2cm} \phi_{y}(t,y)>0 \\ 
& \forall y\geq \varepsilon, \forall t<t_\varepsilon, \chi'' (y) >0,\hspace{0.2cm} \phi_{yy}(t,y) > 0, \hspace{0.2cm} \phi_{tyy}(t,y) > 0.
\end{aligned}
\end{equation}

\begin{equation}\label{ennu}
\phi_y(t_\varepsilon,\nu_\varepsilon) = 0, \hspace{0.2cm} \phi_{yy}(t_\varepsilon,\nu_\varepsilon) = 0, \hspace{0.2cm} \phi_{tyy} (t_\varepsilon,\nu_\varepsilon) \neq 0,\hspace{0.2cm} \phi_{yyy}(t_\varepsilon,\nu_\varepsilon) \neq 0.
\end{equation}

\end{lemma}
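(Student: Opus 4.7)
The plan is to establish (\ref{lesphi}) by direct differentiation of the closed-form expressions (\ref{philol}) and (\ref{pasintegre}) for $\phi$ and $\phi_t$, to read off (\ref{lessignes}) from the sign pattern of $\chi_\varepsilon$ and its derivatives on $[\varepsilon,1/2]$, and to deduce (\ref{ennu}) by combining the definition of $(t_\varepsilon,\nu_\varepsilon)$ from Lemma \ref{ttttt} with the interior-minimum conditions on $\phi_y(t_\varepsilon,\cdot)$.

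For (\ref{lesphi}), differentiating $\phi_t = (1+e^{ct}\chi)/(1-e^{ct}\chi)$ from (\ref{pasintegre}) in $y$ yields the expression for $\phi_{ty}$, and a second $y$-differentiation yields $\phi_{tyy}$. For $\phi_y$, I would differentiate (\ref{philol}) in $y$ and simplify using the algebraic identity $e^{ct}(1-\chi) - (1-e^{ct}\chi) = e^{ct}-1$ to obtain the stated factored form; $\phi_{yy}$ then follows by differentiating $\phi_y$ once more and collecting terms with $\partial_y(1-\chi) = -\chi'$ and $\partial_y(1-e^{ct}\chi) = -e^{ct}\chi'$, or equivalently by integrating $\phi_{tyy}$ in $t$ from $0$ using $\phi_y(0,y) = 1$.

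For (\ref{lessignes}), the inequalities $\chi \le 0$ and $\chi'(y) = -\psi_\varepsilon(y)|\ln y|^\alpha \le 0$ are immediate from (\ref{encoreunchie}). For $y \ge \varepsilon$, $\psi_\varepsilon \equiv 1$ and $\psi_\varepsilon' \equiv 0$, so $\chi''(y) = \alpha|\ln y|^{\alpha-1}/y > 0$ on $(\varepsilon,1/2)$. The strict positivity $\phi_y > 0$ for $t < t_\varepsilon$ is precisely the defining property of $t_\varepsilon$ in Lemma \ref{ttttt}. Using $1-\chi \ge 1$ and $1-e^{ct}\chi \ge 1$ (from $\chi \le 0$), the numerators $\chi''(1-e^{ct}\chi) + 2 e^{ct}(\chi')^2$ of $\phi_{tyy}$ and $(\chi')^2(1+e^{ct}-2e^{ct}\chi) + \chi''(1-\chi)(1-e^{ct}\chi)$ of $\phi_{yy}$ are strictly positive on $y \ge \varepsilon$ (all summands are non-negative and the $\chi''$ contributions strictly positive). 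Since $(e^{ct}-1)/c > 0$ for $c \neq 0$, $t > 0$, both $\phi_{tyy}$ and $\phi_{yy}$ are positive there.

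For (\ref{ennu}), $\phi_y(t_\varepsilon,\nu_\varepsilon) = 0$ is the statement of Lemma \ref{ttttt}. Since $\phi_y(t_\varepsilon,\cdot) \ge 0$ with equality only at $y = \nu_\varepsilon$, $\nu_\varepsilon$ is an interior minimum and so $\phi_{yy}(t_\varepsilon,\nu_\varepsilon) = 0$. This last identity reads, using the formula for $\phi_{yy}$ in (\ref{lesphi}), as $\chi''(\nu_\varepsilon)(1-\chi(\nu_\varepsilon))(1-e^{ct_\varepsilon}\chi(\nu_\varepsilon)) = -(\chi'(\nu_\varepsilon))^2(1+e^{ct_\varepsilon}-2e^{ct_\varepsilon}\chi(\nu_\varepsilon))$; substituting it into the formula for $\phi_{tyy}$ and simplifying gives
\[
\phi_{tyy}(t_\varepsilon,\nu_\varepsilon) = \frac{2 e^{ct_\varepsilon}(\chi'(\nu_\varepsilon))^2(e^{ct_\varepsilon}-1)}{(1-\chi(\nu_\varepsilon))(1-e^{ct_\varepsilon}\chi(\nu_\varepsilon))^3},
\]
which is nonzero because $\chi'(\nu_\varepsilon) \neq 0$ (otherwise the formula for $\phi_y$ gives $\phi_y(t_\varepsilon,\nu_\varepsilon) = 1$, a contradiction) and $c,t_\varepsilon \neq 0$. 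The main obstacle is $\phi_{yyy}(t_\varepsilon,\nu_\varepsilon) \neq 0$: I would differentiate the numerator $(\chi')^2(1+e^{ct}-2e^{ct}\chi) + \chi''(1-\chi)(1-e^{ct}\chi)$ of $\phi_{yy}$ once more in $y$ and then use $\phi_{yy}(t_\varepsilon,\nu_\varepsilon) = 0$ to eliminate $\chi''$. After simplification this leaves an expression linear in $\chi'''(\nu_\varepsilon)$ and cubic in $\chi'(\nu_\varepsilon)$, and nonvanishing has to be read off from the explicit form of $\chi'''$ in the transition zone $[\varepsilon/2,\varepsilon]$ --- where $\nu_\varepsilon$ must lie, since (\ref{lessignes}) prevents $\phi_{yy}(t_\varepsilon,\cdot)$ from vanishing on $y \ge \varepsilon$. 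This last algebraic nonvanishing is the place where a direct computation using the concrete form of $\psi_\varepsilon$ from (\ref{encoreunchie}) seems unavoidable.
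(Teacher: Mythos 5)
Your proof is correct and follows essentially the same route as the paper's: differentiate the closed forms for $\phi$ and $\phi_t$ to obtain (\ref{lesphi}), read off (\ref{lessignes}) from the signs of $\chi,\chi',\chi''$ together with the defining property of $(t_\varepsilon,\nu_\varepsilon)$, and use the interior-minimum relation $\phi_{yy}(t_\varepsilon,\nu_\varepsilon)=0$ to settle the non-vanishing claims in (\ref{ennu}), with $\phi_{yyy}(t_\varepsilon,\nu_\varepsilon)\neq 0$ ultimately arranged by the choice of $\psi_\varepsilon$ on the transition zone $[\varepsilon/2,\varepsilon]$ (where $\nu_\varepsilon$ must lie, since $\phi_{yy}>0$ for $y\ge\varepsilon$). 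Your treatment of $\phi_{tyy}(t_\varepsilon,\nu_\varepsilon)\neq 0$ --- solving $\phi_{yy}=0$ for $\chi''$ and substituting into $\phi_{tyy}$ to get the explicit positive quantity $2e^{ct_\varepsilon}(\chi')^2(e^{ct_\varepsilon}-1)/[(1-\chi)(1-e^{ct_\varepsilon}\chi)^3]$ --- is the same two-equation elimination the paper performs, merely run in the opposite direction (the paper assumes $\phi_{tyy}=0$, solves for $\chi''$, and deduces $\phi_{yy}\neq 0$ by contraposition).
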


\begin{proof}
The expressions of $\phi_{ty}$, $\phi_{yy}$, $\phi_{tyy}$ are obtained by differentiation of (\ref{conditionimposee}) with respect to $t$, $y$ and $ty$. The expression of $\phi_{y}$ is obtained by differentiation of (\ref{philol}).

Now, using this expression for $\phi_{ty}$, we obtain

\begin{equation}\label{give}
\phi_{ty}(t,\varepsilon) = \frac{- 2 |\ln(\varepsilon)|^\alpha e^{ct}}{(1-\chi^2(\varepsilon))^2} < 1/5|\ln(\varepsilon)|^\alpha,
\end{equation}

because 

\begin{equation}
|\chi(\varepsilon)| \leq \int_{s=\varepsilon/2}^\varepsilon |\ln(\varepsilon/2)|^\alpha \leq 1/10,  
\end{equation}

for $\varepsilon$ small enough, which leads to (\ref{give}). Also, this means that for $t<1$, and for $\varepsilon$ small enough,
 
\begin{equation}\label{uttt}
(1-\chi(y)) > 4/5, \hspace{0.2cm} (1-e^{ct}\chi(y)) > 4/5.
\end{equation}

Now, we show the existence of $t_\varepsilon$ as in Lemma \ref{ttttt}.

\begin{equation}
\phi_{y}(t,\varepsilon)_{|y=0} = 1, \hspace{0.2cm} \phi_{ty}(t,\varepsilon) < -\frac{1}{5} |\ln(\varepsilon)|^\alpha.
\end{equation}

Now that means that there exists $\tilde{t}_\varepsilon \leq 5 \frac{1}{|\ln(\varepsilon)|^\alpha}$ such that $\phi_{y}(\tilde{t}_\varepsilon,\varepsilon) = 0$.

Hence, the set $\{ t<1 |\exists y\hspace{0.2cm} \phi_y(t,y) \neq 0 \}$ is not empty. By continuity, consider $t_\varepsilon$ its minimum, and call $\nu_\varepsilon$ the corresponding $y$. Then we have 
\begin{equation}\label{teprop}
t_\varepsilon \leq 5 \frac{1}{|\ln(\varepsilon)|^\alpha}, \hspace{0.2cm} \phi_y(t_\varepsilon,\nu_\varepsilon) = 0, \hspace{0.2cm} \forall t<t_\varepsilon, \phi_y(t,y) >0.
\end{equation}

Now, we show (\ref{lessignes}). 

\begin{equation}
\chi(y) = \int_{s=0}^y - \psi_\varepsilon(s) |\ln(s)|^\alpha ds \leq 0. \hspace{0.2cm} \chi'(y) = - \psi_\varepsilon(y) |\ln(y)|^\alpha \leq 0.
\end{equation}

\begin{equation}
\phi_y(t,y)_{|t=0} = 1 >0, \hspace{0.2cm} \phi_y(t,y) \neq 0\hspace{0.2cm} \text{when } t<t_\varepsilon \Rightarrow \phi_y(t,y) >0 \hspace{0.2cm} \text{when } t<t_\varepsilon.
\end{equation}

\begin{equation}\label{primeprime}
\forall y\geq\varepsilon, \hspace{0.2cm} \chi''(y) = - \left[ \frac{-\alpha}{y} (-\ln(y))^{\alpha-1} \right] >0.
\end{equation}

\begin{equation}
\forall y\geq \varepsilon,\hspace{0.2cm} \phi_{yy}(t,y) >0,
\end{equation}

Indeed, we plug in (\ref{primeprime}) in the expression of $\phi_{yy}$ given by (\ref{lesphi}), as well as (\ref{uttt}), and we obtain the result. We do the same for $\phi_{tyy}$ and get $\phi_{tyy}>0$.

Lastly, we prove (\ref{ennu}).

By definition of $(t_\varepsilon,\nu_\varepsilon)$, it is clear that $\phi_y(t_\varepsilon,\nu_\varepsilon) = 0$. Now, if there exists $y_\varepsilon$ such that $\phi_{y}(t_\varepsilon,y_\varepsilon) <0$, by continuity, we would have the existence of $t_\varepsilon'$ and $y_\varepsilon'$ such that $\phi_y(t_\varepsilon',y_\varepsilon')=0$ which would contradict the definition of $t_\varepsilon$. Hence, $\phi_y(t_\varepsilon,\nu_\varepsilon)$ is a maximum of $\phi_y$ (for a fixed $t$) and we have $\phi_{yy}(t_\varepsilon,\nu_\varepsilon)=0$.

Now, we consider a root $\tilde{\nu}_\varepsilon$ of $\phi_{tyy}(t_\varepsilon,\cdot)$. we have that
\begin{equation}\label{intint}
\phi_{tyy} (t_\varepsilon,\tilde{\nu}_\varepsilon) = 0 \Rightarrow \chi''(\tilde \nu_\varepsilon) = \frac{-2\chi'(\tilde \nu_\varepsilon)^2 e^{ct_\varepsilon}}{(1-e^{ct_\varepsilon}\chi(\tilde \nu_\varepsilon))}.
\end{equation}

Plugging (\ref{intint}) in the expression of $\phi_{yy}$ given by (\ref{lesphi}), we obtain

\begin{equation}
\phi_{yy}(t_\varepsilon,\tilde{\nu}_\varepsilon) = -\frac{2(e^{ct_\varepsilon}-1)^2}{c} \frac{\chi'^2(\tilde \nu_\varepsilon)}{(1-\chi(\tilde \nu_\varepsilon))^2 (1- e^{ct_\varepsilon}\chi(\tilde \nu_\varepsilon))}
\end{equation}

Using (\ref{uttt}) again, we obtain $\phi_{yy} (t_\varepsilon, \tilde{\nu}_\varepsilon) \neq 0$. Then, using $\phi_{tyy} = 0 \Rightarrow \phi_{yy} \neq 0$, we obtain by contraposition $\phi_{yy} = 0 \Rightarrow \phi_{tyy} \neq 0$. 

Now, because for $y>\varepsilon$, $\phi_{yy}>0$, it means that $\nu_\varepsilon<\varepsilon$. We can choose $\psi_\varepsilon$ such that $\phi_{yyy}(t_\varepsilon,\nu_\varepsilon) \neq 0$.

\end{proof}

Lastly, our last preliminary work will be to prove the following estimates, which will hold when $y-\nu_\varepsilon$ is small enough, and $t$ is close enough to $t_\varepsilon$.

\begin{equation}\label{lesestimatesdesphis}
\begin{aligned}
&\exists C_\varepsilon^1, C_\varepsilon^2>0, \hspace{0.2cm} -C_\varepsilon^1(y-\nu_\varepsilon)^2 \leq \phi_y(t_\varepsilon,y) \leq C_\varepsilon^2(y-\nu_\varepsilon)^2\\
&\exists C_\varepsilon^1, C_\varepsilon^2>0, \hspace{0.2cm} C_\varepsilon^1(y-\nu_\varepsilon)^2 + C_\varepsilon^1 (t_\varepsilon-t) \leq \phi_y(t,y) \leq C_\varepsilon^2(y-\nu_\varepsilon)^2 + C_\varepsilon^2 (t_\varepsilon-t)\\
&\exists C_\varepsilon^1, C_\varepsilon^2>0, \hspace{0.2cm} C_\varepsilon^1(\nu_\varepsilon-y) \leq \phi_{yy}(t_\varepsilon,y) \leq C_\varepsilon^2 (\nu_\varepsilon-y)
\end{aligned}
\end{equation}

These results are quickly obtained using Taylor expansions and (\ref{lessignes}). Now, we do not have a lower bound for $\phi_{yy}$. 

On the right interval, we have that

\begin{equation}\label{phiyy9}
C_\varepsilon^1 (\nu_\varepsilon - y) - C_\varepsilon^1 (t_\varepsilon-t) \leq \phi_{yy}(t,y) \leq C_\varepsilon^2 (\nu_\varepsilon - y) - C_\varepsilon^2 (t_\varepsilon-t),
\end{equation}

this will provide an upper bound for the norm but no lower bound, since the sign of the two expressions are different.

\subsection{Proof of the blow-up as $t \rightarrow t_\varepsilon$}

Our next goal is to prove theorem \ref{deuxiemecas}. 

We will proceed by applying a cutoff to the solution around the $x$ corresponding to $\nu_\varepsilon$, i.e. $x=\phi(t_\varepsilon,\nu_\varepsilon)$.

First, we start by defining $h_\varepsilon$, as well as the cutoff functions.

\begin{definition}
Let $\psi_{\varepsilon}^1: \mathbb{R} \to \mathbb{R}$ be a $C^\infty$ function satisfying

\begin{equation}\label{proppsi}
\left\{
\begin{aligned}
&\psi_\varepsilon^1(x) = 1 \text{ for } \phi(t_\varepsilon,\nu_\varepsilon) - \delta_\varepsilon <x<\phi(t_\varepsilon,\nu_\varepsilon) + \delta_\varepsilon \\
&\psi_\varepsilon^1(x) = 0 \text{ for } \phi(t_\varepsilon,\nu_\varepsilon) + 2 \delta_\varepsilon <x \text{ or } x<\phi(t_\varepsilon,\nu_\varepsilon) - 2 \delta_\varepsilon \\
&0 < \psi_\varepsilon^1(x) < 1 \text{ elsewhere},
\end{aligned}
\right.
\end{equation}

and $\psi_{\varepsilon}^2: \mathbb{R} \to \mathbb{R}$ be a $C^\infty$ function satisfying

\begin{equation}\label{proppsi2}
\left\{
\begin{aligned}
&\psi_\varepsilon^2(x) = 1 \text{ for } - \delta_\varepsilon <x< \delta_\varepsilon \\
&\psi_\varepsilon^2(x) = 0 \text{ for }  2 \delta_\varepsilon <x \text{ or } x< - 2 \delta_\varepsilon \\
&0 < \psi_\varepsilon^2(x) < 1 \text{ elsewhere},
\end{aligned}
\right.
\end{equation}

so that $h_\varepsilon:(x_1,x_2) \mapsto v_\varepsilon(t,x_1,x_2) \psi_\varepsilon^1(x_1) \psi_\varepsilon^2(x_2)$ is localized in a square of width $4 \delta_\varepsilon$, cut in half by $x_1 = \phi(t_\varepsilon,\nu_\varepsilon)$; and such that $h_\varepsilon=v_\varepsilon$ in a square of width $2 \delta_\varepsilon$, cut in half by $x_1 = \nu_{t,\varepsilon}$.

\end{definition}

Also, we now define $I_\varepsilon(t)$, the integral that will diverge at $t=t_\varepsilon$, thus proving the blow-up of $||u_\varepsilon(t,\cdot)||_{H^{11/4}(\mathbb{R}^2)}$.

\begin{multline}
I_\varepsilon(t) = ||h_\varepsilon(t,\cdot)||_{\dot H_{x_1}^{7/4}} =  \int_{x_1=\phi(t_\varepsilon,\nu_\varepsilon) - 2 \delta_\varepsilon }^{\phi(t_\varepsilon,\nu_\varepsilon) + 2 \delta_\varepsilon } 
 \int_{x_2=-2\delta_\varepsilon}^{2 \delta_\varepsilon} \left( \frac{\partial^2 (v \psi_{\varepsilon}^1 \psi_\varepsilon^2)}{\partial x_1^2} \right)(t,x_1,x_2) \cdot \\
\left[
\int_{y = \phi(t_\varepsilon,\nu_\varepsilon) - 2 \delta_\varepsilon }^{\phi(t_\varepsilon,\nu_\varepsilon) + 2 \delta_\varepsilon } |x_1-y|^{-1/2+2\lambda} \left( \frac{\partial^2 (v \psi_{\varepsilon}^1 \psi_\varepsilon^2)}{\partial x_1^2} \right)(t,y,x_2) dy \right] dx_2 dx_1 
.
\end{multline}

We compute the involved derivatives of $v$.

\begin{equation}\label{vr2}
\begin{aligned}
&v(t,\phi(t,y)) = \chi(y) e^{ct} \\
&v_{x} (t,\phi(t,y)) = \frac{e^{ct} \chi'(y)}{ \phi_y(t,y)}\\
&v_{xx} (t,\phi(t,y)) = \frac{e^{ct} \left( \chi''(y) \phi_y(t,y) - \chi'(y) \phi_{yy}(t,y) \right)}{(\phi_{y}(t,y))^3}
\end{aligned}
\end{equation}

We have the following estimation 

\begin{equation}\label{remai2}
\begin{aligned}
&h_\varepsilon(t,x_1,x_2) = \psi^1_{\varepsilon}(x_1) \psi_\varepsilon^2 (x_2) v_\varepsilon(t,x), \\
&\frac{\partial h_\varepsilon}{\partial_{x_1}}(t,x_1,x_2) = \psi_\varepsilon^2(x_2) \big[ \psi_\varepsilon^{1,'} (x_1) v_\varepsilon(t,x_1) + \psi_\varepsilon^1(x_1) v_{\varepsilon,x}(t,x_1), \big]\\
&\frac{\partial^2 h_\varepsilon}{\partial_{x_1}^2}(t,x_1,x_2) \psi_{\varepsilon}^2(x_2) \big[ \psi_\varepsilon^{1,''}(x_1) v_\varepsilon(t,x_1) + 2 \psi_\varepsilon^{1,'}(x_1) v_{\varepsilon,x}(t,x_1) + \psi_\varepsilon^1(x_1) v_{\varepsilon,xx}(t,x_1) \big],\\
&\Rightarrow \exists C_1,C_2>0, \hspace{0.2cm} C_1 \psi_\varepsilon^2(x_2) v_{\varepsilon,xx}(t,x_1) \leq \frac{\partial^2 h_\varepsilon}{\partial_{x_1}^2}(t,x_1,x_2) \leq C_2 \psi_\varepsilon^2(x_2) v_{\varepsilon,xx}(t,x_1) \hspace{0.1cm} (i) \\
&\Rightarrow \exists C_1>0, \hspace{0.2cm} \left|\frac{\partial^2 h_\varepsilon}{\partial_{x_1}^2}(t,x_1,x_2)\right| \leq C_1 \left|\psi_\varepsilon^2(x_2) v_{\varepsilon,xx}(t,x_1)\right|, \hspace{0.1cm} (ii) \\
\end{aligned}
\end{equation}

Now, with $K_\varepsilon = \int_{x_2=-2\delta_\varepsilon}^{2\delta_\varepsilon} \left( \psi_\varepsilon(x_2) \right)^2  dx_2 $,

\begin{multline}
I_\varepsilon(t) =   K_\varepsilon \int_{x_1 = \phi(t_\varepsilon,\nu_\varepsilon) - 2 \delta_\varepsilon}^{\phi(t_\varepsilon,\nu_\varepsilon) + 2 \delta_\varepsilon}  \left( \frac{\partial^2 (v \psi_{\varepsilon}^1)}{\partial x_1^2} \right)(t,x_1,x_2) \cdot \\
\left[
\int_{y = \phi(t_\varepsilon,\nu_\varepsilon) - 2 \delta_\varepsilon }^{\phi(t_\varepsilon,\nu_\varepsilon) + 2 \delta_\varepsilon } |x_1-y|^{-1/2+2\lambda} \left( \frac{\partial^2 (v \psi_{\varepsilon}^1)}{\partial x_1^2} \right)(t,y,x_2) dy \right] dx_2 dx_1 .
\end{multline}

We now split the first integral's domain into three, corresponding to the three following integration domains. 

\begin{equation}
\begin{aligned}
I_\varepsilon(t)
=& K_\varepsilon \int_{x_1=\phi(t_\varepsilon,\nu_\varepsilon) - 2 \delta_\varepsilon }^{\phi(t_\varepsilon,\nu_\varepsilon) - \delta_\varepsilon }  \left( \frac{\partial^2 (v \psi_{\varepsilon}^1)}{\partial x_1^2} \right)(t,x_1) \\
& \cdot  \int_{y = \phi(t_\varepsilon,\nu_\varepsilon) - 2 \delta_\varepsilon }^{\phi(t_\varepsilon,\nu_\varepsilon) + 2 \delta_\varepsilon } |x_1-y|^{-1/2+2\lambda} \left( \frac{\partial^2 (v \psi_{\varepsilon}^1)}{\partial x_1^2} \right)(t,y) dy dx_1 \\
&+ K_\varepsilon \int_{x_1=\phi(t_\varepsilon,\nu_\varepsilon) - \delta_\varepsilon }^{\phi(t_\varepsilon,\nu_\varepsilon) + \delta_\varepsilon } \left( \frac{\partial^2 v}{\partial x_1^2} \right)(t,x_1) \\
& \cdot  \int_{y = \phi(t_\varepsilon,\nu_\varepsilon) - 2 \delta_\varepsilon }^{\phi(t_\varepsilon,\nu_\varepsilon) + 2 \delta_\varepsilon } |x_1-y|^{-1/2+2\lambda} \left( \frac{\partial^2 v}{\partial x_1^2} \right)(t,y) dy dx_1 \\
&+ K_\varepsilon \int_{x_1=\phi(t_\varepsilon,\nu_\varepsilon) + \delta_\varepsilon }^{\phi(t_\varepsilon,\nu_\varepsilon) + 2 \delta_\varepsilon }  \left( \frac{\partial^2 (v \psi_{\varepsilon}^1)}{\partial x_1^2} \right)(t,x_1) \\
& \cdot  \int_{y = \phi(t_\varepsilon,\nu_\varepsilon) - 2 \delta_\varepsilon }^{\phi(t_\varepsilon,\nu_\varepsilon) + 2 \delta_\varepsilon }|x_1-y|^{-1/2+2\lambda} \left( \frac{\partial^2 (v \psi_{\varepsilon}^1)}{\partial x_1^2} \right)(t,y) dy dx_1. \\
\end{aligned}
\end{equation}

The first difference with the computation made in \cite{ohlmann2021illposedness} is that the expressions of $\phi$, $\phi_y$ and $\phi_{yy}$ are not the same since they depend on the value of $v$, which itself depends on the underlying ODE.
The second difference is that the value of $v$ is not the same because of the underlying ODE, which will have an impact on the computations when we will show the blow-up. Fortunately, it does not make a big difference in the computations.

In the following computations, we use the function $\zeta_\varepsilon^i(t)$ that will be defined in the coming lemma.

\begin{equation}
\begin{aligned}
I_\varepsilon(t)
= K_\varepsilon \int_{y_1=\zeta_\varepsilon^1(t)}^{\zeta_\varepsilon^2(t)} & \phi_{\varepsilon,y}(t,y_1) \left( \frac{\partial^2 (v \psi_{\varepsilon}^1)}{\partial x_1^2} \right)(t,\phi_\varepsilon(t,y_1))\\
 \cdot  \int_{y_2 = \zeta_\varepsilon^1(t)}^{\zeta_\varepsilon^4(t)}& |\phi_\varepsilon(t,y_1)- \phi_\varepsilon(t,y_2)|^{-1/2+2\lambda} \phi_{\varepsilon,y}(t,y_2) \left( \frac{\partial^2 (v \psi_{\varepsilon}^1)}{\partial x_1^2} \right)(t,\phi_\varepsilon(t,y_2)) dy dx_1 \\
+ K_\varepsilon \int_{y_1=\zeta_\varepsilon^2(t)}^{\zeta_\varepsilon^3(t)} & \phi_{\varepsilon,y}(t,y_1) \left( \frac{\partial^2 (v)}{\partial x_1^2} \right)(t,\phi_\varepsilon(t,y_1))\\
 \cdot  \int_{y_2 = \zeta_\varepsilon^1(t)}^{\zeta_\varepsilon^4(t)}& |\phi_\varepsilon(t,y_1)- \phi_\varepsilon(t,y_2)|^{-1/2+2\lambda} \phi_{\varepsilon,y}(t,y_2) \left( \frac{\partial^2 (v)}{\partial x_1^2} \right)(t,\phi_\varepsilon(t,y_2)) dy dx_1 \\
+ K_\varepsilon \int_{y_1=\zeta_\varepsilon^3(t)}^{\zeta_\varepsilon^4(t)} & \phi_{\varepsilon,y}(t,y_1) \left( \frac{\partial^2 (v \psi_{\varepsilon}^1)}{\partial x_1^2} \right)(t,\phi_\varepsilon(t,y_1))\\
 \cdot  \int_{y_2 = \zeta_\varepsilon^1(t)}^{\zeta_\varepsilon^4(t)}& |\phi_\varepsilon(t,y_1)- \phi_\varepsilon(t,y_2)|^{-1/2+2\lambda} \phi_{\varepsilon,y}(t,y_2) \left( \frac{\partial^2 (v \psi_{\varepsilon}^1)}{\partial x_1^2} \right)(t,\phi_\varepsilon(t,y_2)) dy dx_1 \\ \\
= K_\varepsilon (I_\varepsilon^1(t) + I_\varepsilon^2(t) +& I_\varepsilon^3(t)).
\end{aligned}
\end{equation}

Here, the strategy is to show that $I_2(t) >> |I_1(t)| + |I_3(t)| $, and that $I_2(t) \rightarrow \infty$ as $t\rightarrow t_\varepsilon$. At this point, the proof is extremely similar to the proof that we have already done in \cite{ohlmann2021illposedness}, as the only difference is the term $e^{ct}$ that is introduced in the values of $v$ and its derivatives.

Since this coefficient is bounded and regular close to $t_\varepsilon$ and $t_\varepsilon$ satisfies (\ref{teprop}), we are now in the same situation as in chapter 6 of \cite{ohlmann2021illposedness}. We will hence skip those computations since they are identical, as we have shown how we deal with modifications.

\section{Introducing a source term with a $x_2$ dependency}\label{modif2}

This chapter constitutes the main contribution of this paper.
Here, we study the stability of the blow-up with respect to the addition of a source term in the equation. This case is a study of the stability with respect to the equation, but can also be seen as a next step toward the stability of the phenomenon by perturbations depending on $x_2$ as well. Indeed, using spectral cutoffs of $u$ with respect to its second space variable, it may be possible that we can reduce the general stability to this case, for a fixed range of frequencies. This case is more tedious as we no longer have an explicit resolution, but it also deeper as we show that the function has to behave pathologically. We will also see that we can, in some sense, show that the solution will locally share similarities with the function that we previously computed for the problem without the $x_2$ dependency. To do so, we will use an "approximate" characteristics method.

\subsection{Definition of the new problem and preliminary results}

We now consider the following modified Cauchy problem.

\begin{equation} \label{Cau5}
\left\{
\begin{aligned}
&\Box_{x_1} u(t,x_1,x_2) = DuD^2u + f(t,x_1,x_2,Du)\\
&\frac{\partial u}{\partial t}_{|t=0} = - \chi, \hspace{0.2cm} u_{|t=0} = 0.
\end{aligned}
\right.
\end{equation}

\begin{remark}
Here, we study the case where $f$ depends on $Du$ but not on $u$ because it is the most difficult case. The case where $f$ depends on $u$ and $Du$ is really similar and does not bring more difficulties.
\end{remark}

We also define 

\begin{equation}\label{chilol}
\chi_\varepsilon(x_1,x_2) = - \int_{s=0}^{x_1} \psi_\varepsilon(s) |\ln(s)|^\alpha ds.
\end{equation}

Where $\psi_\varepsilon: \mathbb{R^+}\to \mathbb{R^+}$ satisfies the following conditions.

\begin{equation}\label{psilol}
\left\{
\begin{aligned}
&\psi_\varepsilon(s) = 0,\hspace{0.2cm} \forall s<\frac{\varepsilon}{2},\\
&\psi_\varepsilon(s) = 1,\hspace{0.2cm} \forall s>\varepsilon, \\
&\psi_\varepsilon(s) \in [0,1], \hspace{0.2cm} \forall s\in \mathbb{R}^{+},\\
&|\psi_\varepsilon'|(s) \leq \frac{2}{\varepsilon}, \hspace{0.2cm} \forall s \in \mathbb{R}^+.
\end{aligned}
\right.
\end{equation}

We will consider a cutoff of (\ref{chilol}) defined on $\mathbb{R}^2$, $\chi$, without explicit relabeling, as presented in (\ref{condcondiniini}).  Lastly, we also assume

\begin{equation}\label{uniff}
\forall \alpha, \exists C, \hspace{0.2cm} \left| \frac{\partial^\alpha}{\partial x^\alpha} f \right| \leq C.
\end{equation}

Now, rewriting (\ref{Cau5}), we obtain with $v = Du \neq 1$,

\begin{equation} \label{Cau6}
\left\{
\begin{aligned}
&\left( \partial_t + \frac{1+v}{1-v} \partial_{x_1} \right) v  = \frac{f(t,x_1,x_2,v)}{1-v} = g(t,x_1,x_2,v). \\
&\frac{\partial u}{\partial t}_{|t=0} = - \chi, \hspace{0.2cm} u_{|t=0} = 0,
\end{aligned}
\right.
\end{equation}

where $g$ also satisfies (\ref{uniff}) as $v<0$. We call $C_g$ the involved constant. Note that (\ref{Cau6}) is a perturbed version of (\ref{dimone}).

We now state the main theorem of the chapter.

\begin{theorem}\label{troisiemecas}

Let $u_\varepsilon$ be the solution to problem (\ref{Cau5}).
There exists a time $t_\varepsilon$ such that 
\begin{equation}
\left\{
\begin{aligned}
&||u_\varepsilon(0,\cdot)||_{H^{11/4}(\ln H)^{-\beta}} < \infty \\
&||u_\varepsilon(t,\cdot)||_{H^{11/4}(\ln H)^{-\beta})} \rightarrow \infty \hspace{0.2cm} \text{ as } t\rightarrow t_\varepsilon.
\end{aligned}
\right.
\end{equation}

Also, we have that 

\begin{equation}
t_\varepsilon \rightarrow 0 \hspace{0.2cm} \text{ as } \varepsilon \rightarrow 0.
\end{equation}

\end{theorem}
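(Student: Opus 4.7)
The plan is to extend the characteristic analysis of Section \ref{modif1} to the perturbed problem (\ref{Cau6}) by treating $x_2$ as a frozen parameter and introducing an \emph{approximate} characteristic for which the flow $\phi$ and the restriction $w := v\circ\phi$ are no longer explicit but can be controlled via a Grönwall-type estimate. Specifically, for each $x_2$, I would define $\phi(t,y,x_2)$ by
$$\phi_t = \frac{1+v(t,\phi,x_2)}{1-v(t,\phi,x_2)}, \qquad \phi(0,y,x_2) = y,$$
and set $w(t,y,x_2) := v(t,\phi(t,y,x_2),x_2)$. Then (\ref{Cau6}) yields $w_t = g(t,\phi,x_2,w)$ with $w(0,y,x_2) = \chi(y,x_2)$. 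Since $g$ is uniformly bounded by (\ref{uniff}), integrating gives $w = \chi(y,x_2) + O(t)$ uniformly on the relevant time scales, so along approximate characteristics the transported field $w$ stays close to its unperturbed value.

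The key step is the coupled linear system for $(\phi_y, w_y)$ obtained by differentiating in $y$:
$$\partial_t \phi_y = \frac{2\,w_y}{(1-w)^2}, \qquad \partial_t w_y = g_{x_1}(t,\phi,x_2,w)\,\phi_y + g_v(t,\phi,x_2,w)\,w_y,$$
with $\phi_y(0)=1$ and $w_y(0)=\partial_{x_1}\chi(y,x_2)$. Near $y=\varepsilon$ we have $|\partial_{x_1}\chi| \sim |\ln(\varepsilon)|^\alpha$, while $g_{x_1}$ and $g_v$ are $O(1)$, so a Grönwall argument shows that $w_y$ stays comparable to $\partial_{x_1}\chi$ on the short interval $t \lesssim 1/|\ln(\varepsilon)|^\alpha$. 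Consequently $\phi_y$ decreases at essentially the same rate as in the unperturbed case, and one obtains $t_\varepsilon \to 0$ and $\nu_\varepsilon$ with $\phi_y(t_\varepsilon,\nu_\varepsilon,x_2)=0$, together with analogues of (\ref{lesphi})--(\ref{ennu}) and (\ref{lesestimatesdesphis}) up to constants uniform in the perturbation.

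With these characteristic estimates in hand, the blow-up of $\|u_\varepsilon(t,\cdot)\|_{H^{11/4}(\ln H)^{-\beta}(\mathbb{R}^2)}$ follows by repeating the scheme of Section \ref{modif1}. The chain rule produces analogues of (\ref{vr2}) with $\chi(y)e^{ct}$ replaced by $w(t,y,x_2)$, so that $v_{xx}$ behaves like $\phi_y^{-3}$. After localizing by $\psi_\varepsilon^1,\psi_\varepsilon^2$ around $(\phi(t_\varepsilon,\nu_\varepsilon,x_2),\,x_2^*)$, changing variables from $x_1$ to $y$, and splitting the fractional Sobolev integral into the same three regions as before, the middle piece diverges while the two flanking integrals remain controlled. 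Integrating against an appropriate $x_2$-slab and invoking the assumed genuine $x_2$-dependence of the solution transfers the one-dimensional blow-up to the full two-dimensional norm $\dot H^{11/4}(\ln H)^{-\beta}(\mathbb{R}^2)$.

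The main obstacle will be the Grönwall/perturbation step: proving that $\phi_y$ still vanishes at time $t_\varepsilon = O(1/|\ln(\varepsilon)|^\alpha)$, and that the second-order degeneracies $\phi_{yy}(t_\varepsilon,\nu_\varepsilon)=0$, $\phi_{tyy}(t_\varepsilon,\nu_\varepsilon)\neq 0$ of (\ref{ennu}) persist in the absence of any explicit formula. The argument rests on a clear separation of scales: the source $g$ introduces only $O(1)$ effects over unit time, whereas the blow-up mechanism operates on the scale $t\sim |\ln(\varepsilon)|^{-\alpha}$ with amplitude $|\ln(\varepsilon)|^\alpha$, so the perturbation is quantitatively too small to interfere with the leading-order dynamics that drive $\phi_y\to 0$, and all subsequent estimates inherit the unperturbed scaling.
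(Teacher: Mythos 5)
Your high-level scheme — approximate characteristics via the ODE $\phi_t = (1+v)/(1-v)$, Grönwall control of $(\phi_{x_1},v_{x_1})$, a bootstrap showing $0<\phi_{x_1}\le 1$, locating a first degeneracy time $t_\varepsilon = O(|\ln\varepsilon|^{-\alpha})$ by comparing the $O(1)$ source term against the $|\ln\varepsilon|^\alpha$ amplitude of $\chi'$, and finally re-running the fractional-Sobolev blow-up argument — is indeed the paper's approach, and the ``separation of scales'' heuristic in your last paragraph is the correct justification of the Grönwall step.

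However, the proposal misrepresents what the genuine obstacle is and thereby leaves a gap. You frame the problem as essentially one-dimensional with $x_2$ ``frozen,'' and claim the final step is just to ``transfer the one-dimensional blow-up to the two-dimensional norm.'' But the critical point in the paper is genuinely two-dimensional: $(\nu_\varepsilon^1,\nu_\varepsilon^2)$ is a joint non-degenerate minimum of $\phi_{x_1}$ in $(x_1,x_2)$, with $\phi_{x_1x_1}=\phi_{x_1x_2}=0$ there and a positive-definite Hessian, yielding
\[
\frac{C^1_\varepsilon}{(x_1-\nu_\varepsilon^1)^2+(x_2-\nu_\varepsilon^2)^2+(t_\varepsilon-t)}
\;\le\;
\frac{1}{\phi_{x_1}}
\;\le\;
\frac{C^2_\varepsilon}{(x_1-\nu_\varepsilon^1)^2+(x_2-\nu_\varepsilon^2)^2+(t_\varepsilon-t)}.
\]
The $(x_2-\nu_\varepsilon^2)^2$ term in the denominator is essential: it makes all subsequent $x_1$-integral estimates depend on $x_2$, and the $x_2$-integration then produces the final powers of $(t_\varepsilon-t)$. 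A frozen-$x_2$ picture gives a family $\nu_\varepsilon(x_2),t_\varepsilon(x_2)$ and no control on how $\phi_{x_1}$ grows transversally in $x_2$, which is exactly what the H\"older-in-$(x_1,y)$ then $x_2$-integration scheme needs. Likewise, the assumption $\phi_{x_2x_1x_1}(t_\varepsilon,\nu_\varepsilon)\ne 0$ and the second-order Taylor expansion of $\phi_{x_1x_1}$ in $(x_1,x_2,t)$ are what generate the $(iv)_1,\dots,(iv)_4$ (and $A_1,B_1,\dots,B_5$) decomposition; because $\phi_{x_1x_1}$ no longer has the explicit product structure of the model equation, the sign- and order-comparisons must be redone term by term. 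Your sketch does not engage with any of this, and the claim that only the Grönwall step is delicate understates where the real work in this section actually lies.
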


We now do some preliminary work, later we will prove a technical lemma, and then we will be able to make the proof of the theorem. Here, we will make free use of Gronwall type inequalities that can for instance be found in \cite{dragomir2003some}.

\textbf{Preliminary work}

First, we define $\phi$ as

\begin{equation}\label{defphiproblem3}
\left\{
\begin{aligned}
&\phi(0,x_1,x_2) = x_1 \\
& \partial_t \phi(t,x_1,x_2) = \frac{1+v(t,\phi(t,x_1,x_2),x_2)}{1-v(t,\phi(t,x_1,x_2),x_2)}.
\end{aligned}
\right.
\end{equation}

Now, using the chain rule, we obtain from (\ref{defphiproblem3}) and (\ref{Cau6}), 

\begin{equation}\label{diffvvv}
\frac{\partial}{\partial_t} \left(v(t,\phi(t,x_1,x_2),x_2) \right) = g(t,\phi(t,x_1,x_2),x_2,v(t,\phi(t,x_1,x_2),x_2)).
\end{equation}

From now on, for clarity's sake, we will not always specify the variables when there is no ambiguity. Most of the time, we only specify if the first space variable is $x_1$ or $\phi(t,x_1,x_2)$. Now, we obtain from (\ref{diffvvv}). 

\begin{equation}\label{intvvv}
v(\phi) = \chi(x_1) + \int_{\tau=0}^t g(\tau,\phi) d\tau.
\end{equation}

Differentiating (\ref{intvvv}) leads to the following expressions for the derivatives of $v$.

\begin{multline}\label{vvvx}
\partial_{x_1} \phi(x_1) \partial_{x_1} v(\phi) = \chi'(x_1) + \int_\tau \partial_{x_1} \phi(\tau,x_1) \partial_1 g(\tau,x_1) + \int_\tau \partial_{x_1} \phi(\tau,x_1) \partial_{x_1} v(\tau,x_1) \partial_3 g(\tau,x_1) \\
\Rightarrow \partial_{x_1} v(\phi)= \frac{\chi'(x_1) + \int_\tau \partial_{x_1} \phi(\tau,x_1) \partial_1 g(\tau,\phi) + \int_\tau \partial_{x_1} \phi(x_1) \partial_{x_1} v(\tau,x_1) \partial_3 g(\tau,x_1) }{\partial_{x_1} \phi(\tau,x_1)}.
\end{multline}

\begin{multline}\label{vvvxx}
\partial_{x_1}^2 v(\phi) = \frac{1}{\left( \partial_{x_1} \phi(x_1) \right)^2} \Big[ \chi''(x_1) + \int_{\tau} \left( \partial_{x_1} \phi(\tau,x_1) \right)^2\partial_{1}^2 g(\tau,\phi) + \int_\tau \partial_{x_1}^2 \phi(\tau,x_1) \partial_{1} g(\tau,\phi) \\
+ \int_\tau \left( \partial_{x_1} \phi(\tau,x_1) \right)^2 \partial_{x_1} v(\tau,\phi) \partial_3 \partial_1 g(\tau,\phi) + \int_\tau \partial_{x_1}^2 \phi(\tau,x_1) \partial_3 g(\tau,\phi) \partial_{x_1} v(\tau,\phi) \\
+ \int_\tau \left( \partial_{x_1} \phi(\tau,x_1) \right)^2 \partial_1 \partial_3 g(\tau,\phi) \partial_{x_1} v(\tau,\phi)  
+ \int_\tau \left( \partial_{x_1} \phi(\tau,x_1) \right)^2 \left( \partial_{x_1} v(\tau,\phi) \right)^2 \partial_3^2 g(\tau,\phi)  \\+ \int_\tau \left( \partial_{x_1} \phi(\tau,x_1) \right)^2  \partial_{x_1}^2 v(\tau,\phi) \partial_3 g(\tau,\phi)  \Big] \\
- \frac{ \partial_{x_1}^2 \phi(x_1)}{\left( \partial_{x_1} \phi(x_1) \right)^3 } \Big[ \chi'(x_1) + \int_\tau \partial_{x_1} \phi(\tau,x_1) \partial_1 g(\tau,\phi) + \int_\tau \partial_{x_1} \phi(\tau,x_1) \partial_3 g(\tau,\phi) \partial_{x_1} v(\tau,\phi) \Big]\\
= A - B.
\end{multline}

We also compute the derivatives of $\phi$ that we will need. From (\ref{defphiproblem3}), we get

\begin{equation}\label{phiphiphitxlol}
\phi_{tx_1}(x_1) = \frac{2 \phi_{x_1} v_{x_1}(\phi)}{(1-v(\phi))^2}.
\end{equation}

\begin{equation}\label{phiphiphitxxlol}
\phi_{tx_1x_1}(x_1) = \frac{2 (v(\phi))_{x_1x_1} (1-v(\phi)) + 2 (v(\phi))_{x_1}^2}{(1-v(\phi))^3}.
\end{equation}

At this stage, we will assume that $\phi_{x_2x_1x_1}(t_\varepsilon,\nu_\varepsilon)$ is not $0$. This is an important assumption, as assuming $\phi_{x_2x_1x_1}(t_\varepsilon,\nu_\varepsilon)=0$ leads to different computations. However, we consider only this case because the other case is much more similar that the situations that we have previously studied.

We start with a bootstrap argument to control the function $\phi_x$ and show estimates. We choose $\psi_\varepsilon$ to be non-decreasing with respect to $x$. We call $c_0$ the number (implicitly depending on $\varepsilon$) such that $\psi_\varepsilon \in [0,\frac{1}{10}]$ for $x\in [\varepsilon/2,c_0]$. We consider only times such that $\phi_x \neq 0$ and $t<1/|\ln|\ln(\varepsilon)||$.

We can now state the technical lemma.

\begin{lemma}\label{laconditionimportante}

We consider $x\in [c_0,\varepsilon]$.
There exists a time $t_\varepsilon$ such that the following properties are verified.

\begin{equation}\label{teps1}
t_\varepsilon \rightarrow 0 \hspace{0.2cm} \text{ as } \varepsilon \rightarrow 0. 
\end{equation}

\begin{equation}\label{teps2}
\begin{aligned}
&\phi_{x_1} > 0 \hspace{0.2cm} \forall t<t_\varepsilon\\
&\exists (\nu_\varepsilon^1, \nu_\varepsilon^2) \hspace{0.2cm} s.t. \hspace{0.2cm} \phi_{x_1}(t_\varepsilon,\nu_\varepsilon^1,\nu_\varepsilon^2) = 0
\end{aligned}
\end{equation}

If $|\phi_{x_1}(t,x_1,x_2)| \leq 2$ for $t\in [0,t_1] \subseteq [0,t_\varepsilon]$ and $x\in [c_0,\varepsilon]$, then $\phi_{x_1}(t,x_1,x_2) \leq 1$ for $t\in [0,t_1]$ and $x\in [c_0,\varepsilon]$.

\begin{equation}
\begin{aligned}
&0 < \phi_{x_1} < 1, \hspace{0.1cm} \phi_{t,x_1} <0 \hspace{0.2cm}, \forall t<t_\varepsilon, \\
&\exists C_\varepsilon, \hspace{0.2cm} |\phi_{t,x_1,x_1}| \leq C_\varepsilon |\ln(t_\varepsilon-t)|. \\
&v_{x_1} <0, \hspace{0.2cm} v<0\\
&\exists C_\varepsilon, \hspace{0.2cm} |v_{x_1}(\phi)| \leq \frac{C_\varepsilon(\chi'(x_1) + C_\varepsilon)}{|\phi_{x_1}|}.
\end{aligned}
\end{equation}

If $x_1$, $x_2$ and $t$ are sufficiently close to $(\nu_\varepsilon^1,\nu_\varepsilon^2,t_\varepsilon)$, we have the following estimates. 

\begin{multline}\label{estestestphix}
\exists C_\varepsilon^1,C_\varepsilon^2 >0, \hspace{0.2cm} \frac{C_\varepsilon^1}{(x_1-\nu_\varepsilon^1)^2 + (x_2-\nu_\varepsilon^2)^2 + (t_\varepsilon-t)} \leq \frac{1}{\phi_{x_1}(x_1)} \\ \leq 
\frac{C_\varepsilon^2}{(x_1-\nu_\varepsilon^1)^2 + (x_2-\nu_\varepsilon^2)^2 + (t_\varepsilon-t)}\\
\exists C_\varepsilon^1, C_\varepsilon^2, C_\varepsilon^3, \hspace{0.2cm} \phi_{x_1x_1}(x_1) = C_\varepsilon^1 (x_1-\nu_\varepsilon^1) + C_\varepsilon^2 (x_2-\nu_\varepsilon^2) + C_\varepsilon^3 (t_\varepsilon-t) \\
+ \sum_{i+j+k=2} (x_1-\nu_\varepsilon^1)^i (x_2-\nu_\varepsilon^2)^j (t_\varepsilon-t)^k f_{i,j,k}(t,x_1,x_2),
\end{multline}

where all the involved $f$ functions are bounded near $(t,\nu_\varepsilon^1,\nu_\varepsilon^2)$. We also have

\begin{equation}\label{superestA}
|A| \leq \frac{C_\varepsilon |\ln(t_\varepsilon-t)|}{|\phi_{x_1}|^2},
\end{equation}

where $A$ is the $A$ involved in (\ref{vvvxx}).

Lastly, for $\varepsilon$ small enough, we have 

\begin{equation}\label{superestB}
\begin{aligned}
&\frac{C_\varepsilon \phi_{x_1x_1}(x_1)}{\left( \phi_{x_1}(x_1) \right)^3} \frac{\chi'}{2} \leq B \leq \frac{C_\varepsilon \phi_{x_1x_1}(x_1)}{\left( \phi_{x_1}(x_1) \right)^3} 2\chi', \\
&\frac{C_\varepsilon \phi_{x_1x_1}(x_1)}{\left( \phi_{x_1}(x_1) \right)^3} 2\chi' \leq B \leq \frac{C_\varepsilon \phi_{x_1x_1}(x_1)}{\left( \phi_{x_1}(x_1) \right)^3} \frac{\chi'}{2}.
\end{aligned}
\end{equation}

The first inequality being verified when $\phi_{x_1x_1} \leq 0$, and the second being verified when $\phi_{x_1x_1} \geq 0$.

\end{lemma}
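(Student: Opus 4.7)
The plan is to treat Lemma \ref{laconditionimportante} as a long Gronwall and bootstrap exercise, modelled after the unperturbed computations of chapter 6 of \cite{ohlmann2021illposedness} while tracking the contributions of the $x_2$-dependent source $g$. The key structural fact is that $g$ and all its derivatives are uniformly bounded by (\ref{uniff}), so along every characteristic $t\mapsto \phi(t,x_1,x_2)$ the source contributes only $O(t)$ corrections to the quantities of interest, whereas the leading dynamics are driven by $\chi'$ and $\chi''$, whose sizes are of order $|\ln\varepsilon|^\alpha$ on $[c_0,\varepsilon]$. Consequently, on the time scale $t\leq C/|\ln\varepsilon|^\alpha$ the perturbation is negligible and each quantity behaves, to leading order, as in the one-dimensional model (\ref{dimone}).

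First, I would establish the sign and the bound on $v$ and $v_{x_1}$. From (\ref{intvvv}), $v(t,\phi)=\chi(x_1)+\int_0^t g\,d\tau$, so $|v-\chi|\leq C_g t$ and $v<0$ for $t$ small. Differentiating (\ref{intvvv}) yields a Volterra-type equation for $\phi_{x_1}v_{x_1}(\phi)$ coupled to $\phi_{x_1}$; after dividing by $\phi_{x_1}$ and applying Gronwall one obtains $|v_{x_1}(\phi)|\leq C_\varepsilon(|\chi'(x_1)|+C_\varepsilon)/|\phi_{x_1}|$ and the sign $v_{x_1}<0$. Substituting into (\ref{phiphiphitxlol}) shows that $\phi_{x_1}v_{x_1}(\phi)\sim \chi'(x_1)$ to leading order on $[c_0,\varepsilon]$ and hence $\phi_{tx_1}\sim -|\ln\varepsilon|^\alpha$, so $\phi_{x_1}$ decays from $1$ to $0$ at some first time $t_\varepsilon\leq C/|\ln\varepsilon|^\alpha$, giving (\ref{teps1}) and (\ref{teps2}). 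The bootstrap $|\phi_{x_1}|\leq 2\Rightarrow \phi_{x_1}\leq 1$ closes because the cancellation between $\phi_{x_1}$ in the numerator and $1/\phi_{x_1}$ in the upper bound on $v_{x_1}(\phi)$ keeps $\phi_{tx_1}$ uniformly negative of size $-|\ln\varepsilon|^\alpha$ independently of the weaker hypothesis.

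For the Taylor-type estimates near $(t_\varepsilon,\nu_\varepsilon^1,\nu_\varepsilon^2)$, I would exploit the fact that $\phi_{x_1}$ attains the value $0$ at this point as a minimum on the slab $t<t_\varepsilon$, so $\phi_{x_1x_1}(t_\varepsilon,\nu_\varepsilon)=\phi_{x_1x_2}(t_\varepsilon,\nu_\varepsilon)=0$, the spatial Hessian of $\phi_{x_1}$ is positive semi-definite, and $\phi_{tx_1}(t_\varepsilon,\nu_\varepsilon)<0$ from the previous step. The assumption $\phi_{x_2x_1x_1}(t_\varepsilon,\nu_\varepsilon)\neq 0$ together with the analogous non-vanishing of $\phi_{x_1x_1x_1}(t_\varepsilon,\nu_\varepsilon)$ produced by the choice of $\psi_\varepsilon$ renders the Hessian positive definite and pins down the second-order expansion $\phi_{x_1}\sim a(x_1-\nu_\varepsilon^1)^2+b(x_2-\nu_\varepsilon^2)^2+c(t_\varepsilon-t)$ with $a,b,c>0$. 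This yields the two-sided bound on $1/\phi_{x_1}$ in (\ref{estestestphix}) and, by direct Taylor expansion at the same point, the first-order expansion of $\phi_{x_1x_1}$. The logarithmic estimate $|\phi_{tx_1x_1}|\leq C_\varepsilon|\ln(t_\varepsilon-t)|$ follows from (\ref{phiphiphitxxlol}) by expressing $v_{x_1x_1}(\phi)$ through (\ref{vvvxx}) and observing that the singular contribution of size $1/\phi_{x_1}^2$, integrated against bounded quantities produced by $g$ and Gronwall, only yields a logarithmic factor in $t_\varepsilon-t$.

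Finally, the two-sided sandwiches on $A$ and $B$ in (\ref{superestA}) and (\ref{superestB}) are obtained by plugging the previous estimates into the definitions in (\ref{vvvxx}). The term $B$ equals $\phi_{x_1x_1}\chi'/\phi_{x_1}^3$ up to a controlled error of order $C_\varepsilon t$ coming from the integrals of $g$ and $v_{x_1}$; for $\varepsilon$ small this error is dominated by $\chi'$, giving the sandwich whose orientation is dictated by the sign of $\phi_{x_1x_1}$. For $A$, every integrand in (\ref{vvvxx}) is bounded using (\ref{uniff}), the Gronwall bound on $v_{x_1}(\phi)$ and the expansion of $\phi_{x_1}$, the dominant contribution being of order $|\ln(t_\varepsilon-t)|/\phi_{x_1}^2$. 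The main obstacle I anticipate is the coupling: $v_{x_1}$ and $v_{x_1x_1}$ feed back into the evolution of $\phi_{x_1}$ and $\phi_{x_1x_1}$ through (\ref{phiphiphitxlol}) and (\ref{phiphiphitxxlol}), so closure of the Gronwall loop must be performed on the joint system $(\phi,v)$ rather than on each quantity separately; a secondary difficulty is that isolating a logarithmic, rather than polynomial, factor in $\phi_{tx_1x_1}$ requires tracking the partial cancellation between $A$ and $B$ in (\ref{vvvxx}) carefully.
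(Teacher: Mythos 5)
Your proposal follows essentially the same route as the paper: a bootstrap on $\phi_{x_1}$ driven by Gr\"onwall estimates on the Volterra system coupling $v_{x_1}(\phi)$ and $\phi_{x_1}$, the observation that the $O(t)$ contributions from $g$ are dominated by $\chi'\sim -|\ln\varepsilon|^\alpha$ on $[c_0,\varepsilon]$ because $t_\varepsilon \lesssim |\ln\varepsilon|^{-\alpha}$, a second-order Taylor expansion of $\phi_{x_1}$ at the minimum $(t_\varepsilon,\nu_\varepsilon^1,\nu_\varepsilon^2)$ to obtain (\ref{estestestphix}), a Gr\"onwall closure on $|v_{x_1x_1}|\leq C_\varepsilon/\phi_{x_1}^3$, and then term-by-term bounding of $A$ and $B$. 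Your remark that the logarithmic bound on $\phi_{tx_1x_1}$ rests on cancellation between singular contributions is the correct mechanism: $\partial_{x_1}^2\bigl(v(\phi)\bigr)=\phi_{x_1}^2 v_{x_1x_1}(\phi)+\phi_{x_1x_1}v_{x_1}(\phi)$ kills the $B$-type singularity and leaves only $\chi''$ plus integrals controlled to $|\ln(t_\varepsilon-t)|$, which the paper uses implicitly though it does not spell it out. One small imprecision: you justify strict positive-definiteness of the spatial Hessian of $\phi_{x_1}$ by the non-vanishing of $\phi_{x_2x_1x_1}$ and $\phi_{x_1x_1x_1}$, but non-vanishing of a diagonal and the off-diagonal entry is neither necessary nor sufficient for $4C_{\varepsilon,1}C_{\varepsilon,2}-C_{\varepsilon,3}^2>0$; the paper simply postulates $C_{\varepsilon,3}^2/(C_{\varepsilon,1}C_{\varepsilon,2})\leq\delta<1$ as an additional genericity hypothesis in (\ref{taylortaylor2}), so you should treat it the same way rather than deduce it from the third-derivative assumptions.
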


\begin{proof}

We hence assume $|\phi_{x_1}| \leq 2$.

We have from (\ref{intvvv}) and (\ref{uniff}) that 
\begin{equation}\label{vbornee}
|v(\phi(t,x_1))| \leq |\chi(x_1)| + Ct \leq C_\varepsilon.
\end{equation}

Also, we have from (\ref{intvvv}) that for $t<t_1$, where $t_1$ only depends on $f$, that

\begin{equation}\label{signedev}
v(t,x_1,x_2) \leq 1/2.
\end{equation}

Hence we have from (\ref{defphiproblem3}), (\ref{signedev}) and (\ref{vbornee}) that

\begin{equation}
|\phi_t| = \frac{\left| 1 + v(t,\phi) \right|}{\left| 1-v(t,\phi) \right|} \leq C_\varepsilon.
\end{equation}

We will make use of Gr\"onwall's inequality in (\ref{vvvx}), we have 

\begin{equation}\label{ki}
\left| \phi_{x_1} v_{x_1} \right| \leq |\psi_\varepsilon(x_1)| |\ln(x_1)|^\alpha + 2 \frac{1}{|\ln(\ln(\varepsilon))|} C_g + \int_\tau C_g |\phi_{x_1} v_{x_1}|
\end{equation} 

and

\begin{equation}\label{grownki}
|v_{x_1} \phi_{x_1}| \leq \left( |\ln(\varepsilon)|^\alpha + C \right)  e^{\int_\tau |\partial_3 g|} \leq C \left| \ln(\varepsilon) \right|^\alpha e^{\frac{1}{|\ln(|\ln(\varepsilon)||}}.
\end{equation}

Now, looking at (\ref{vvvx}) again, we obtain

\begin{multline}\label{ceciestunlabel}
-\psi_\varepsilon(x_1) |\ln(x_1)|^\alpha + \int_\tau \phi_{x_1}(\tau,x_1,x_2) \partial_1 g + \int_\tau \phi_{x_1} v_{x_1} \partial_3 g \\ \leq -\frac{|\ln(\varepsilon)|^\alpha}{10} + 2 C_g \frac{1}{|\ln(|\ln(\varepsilon)|)|} + C_g \frac{1}{|\ln(|\ln(\varepsilon)|)|} C \left| \ln(\varepsilon) \right|^\alpha e^{\frac{1}{|\ln(|\ln(\varepsilon)|)|}} \leq 0,
\end{multline}

for $\varepsilon$ small enough. From (\ref{phiphiphitxlol}), we hence get that $\phi_{t,x_1} \leq 0$. 
Because we only consider times such that $\phi_{x_1} \neq 0$, (\ref{defphiproblem3}) gives by continuity

\begin{equation}\label{ki2}
\phi_{x_1} >0.
\end{equation} 

Hence $\phi_{x_1} \leq 1$ on the considered interval. This is the estimate we wanted for $\phi_{x_1}$ to make our bootstrap argument on $\phi_{x_1}$ work. We go on with the other estimates.

Using the same reasoning as for (\ref{ceciestunlabel}), we obtain

\begin{equation}\label{signedevx}
2 \frac{\chi'(x_1)}{\phi_{x_1}} \leq v_{x_1} \leq \frac{\chi'(x_1)}{2} \frac{1}{\phi_{x_1}} \leq 0.
\end{equation}

Now, from (\ref{vvvx}), (\ref{phiphiphitxlol}) and (\ref{signedevx})

\begin{equation}
\phi_{tx_1} \leq \frac{\chi'(x_1)}{2 (1-v)^2} < 0.
\end{equation}

This leads us to the following two conclusions. From (\ref{chilol}) and (\ref{psilol}), we have that $\chi'(\varepsilon) = -|\ln(\varepsilon)|^\alpha$.

Hence, for $\varepsilon$ small enough, there exists a time $t_\varepsilon$ satisfying (\ref{teps1}) and (\ref{teps2}). We also have

\begin{equation}\label{estblowup}
t_\varepsilon \leq \frac{4}{|\ln(\varepsilon)|^\alpha}.
\end{equation}

Since we have that $t_\varepsilon << \frac{1}{|\ln(|\ln(\varepsilon)|)|}$, our time $t_\varepsilon$ belongs to the set in which we made our estimate.

Now, because, by a continuity argument, we have that $\phi_{x_1} (t_\varepsilon,\nu_\varepsilon^1,\nu_\varepsilon^2)$ is a minimum in $(x_1,x_2)$, we have that $\phi_{x_1x_1} = \phi_{x_1x_2}=0$. A Taylor expansion leads to

\begin{equation}\label{taylortaylor}
\begin{aligned}
\phi_{x_1}(t_\varepsilon,x_1,x_2) &= C_{\varepsilon,1} (x_1-\nu_\varepsilon^1)^2 + C_{\varepsilon,2} (x_2 - \nu_\varepsilon^2)^2 + C_{\varepsilon,3}(x_1-\nu_\varepsilon^1)(x_2-\nu_\varepsilon^2) \\
&+ o(d((x_1,x_2),(\nu_\varepsilon^1,\nu_\varepsilon^2))^2) \\
&C_{\varepsilon,3}^2 \leq C_{\varepsilon,1} C_{\varepsilon,2}, \hspace{0.2cm} {C_\varepsilon^1}, {C_\varepsilon^2} > 0. 
\end{aligned}
\end{equation}

Now, from (\ref{taylortaylor}) and using $\sqrt{ab} \leq \frac{a+b}{2}$, we obtain

\begin{multline}\label{taylortaylor2}
\frac{C_{\varepsilon,3}^2}{ C_{\varepsilon,1} C_{\varepsilon,2}} \leq \delta <1 \Rightarrow \left| C_{\varepsilon,3}(x_1-\nu_\varepsilon^1)(x_2-\nu_\varepsilon^2) \right| \leq \delta \left| \sqrt{C_{\varepsilon,1}} (x_1 - \nu_\varepsilon^1) \sqrt{C_{\varepsilon,2}} (x_2 - \nu_\varepsilon^2) \right| \\
\leq \delta \left[ \frac{C_{\varepsilon,1}(x_1-\nu_\varepsilon^2)^2 + C_{\varepsilon,2}(x_2-\nu_\varepsilon^2)^2}{2} \right]  . 
\end{multline}

Now, from (\ref{taylortaylor}) and (\ref{taylortaylor2}),  we obtain

\begin{multline}
\frac{2-\delta}{2} C_{\varepsilon,1} (x_1-\nu_\varepsilon^1)^2 + \frac{2-\delta}{2} C_{\varepsilon,2} (x_2-\nu_\varepsilon^2)^2 \\
\leq C_{\varepsilon,1} (x_1-\nu_\varepsilon^1)^2 + C_{\varepsilon,2} (x_2 - \nu_\varepsilon^2)^2 + C_{\varepsilon,3}(x_1-\nu_\varepsilon^1)(x_2-\nu_\varepsilon^2) \\
\leq \frac{2+\delta}{2} C_{\varepsilon,1} (x_1-\nu_\varepsilon^1)^2 + \frac{2+\delta}{2} C_{\varepsilon,2} (x_2-\nu_\varepsilon^2)^2.
\end{multline}

Hence, using the sign of $\phi_{tx_1}$, we obtain estimate (\ref{estestestphix}).

Now, we deal with the terms $A$ and $B$ involved in (\ref{vvvxx}). We will go through each term one by one. With $0<\phi_{x_1}\leq 1$, we obtain

\begin{equation}
\left| \int_\tau \left( \phi_{x_1} (\tau,x_1) \right)^2 \partial_1^2 g(\tau,\phi) \right| \leq \int_\tau C \leq C.
\end{equation}

Using (\ref{grownki}) and $0<\phi_{x_1}\leq 1$, we get

\begin{equation}
\left| \int_\tau (\phi_{x_1})^2 v_{x_1} (\tau,\phi) \partial_3\partial_1 g \right| \leq C \int_\tau (\phi_{x_1}) \left( \phi_{x_1} v_{x_1} \right) \leq C \int_\tau C(C+|\chi'|) \leq C (C+|\chi'|)
\end{equation}

\begin{equation}
\left| \int_\tau \left( \phi_{x_1} \right)^2 \left( v_{x_1} \right)^2 \partial_3^2 g \right| \leq \int_\tau |C(C+\chi')|^2 \leq C_\varepsilon.
\end{equation}

\begin{equation}
\left| \int_\tau \phi_{x_1x_1} v_{x_1} \partial_3 g \right| \leq C_\varepsilon \int_\tau \frac{C(C+|\chi'|)}{\phi_{x_1}} \leq C_\varepsilon |\ln(t_\varepsilon-t)|.
\end{equation}

Indeed, (\ref{estestestphix}) provides in particular that $\frac{1}{\phi_{x_1}} \geq \frac{1}{(t_\varepsilon-t)}$.

We hence obtain a first estimate for $A$ :

\begin{multline}\label{Apremier}
|A| \leq \frac{1}{(\phi_{x_1})^2} \left[ C_\varepsilon +C_\varepsilon |\ln(t_\varepsilon-t)| +\int_\tau |\phi_{x_1}^2 v_{x_1x_1} \partial_3 g| \right] \\
\leq \frac{1}{(\phi_{x_1})^2} \left[ C_\varepsilon +C_\varepsilon |\ln(t_\varepsilon-t)| + C_\varepsilon \int_\tau |\phi_{x_1}|^2 | v_{x_1x_1}| \right]
\end{multline}

We look at the terms involved in $B$. Using (\ref{grownki}) and $0<\phi_{x_1}\leq 1$ again, we obtain

\begin{equation}
\left| \int_\tau \phi_{x_1} \partial_1 g\right| \leq C_\varepsilon, \hspace{0.2cm} \left| \int_\tau \phi_{x_1} v_{x_1} \partial_3 g\right| \leq C_\varepsilon.
\end{equation}

This means that 

\begin{equation}\label{estdeuxieme}
|v_{x_1x_1}| \leq \frac{1}{\phi_{x_1}^2} \left[ C_\varepsilon |\ln(t_\varepsilon-t)| + C_\varepsilon \int_\tau |v_{x_1x_1}| \phi_{x_1}^2 \right] + \frac{C_\varepsilon}{\phi_{x_1}^3}. 
\end{equation}

Using Gr\"onwall's inequality once again in (\ref{estdeuxieme}), we obtain

\begin{equation}\label{esttroisieme}
|v_{x_1x_1}| \leq \frac{C_\varepsilon}{\phi_{x_1}^3}e^{\frac{C_\varepsilon}{\phi_{x_1}^2} \int_\tau \phi_{x_1}^2(\tau)} \leq \frac{C_\varepsilon}{\phi_{x_1}^3} e^{C_\varepsilon \int_\tau 1}\leq \frac{C_\varepsilon}{\phi_{x_1}^3}. 
\end{equation}

Now, inserting (\ref{esttroisieme}) back in (\ref{Apremier}), we obtain

\begin{equation}
|A| \leq \frac{C_\varepsilon}{\phi_{x_1}^2} \left[ \ln(t_\varepsilon-t)| + \int_\tau C_\varepsilon \frac{1}{\phi_{x_1}} \right] \leq \frac{C_\varepsilon |\ln(t_\varepsilon-t)|}{\phi_{x_1}^2},
\end{equation}

and hence we have proven (\ref{superestA}).

Now, we want to be more precise about the term involved in $B$. Using (\ref{estblowup}), and $0<\phi_x\leq 1$, we obtain

\begin{equation}\label{kk1}
\left| \int_\tau \phi_{x_1} \partial_1 g \right| \leq C \int_{\tau} |\phi_{x_1}| \leq C \frac{1}{|\ln(\varepsilon)|^\alpha} \rightarrow_\varepsilon 0.
\end{equation}

\begin{equation}\label{kk2}
\left| \int_\tau \phi_{x_1} v_{x_1} \partial_3 g \right| \leq C \int C(C+|\chi'|) \leq \frac{C(C+|\chi'|)}{|\ln(\varepsilon)|^\alpha} = o_\varepsilon(|\chi'|).
\end{equation}

Hence, for $\varepsilon$ small enough (the constants involved in (\ref{kk1}) and (\ref{kk2}) being absolute constants only depending on the equation), estimates (\ref{superestB}) holds.

\end{proof}

Now, we present a lemma that removes the assumption $x_1 \in [c_0,\varepsilon]$. Essentially, this will mean that $\nu_\varepsilon \in [c_0,\varepsilon]$. In fact, we need slightly more, we need that there is an open set $O= ]\nu_\varepsilon - \delta_\varepsilon, \nu_\varepsilon + \delta_\varepsilon[$ such that $O \cap [\varepsilon/2,c_0] = \emptyset$. This will ensure that all our estimates are valid when we will do the cutoff in the next step.

\begin{lemma}
For $x_1$ such that $\psi_\varepsilon(x_1) \leq \frac{1}{9}$, then $\nu_\varepsilon \neq x_1$. 
\end{lemma}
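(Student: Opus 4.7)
My plan is to show that if $\psi_\varepsilon(x_1) \leq 1/9$, then $\phi_{x_1}(t,x_1,x_2)$ stays strictly positive on $[0,t_\varepsilon]$ for every $x_2$. Since $\phi_{x_1}(t_\varepsilon,\nu_\varepsilon^1,\nu_\varepsilon^2) = 0$ by (\ref{teps2}), this forces $\nu_\varepsilon^1 \neq x_1$.

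The key input is the bound $|\chi'(x_1)| \leq (1/9)|\ln(\varepsilon)|^\alpha(1+o_\varepsilon(1))$. Indeed, since $\psi_\varepsilon$ vanishes on $[0,\varepsilon/2)$ and is non-decreasing (as we chose in the preamble of Lemma \ref{laconditionimportante}), the assumption $\psi_\varepsilon(x_1) \leq 1/9$ leaves two possibilities: either $x_1 < \varepsilon/2$, in which case $\chi'(x_1)=0$ by (\ref{chilol})--(\ref{psilol}); or $x_1 \in [\varepsilon/2,\varepsilon]$, in which case
\[
|\chi'(x_1)| = \psi_\varepsilon(x_1)|\ln(x_1)|^\alpha \leq \frac{1}{9}|\ln(\varepsilon/2)|^\alpha = \frac{1}{9}|\ln(\varepsilon)|^\alpha(1+o_\varepsilon(1)).
\]

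Next, I run the same Grönwall argument used in the proof of Lemma \ref{laconditionimportante}: from (\ref{vvvx}), the bootstrap $|\phi_{x_1}|\leq 1$, and the uniform bound $|\partial_i g|\leq C_g$ from (\ref{uniff}), I obtain
\[
|\phi_{x_1} v_{x_1}(\phi)|(t) \leq \bigl(|\chi'(x_1)| + 2 C_g t\bigr)e^{C_g t}
\leq \frac{1}{9}|\ln(\varepsilon)|^\alpha (1+o_\varepsilon(1))
\]
for $t \leq t_\varepsilon \leq 4/|\ln(\varepsilon)|^\alpha$ by (\ref{estblowup}), since the source-term contribution $2C_g t_\varepsilon$ is $o(|\ln(\varepsilon)|^\alpha)$ and $e^{C_g t_\varepsilon}=1+o_\varepsilon(1)$. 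Using $|v|\leq |\chi(x_1)|+C t_\varepsilon = o_\varepsilon(1)$ (from (\ref{vbornee})) together with the identity (\ref{phiphiphitxlol}),
\[
|\phi_{tx_1}(t,x_1,x_2)| = \frac{2|\phi_{x_1}v_{x_1}(\phi)|}{(1-v)^2} \leq \frac{2}{9}|\ln(\varepsilon)|^\alpha(1+o_\varepsilon(1)).
\]
Integrating from $\phi_{x_1}(0,x_1,x_2)=1$ over $[0,t_\varepsilon]$ yields
\[
|\phi_{x_1}(t_\varepsilon,x_1,x_2)-1|
\leq t_\varepsilon\cdot \frac{2}{9}|\ln(\varepsilon)|^\alpha(1+o_\varepsilon(1))
\leq \frac{8}{9}(1+o_\varepsilon(1)),
\]
so $\phi_{x_1}(t_\varepsilon,x_1,x_2) \geq 1/9 - o_\varepsilon(1) > 0$ for $\varepsilon$ small, uniformly in $x_2$.

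The only delicate point is bookkeeping: the threshold $1/9$ is dictated by the product $2\cdot 4\cdot (1/9) = 8/9 < 1$, where the factor $2$ comes from (\ref{phiphiphitxlol}) and the factor $4$ from (\ref{estblowup}). One must verify that all $o_\varepsilon(1)$ contributions (from $|\ln(\varepsilon/2)|^\alpha$ vs.\ $|\ln(\varepsilon)|^\alpha$, from the Grönwall exponential, from $(1-v)^{-2}$, and from the $2C_g t$ source term) remain strictly smaller than the $1/9$ margin, which is automatic for $\varepsilon$ sufficiently small.
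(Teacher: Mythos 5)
Your proof is correct and follows the same route as the paper: bound $|\chi'(x_1)| \leq \tfrac{1}{9}|\ln\varepsilon|^\alpha(1+o_\varepsilon(1))$ from the hypothesis $\psi_\varepsilon(x_1)\leq 1/9$ and the monotonicity of $\psi_\varepsilon$, propagate this via the Gr\"onwall estimate (\ref{grownki}) to $|\phi_{x_1}v_{x_1}|$, pass to $\phi_{tx_1}$ through (\ref{phiphiphitxlol}), and integrate from $\phi_{x_1}(0,\cdot)=1$ over $[0,t_\varepsilon]$ using $t_\varepsilon\leq 4/|\ln\varepsilon|^\alpha$ from (\ref{estblowup}). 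In fact your bookkeeping is tighter than the paper's: the paper writes $\phi_{x_1}\geq 1-t\,|\ln\varepsilon|^\alpha/8\geq 1/2$, which tacitly drops the factor $2/(1-v)^2=2(1+o_\varepsilon(1))$ from (\ref{phiphiphitxlol}); once that factor is retained (as you do), the surviving margin is $1/9-o_\varepsilon(1)$ rather than $1/2$, which is still strictly positive and suffices for the conclusion $\nu_\varepsilon\neq x_1$.
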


\begin{proof}

To show this, we will use the estimate (\ref{estblowup}) and show that $\phi_{x_1} (x_1)$ can not reach $0$ during that time. We consider $x_1$ such that $\psi_\varepsilon(x_1) \leq \frac{1}{9}$. By the same Gr\"onwall estimate as in the previous proof, we obtain 

\begin{equation}
|\phi_{x_1} v_{x_1}| \leq (C+ \frac{|\ln(\varepsilon)|^\alpha}{9} )e^{\int_\tau \partial_3 g} \leq \frac{|\ln(\varepsilon)|^\alpha}{8},
\end{equation}

for $\varepsilon$ small enough. Now, because $(\phi_{x_1})_{|t=0} =1$, we have that for all $t\leq \frac{4}{|\ln(\varepsilon)|^\alpha}$, 

\begin{equation}
\phi_{x_1}(x_1) \geq 1 - t \frac{|\ln(\varepsilon)|^\alpha}{8} \geq 1/2.
\end{equation}

\end{proof}

\begin{remark}
Informally, we now have that 
\begin{equation*}
v_{x_1x_1} \simeq \frac{C_\varepsilon |\ln(t_\varepsilon-t)|}{\phi_{x_1}^2} - \frac{C_\varepsilon \chi' \phi_{x_1x_1}}{\phi_{x_1}^3}.
\end{equation*}

The $\simeq$ symbols being an upper-bound for the first term, and control of the behavior for the second term. The situation is hence closer to the one we had in the two previous cases. However, the estimates that we will now use for $\phi_{x_1x_1}$ and $\phi_{x_1}$ now depends on $x_2$, and the estimation will be harder, especially the control of the sign.

\end{remark}

\subsection{Proof of the blow-up as $t\rightarrow t_\varepsilon$}

We now consider the following cutoffs in $x_1$ and $x_2$.

Let $\psi_{\varepsilon}^1: \mathbb{R} \to \mathbb{R}$ be a $C^\infty$ function satisfying

\begin{equation}\label{psi1lol}
\left\{
\begin{aligned}
&\psi_\varepsilon^1(x) = 1 \text{ for } \phi(t_\varepsilon,\nu_\varepsilon) - \delta_\varepsilon <x<\phi(t_\varepsilon,\nu_\varepsilon) + \delta_\varepsilon \\
&\psi_\varepsilon^1(x) = 0 \text{ for } \phi(t_\varepsilon,\nu_\varepsilon) + 2 \delta_\varepsilon <x \text{ or } x<\phi(t_\varepsilon,\nu_\varepsilon) - 2 \delta_\varepsilon \\
&0 < \psi_\varepsilon^1(x) < 1 \text{ elsewhere},
\end{aligned}
\right.
\end{equation}

and $\psi_{\varepsilon}^2: \mathbb{R} \to \mathbb{R}$ be a $C^\infty$ function satisfying

\begin{equation}\label{troisiemecaspsi2}
\left\{
\begin{aligned}
&\psi_\varepsilon^2(x) = 1 \text{ for } - \delta_\varepsilon <x< \delta_\varepsilon \\
&\psi_\varepsilon^2(x) = 0 \text{ for }  2 \delta_\varepsilon <x \text{ or } x< - 2 \delta_\varepsilon \\
&0 < \psi_\varepsilon^2(x) < 1 \text{ elsewhere}.
\end{aligned}
\right.
\end{equation}

We define $h(t,x_1,x_2) = \psi_\varepsilon^1(x_1) \psi_\varepsilon^2(x_2) v(t,x_1,x_2)$.
We will show that $||h||_{\dot H_{x_1}^{7/4}(\mathbb{R}^2)}\rightarrow \infty$ as $t\rightarrow t_\varepsilon$, and use the fact that

\begin{equation}
I_\varepsilon (t) = ||\psi_\varepsilon^1 \psi_\varepsilon^2 v||_{\dot H_{x_1}^{7/4}(\ln H)^{-\lambda}} = \int_{x_2} \int_{x_1} \int_y \frac{1}{|x_1-y|^{1/2-2\lambda}} h_{x_1x_1} (t,x_1,x_2) h_{x_1x_1} (t,y,x_2). 
\end{equation}

This will show theorem \ref{troisiemecas}.

\begin{proof}

We first split the integral in $x_1$ in three domains, 

\begin{multline}
I_\varepsilon(t)
= \int_{x_2=\nu_\varepsilon^2 -2\delta_\varepsilon}^{\nu_\varepsilon^2+ 2\delta_\varepsilon} \psi_\varepsilon^2(x_2) \int_{x_1=\phi(t_\varepsilon,\nu_\varepsilon) - 2 \delta_\varepsilon }^{\phi(t_\varepsilon,\nu_\varepsilon) - \delta_\varepsilon } \left( \frac{\partial^2 (v \psi_{\varepsilon}^1)}{\partial x_1^2} \right)(t,x_1) \\
\cdot  \int_{y = \phi(t_\varepsilon,\nu_\varepsilon) - 2 \delta_\varepsilon }^{\phi(t_\varepsilon,\nu_\varepsilon) + 2 \delta_\varepsilon } |x_1-y|^{-1/2-2\lambda} \left( \frac{\partial^2 (v \psi_{\varepsilon}^1)}{\partial x_1^2} \right)(t,y) dy dx_1 \\
+ \int_{x_2=\nu_\varepsilon^2 -2\delta_\varepsilon}^{\nu_\varepsilon^2+ 2\delta_\varepsilon} \psi_\varepsilon^2(x_2) \int_{x_1=\phi(t_\varepsilon,\nu_\varepsilon) - \delta_\varepsilon }^{\phi(t_\varepsilon,\nu_\varepsilon) + \delta_\varepsilon } \left( \frac{\partial^2 v}{\partial x_1^2} \right)(t,x_1)\\ \cdot  \int_{y = \phi(t_\varepsilon,\nu_\varepsilon) - 2 \delta_\varepsilon }^{\phi(t_\varepsilon,\nu_\varepsilon) + 2 \delta_\varepsilon } |x_1-y|^{-1/2-2\lambda} \left( \frac{\partial^2 v}{\partial x_1^2} \right)(t,y) dy dx_1 \\
+ \int_{x_2=\nu_\varepsilon^2 -2\delta_\varepsilon}^{\nu_\varepsilon^2+ 2\delta_\varepsilon} \psi_\varepsilon^2(x_2) \int_{x_1=\phi(t_\varepsilon,\nu_\varepsilon) + \delta_\varepsilon }^{\phi(t_\varepsilon,\nu_\varepsilon) + 2 \delta_\varepsilon } \left( \frac{\partial^2 (v \psi_{\varepsilon}^1)}{\partial x_1^2} \right)(t,x_1) \\ \cdot  \int_{y = \phi(t_\varepsilon,\nu_\varepsilon) - 2 \delta_\varepsilon }^{\phi(t_\varepsilon,\nu_\varepsilon) + 2 \delta_\varepsilon }|x_1-y|^{-1/2-2\lambda} \left( \frac{\partial^2 (v \psi_{\varepsilon}^1)}{\partial x_1^2} \right)(t,y) dy dx_1. \\
= \int_{x_2=\nu_\varepsilon^2 -2\delta_\varepsilon}^{\nu_\varepsilon^2+ 2\delta_\varepsilon} \psi_\varepsilon^2(x_2) I_\varepsilon^1(t,x_2) + \int_{x_2=\nu_\varepsilon^2 -2\delta_\varepsilon}^{\nu_\varepsilon^2+ 2\delta_\varepsilon} \psi_\varepsilon^2(x_2) I_\varepsilon^2(t,x_2) + \int_{x_2=\nu_\varepsilon^2 -2\delta_\varepsilon}^{\nu_\varepsilon^2+ 2\delta_\varepsilon} \psi_\varepsilon^2(x_2) I_\varepsilon^3(t,x_2).
\end{multline}

We now consider a fixed $x_2$ and study the integrals $I_\varepsilon^1(t,x_2)$, $I_\varepsilon^2(t,x_2)$ and $I_\varepsilon^3(t,x_2)$. 

Let us make the change of variable $x_1 = \phi(t,x_1,x_2)$ (not explicitly relabeled).

\begin{equation}\label{troisiemecaslesI}
\begin{aligned}
I_\varepsilon(t,x_2)
=  &\int_{x_1=\zeta_\varepsilon^1(t,x_2)}^{\zeta_\varepsilon^2(t,x_2)}  \phi_{x_1}(t,x_1,x_2) \left( \frac{\partial^2 (v \psi_{\varepsilon}^1)}{\partial x_1^2} \right)(t,\phi(t,x_1,x_2))\\
 \cdot  \int_{y = \zeta_\varepsilon^1(t,x_2)}^{\zeta_\varepsilon^4(t,x_2)}& |\phi(t,x_1)- \phi(t,y,x_2)|^{-1/2+2\lambda} \phi_{x_1}(t,y,x_2) \left( \frac{\partial^2 (v \psi_{\varepsilon}^1)}{\partial x_1^2} \right)(t,\phi(t,y,x_2)) dx_1 dy \\
+  \int_{x_1=\zeta_\varepsilon^2(t,x_2)}^{\zeta_\varepsilon^3(t,x_2)} & \phi_{x_1}(t,x_1,x_2) \left( \frac{\partial^2 (v)}{\partial x_1^2} \right)(t,\phi(t,x_1,x_2))\\
 \cdot  \int_{y = \zeta_\varepsilon^1(t,x_2)}^{\zeta_\varepsilon^4(t,x_2)}& |\phi(t,y,x_2)- \phi(t,y,x_2)|^{-1/2+2\lambda} \phi_{x_1}(t,y,x_2) \left( \frac{\partial^2 (v)}{\partial x_1^2} \right)(t,\phi(t,y,x_2)) dx_1 dy \\
+  \int_{x_1=\zeta_\varepsilon^3(t,x_2)}^{\zeta_\varepsilon^4(t,x_2)} & \phi_{x_1}(t,x_1,x_2) \left( \frac{\partial^2 (v \psi_{\varepsilon}^1)}{\partial x_1^2} \right)(t,\phi(t,x_1,x_2))\\
 \cdot  \int_{y = \zeta_\varepsilon^1(t,x_2)}^{\zeta_\varepsilon^4(t,x_2)}& |\phi(t,x_1)- \phi(t,y,x_2)|^{-1/2+2\lambda} \phi_{x_1}(t,y,x_2) \left( \frac{\partial^2 (v \psi_{\varepsilon}^1)}{\partial x_1^2} \right)(t,\phi(t,y,x_2)) dx_1 dy \\ \\
&= I_\varepsilon^1(t,x_2) + I_\varepsilon^2(t,x_2) + I_\varepsilon^3(t,x_2).
\end{aligned}
\end{equation}

We will use all the following results without proving it, as it is very similar to what we obtained in the previous chapter.

\begin{lemma}

The following holds on an interval centered at $(t_\varepsilon,\nu_\varepsilon^1,\varepsilon^2)$.

If we define 
\begin{equation}
\begin{aligned}
&\zeta_\varepsilon^1(t,x_2) = \phi^{-1}_{t}(\phi(t_\varepsilon,\nu_\varepsilon^1,\nu_\varepsilon^2)-2\delta_\varepsilon,x_2),\\
&\zeta_\varepsilon^2(t,x_2) = \phi^{-1}_{t}(\phi(t_\varepsilon,\nu_\varepsilon^1,\nu_\varepsilon^2)-\delta_\varepsilon,x_2),\\
&\zeta_\varepsilon^3(t,x_2) = \phi^{-1}_{t}(\phi(t_\varepsilon,\nu_\varepsilon^1,\nu_\varepsilon^2)+\delta_\varepsilon,x_2),\\
&\zeta_\varepsilon^4(t,x_2) = \phi^{-1}_{t}(\phi(t_\varepsilon,\nu_\varepsilon^1,\nu_\varepsilon^2)+2\delta_\varepsilon,x_2),\\
\end{aligned}
\end{equation}

For any $\eta_\varepsilon$, we can choose $\delta_\varepsilon$ and $t_\varepsilon^1$ such that for $t\in ]t_\varepsilon^1, t_\varepsilon[$, we have

\begin{equation}\label{leszetaslol}
\nu_\varepsilon- \eta_\varepsilon < \zeta_\varepsilon^1(t,x_2) < \zeta_\varepsilon^2(t,x_2) < \nu_\varepsilon < \zeta_\varepsilon^3(t,x_2) < \zeta_\varepsilon^4(t,x_2) < \nu_\varepsilon + \eta_\varepsilon.
\end{equation}

There exists $\zeta_\varepsilon^{2+}$ and $\zeta_\varepsilon^{3-}$ such that for $\delta_\varepsilon$ small enough, and $t$ close enough to $t_\varepsilon$, 

\begin{equation}\label{order2lol}
\nu_\varepsilon- \eta_\varepsilon < \zeta_\varepsilon^1(t,x_2) < \zeta_\varepsilon^2(t,x_2) <\zeta_\varepsilon^{2+} < \nu_\varepsilon < \zeta_\varepsilon^{3-} < \zeta_\varepsilon^3(t,x_2) < \zeta_\varepsilon^4(t,x_2) < \nu_\varepsilon + \eta_\varepsilon.
\end{equation}

\end{lemma}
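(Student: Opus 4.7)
The proof plan is to exploit that at $(t_\varepsilon, \nu_\varepsilon^1, \nu_\varepsilon^2)$ the map $x_1 \mapsto \phi(t_\varepsilon, x_1, \nu_\varepsilon^2)$ has $\phi_{x_1}$ vanishing quadratically (by the Taylor expansion (\ref{taylortaylor}) and the estimate (\ref{estestestphix})), while for $t < t_\varepsilon$ we have $\phi_{x_1}(t, \cdot, x_2) > 0$, so $\phi(t, \cdot, x_2)$ is a strictly increasing diffeomorphism on the relevant range and $\phi^{-1}_t(\cdot, x_2)$ is well defined. The $\zeta_\varepsilon^i$'s inherit the ordering of their arguments, so the ordering $\zeta_\varepsilon^1 < \zeta_\varepsilon^2 < \zeta_\varepsilon^3 < \zeta_\varepsilon^4$ is automatic from $-2\delta_\varepsilon < -\delta_\varepsilon < \delta_\varepsilon < 2\delta_\varepsilon$, and their separation $|\zeta_\varepsilon^{i+1} - \zeta_\varepsilon^i|$ is controlled below by $\delta_\varepsilon/\sup \phi_{x_1}$ and above by $\delta_\varepsilon/\inf \phi_{x_1}$.

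First I will show that the ``center'' preimage $\phi^{-1}_t(\phi(t_\varepsilon,\nu_\varepsilon^1,\nu_\varepsilon^2), x_2)$ tends to $\nu_\varepsilon^1$ as $(t, x_2) \to (t_\varepsilon, \nu_\varepsilon^2)$: indeed $\phi$ is continuous in all three variables, so by the equality $\phi(t_\varepsilon, \nu_\varepsilon^1, \nu_\varepsilon^2) = \phi(t_\varepsilon, \nu_\varepsilon^1, \nu_\varepsilon^2)$ and the monotonicity of $\phi(t, \cdot, x_2)$ for $t < t_\varepsilon$, the preimage of that value is close to $\nu_\varepsilon^1$. Then, given $\eta_\varepsilon > 0$, the estimate (\ref{estestestphix}) together with integration in $x_1$ produces a modulus of continuity for $\phi^{-1}_t$ near this center: the spread $|\zeta_\varepsilon^i - \nu_\varepsilon^1|$ is bounded by a quantity that goes to $0$ with $\delta_\varepsilon$. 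Picking $\delta_\varepsilon$ small and $t_\varepsilon^1$ sufficiently close to $t_\varepsilon$ therefore forces all four $\zeta_\varepsilon^i$ inside $(\nu_\varepsilon^1 - \eta_\varepsilon, \nu_\varepsilon^1 + \eta_\varepsilon)$, giving (\ref{leszetaslol}).

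For the strict separation (\ref{order2lol}), the key observation is that at the critical time the third-order behavior
\begin{equation}
\phi(t_\varepsilon, y, \nu_\varepsilon^2) - \phi(t_\varepsilon, \nu_\varepsilon^1, \nu_\varepsilon^2) = \int_{\nu_\varepsilon^1}^{y} \phi_{x_1}(t_\varepsilon, s, \nu_\varepsilon^2)\, ds
\end{equation}
satisfies, thanks to (\ref{taylortaylor}) at $t = t_\varepsilon$, a bound of the form $C_\varepsilon^1 (y-\nu_\varepsilon^1)^3 \leq |\phi(t_\varepsilon, y, \nu_\varepsilon^2) - \phi(t_\varepsilon,\nu_\varepsilon^1,\nu_\varepsilon^2)| \leq C_\varepsilon^2 (y-\nu_\varepsilon^1)^3$ on a neighborhood of $\nu_\varepsilon^1$. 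Inverting this gives, at $(t_\varepsilon, \nu_\varepsilon^2)$, the quantitative bound $|\zeta_\varepsilon^i(t_\varepsilon,\nu_\varepsilon^2) - \nu_\varepsilon^1| \geq (\delta_\varepsilon/C_\varepsilon^2)^{1/3}$ for $i=2,3$. Choosing $\zeta_\varepsilon^{2+} := \nu_\varepsilon^1 - \tfrac{1}{2}(\delta_\varepsilon/C_\varepsilon^2)^{1/3}$ and $\zeta_\varepsilon^{3-} := \nu_\varepsilon^1 + \tfrac{1}{2}(\delta_\varepsilon/C_\varepsilon^2)^{1/3}$, the strict inequalities at $(t_\varepsilon, \nu_\varepsilon^2)$ persist by continuity of $\zeta_\varepsilon^2, \zeta_\varepsilon^3$ in $(t, x_2)$ on a small enough neighborhood, yielding (\ref{order2lol}) for $t$ sufficiently close to $t_\varepsilon$ (uniformly in $x_2$ near $\nu_\varepsilon^2$, which is the regime enforced by the cutoff $\psi_\varepsilon^2$).

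The main obstacle is the dependence on $x_2$: the earlier preliminaries established the quadratic vanishing of $\phi_{x_1}$ only at $(t_\varepsilon, \nu_\varepsilon^1, \nu_\varepsilon^2)$, and for $x_2 \neq \nu_\varepsilon^2$ we need that $\phi_{x_1}(t_\varepsilon, \cdot, x_2)$ stays positive (or the corresponding expansion remains valid) on an open neighborhood of $\nu_\varepsilon^1$. This is exactly what (\ref{estestestphix}) provides, because it supplies a uniform lower bound on $\phi_{x_1}$ of the form $C_\varepsilon^1\bigl[(x_1-\nu_\varepsilon^1)^2 + (x_2-\nu_\varepsilon^2)^2 + (t_\varepsilon-t)\bigr]$; integrating this in $x_1$ and inverting preserves both the ordering and the third-root separation, with constants depending continuously on $x_2 - \nu_\varepsilon^2$. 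Shrinking the $x_2$-neighborhood and the $t$-neighborhood until the constants in (\ref{estestestphix}) are uniformly comparable to their values at $(t_\varepsilon, \nu_\varepsilon^2)$ therefore closes the argument.
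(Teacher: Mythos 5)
The paper does not actually give a proof of this lemma; it simply states ``We will use all the following results without proving it, as it is very similar to what we obtained in the previous chapter,'' so there is no in-paper argument to compare against. With that caveat, your strategy is the natural one and I believe it is correct: monotonicity of $x_1 \mapsto \phi(t,x_1,x_2)$ for $t<t_\varepsilon$ gives a well-defined, order-preserving $\phi_t^{-1}$, hence the ordering in (\ref{leszetaslol}); the two-sided bound (\ref{estestestphix}) on $\phi_{x_1}$, integrated in $x_1$, yields $|\phi(t,y,x_2) - \phi(t,\nu_\varepsilon^1,x_2)| \asymp |y-\nu_\varepsilon^1|^3 + \bigl((x_2-\nu_\varepsilon^2)^2+(t_\varepsilon-t)\bigr)|y-\nu_\varepsilon^1|$, whose inversion gives both the clustering within $\eta_\varepsilon$ and the cube-root lower bound on $|\zeta_\varepsilon^{2,3} - \nu_\varepsilon^1|$ that furnishes $\zeta_\varepsilon^{2\pm}$.

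The one place you are slightly too casual is the passage from the estimate at $(t_\varepsilon,\nu_\varepsilon^2)$ to the uniform statement for nearby $(t,x_2)$; invoking ``continuity'' is not quite enough because the shift $\phi(t,\nu_\varepsilon^1,x_2) - \phi(t_\varepsilon,\nu_\varepsilon^1,\nu_\varepsilon^2)$ and the perturbing term $\bigl((x_2-\nu_\varepsilon^2)^2+(t_\varepsilon-t)\bigr)|y-\nu_\varepsilon^1|$ in the integrated bound both compete with $\delta_\varepsilon$, so the constants in your modulus of continuity are not $\delta_\varepsilon$-independent without saying why. The fix is exactly the one you gesture at: on the support of $\psi_\varepsilon^2$ one has $|x_2-\nu_\varepsilon^2|\le 2\delta_\varepsilon$, and choosing $t_\varepsilon^1$ with $t_\varepsilon - t_\varepsilon^1 \ll \delta_\varepsilon^{2/3}$ makes the shift $O(\delta_\varepsilon)$ (hence absorbed by replacing $\delta_\varepsilon$ with $\delta_\varepsilon/2$) and makes the term $\bigl((x_2-\nu_\varepsilon^2)^2+(t_\varepsilon-t)\bigr)|y-\nu_\varepsilon^1|$ subdominant to $|y-\nu_\varepsilon^1|^3$ whenever $|y-\nu_\varepsilon^1|\gtrsim \delta_\varepsilon^{1/3}$, so the cube-root bound survives uniformly. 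Spelling out this balance would close the only gap; the rest of the argument is sound.
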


We first study $I_\varepsilon^1(t,x_2)$. The case of $I_\varepsilon^3(t,x_2)$ is similar. 
Using the expression of $I_\varepsilon^1(t,x_2)$ provided in (\ref{troisiemecaslesI}), as well as (\ref{superestA}), (\ref{estestestphix}) and (\ref{superestB}), we obtain

\begin{multline}\label{lel1}
\left| I_\varepsilon^1(t,x_2) \right| \leq \int_{x_1=\nu_\varepsilon^1 - \eta_\varepsilon}^{\zeta_\varepsilon^{2+}} \phi_{x_1}(t,x_1,x_2) \left( |A| + |B| \right)(x_1) \int_{y=\nu_\varepsilon^1 - \eta_\varepsilon}^{\nu_\varepsilon^1 + \eta_\varepsilon} \frac{\phi_{x_1}(t,y,x_2) \left( |A| + |B| \right)(y)}{|\phi(t,x_1,x_2) - \phi(t,y,x_2)|^{1/2-2\lambda}}.\\
\left| I_\varepsilon^1(t,x_2) \right| \leq \int_{x_1=\nu_\varepsilon^1 - \eta_\varepsilon}^{\zeta_\varepsilon^{2+}} \left( \frac{C_\varepsilon |\ln(t_\varepsilon-t)| }{\phi_{x_1}(x_1)} + \frac{C_\varepsilon |\phi_{x_1x_1}(t,x_1,x_2)|}{\phi_{x_1}^2(x_1)} \right) \\
\cdot  \int_{y=\nu_\varepsilon^1 - \eta_\varepsilon}^{\nu_\varepsilon^1 + \eta_\varepsilon} \left( \frac{C_\varepsilon |\ln(t_\varepsilon-t)| }{\phi_{x_1}(y)} + \frac{C_\varepsilon |\phi_{x_1x_1}(t,y,x_2)| }{\phi_{x_1}^2(y)} \right) \\
\cdot \frac{1}{|\phi(t,x_1,x_2) - \phi(t,y,x_2)|^{1/2-2\lambda}}.\\
\end{multline}

On the considered domain for $x_1$, there is a constant depending only on $\varepsilon$ such that

\begin{equation}
\frac{C_\varepsilon |\ln(t_\varepsilon-t)| }{\phi_{x_1}(x_1)} + \frac{C_\varepsilon |\phi_{x_1x_1}(t,x_1,x_2)|}{\phi_{x_1}^2(x_1)} \leq C_\varepsilon |\ln(t_\varepsilon-t)|.
\end{equation}

Now, we split the domain of the second integral involved in (\ref{lel1}) into two parts. The first domain corresponding to the unboundedness of $\frac{1}{|\phi(t,x_1,x_2) - \phi(t,y,x_2)|^{1/2-2\lambda}}$, the second to the unboundedness of $\frac{1}{\phi_{x_1}}$. 

\begin{multline}
|I_\varepsilon^1(t,x_2)| \leq M_\varepsilon \int_{x_1=\nu_\varepsilon^1 - \eta_\varepsilon}^{\zeta_\varepsilon^{2+}} \int_{y=\nu_\varepsilon^1-\eta_\varepsilon}^{\zeta_\varepsilon^{2+}} \frac{|\ln(t_\varepsilon-t)|^2}{|\phi(t,x_1,x_2) - \phi(t,y,x_2)|^{1/2-2\lambda}} \\
+ M_\varepsilon \int_{x_1=\nu_\varepsilon^1 - \eta_\varepsilon}^{\zeta_\varepsilon^{2+}} \int_{y=\zeta_\varepsilon^{2+}}^{\nu_\varepsilon^1+\eta_\varepsilon} \frac{ |\ln(t_\varepsilon-t)|^2}{(y-\nu_\varepsilon^1)^2 + (x_2-\nu_\varepsilon^2)^2 + (t_\varepsilon-t)}\\
+M_\varepsilon \int_{x_1=\nu_\varepsilon^1 - \eta_\varepsilon}^{\zeta_\varepsilon^{2+}} \int_{y=\zeta_\varepsilon^{2+}}^{\nu_\varepsilon^1+\eta_\varepsilon} \frac{|\ln(t_\varepsilon-t)|((t_\varepsilon-t) + |x_2-\nu_\varepsilon^2| + |x_1-\nu_\varepsilon^1|)}{ \left( (y-\nu_\varepsilon^1)^2 + (x_2-\nu_\varepsilon^2)^2 + (t_\varepsilon-t)\right)^2}\\
\leq (i) + (ii) + (iii).
\end{multline}

Now, using the mean value theorem and (\ref{estestestphix}), we obtain (for a $c\in[x_1,y] \subseteq [\nu_\varepsilon^1 - \eta_\varepsilon,\zeta_\varepsilon^{2+}]$) for $(i)$

\begin{equation}
(i) \leq M_\varepsilon \int_{x_1=\nu_\varepsilon^1 - \eta_\varepsilon}^{\zeta_\varepsilon^{2+}} \int_{y=\nu_\varepsilon^1-\eta_\varepsilon}^{\zeta_\varepsilon^{2+}} \frac{|\ln(t_\varepsilon-t)|^2}{|\phi_{x_1}(t,c,x_2)|^{1/2-2\lambda}|x_1 - y|^{1/2-2\lambda}} \leq M_\varepsilon \frac{|\ln(t_\varepsilon-t)|^2}{(t_\varepsilon-t)^{1/2-2\lambda}}.
\end{equation}

Now, for $(ii)$, we obtain

\begin{multline}
(ii) \leq M_\varepsilon |\ln(t_\varepsilon-t)|^2 \int_{x_1=\nu_\varepsilon^1}^{\zeta_\varepsilon^{2+}} \frac{1}{\left( (x_2-\nu_\varepsilon^2)^2 + (t_\varepsilon-t)\right)^{1/2}} \leq M_\varepsilon \frac{|\ln(t_\varepsilon-t)|^2}{\left( (x_2-\nu_\varepsilon^2)^2 + (t_\varepsilon-t)\right)^{1/2}}.
\end{multline}

For $(iii)$, we get

\begin{multline}
(iii) \leq M_\varepsilon |\ln(t_\varepsilon-t)| \int_{x_1=\nu_\varepsilon^1}^{\zeta_\varepsilon^{2+}}  \frac{1}{\left((x_2-\nu_\varepsilon^2)^2 + (t_\varepsilon-t)\right)^2} \leq M_\varepsilon \frac{|\ln(t_\varepsilon-t)|}{\left((x_2-\nu_\varepsilon^2)^2 + (t_\varepsilon-t)\right)^{3/2}}.
\end{multline}

Now, using (\ref{troisiemecaspsi2}), we get

\begin{multline}
\left| \int_{x_2=\nu_\varepsilon^2-\eta_\varepsilon}^{\nu_\varepsilon^2+\eta_\varepsilon} \psi_\varepsilon^2(x_2) I_\varepsilon^1(t,x_2) \right| \leq M_\varepsilon \int_{x_2=\nu_\varepsilon^2-\eta_\varepsilon}^{\nu_\varepsilon^2+\eta_\varepsilon} \Big[ |\ln(t_\varepsilon-t)|^2 + \frac{|\ln(t_\varepsilon-t)|^2}{\sqrt{(x_2-\nu_\varepsilon^2)^2 + (t_\varepsilon-t)}}\\
+ \frac{|\ln(t_\varepsilon-t)|}{\left( (x_2-\nu_\varepsilon^2)^2 + (t_\varepsilon-t)\right)^{3/2}} \Big]
\leq M_\varepsilon \frac{|\ln(t_\varepsilon-t)|^2}{(t_\varepsilon-t)}.
\end{multline}

By symmetry, we also have that

\begin{multline}
\left| \int_{x_2=\nu_\varepsilon^2-\eta_\varepsilon}^{\nu_\varepsilon^2+\eta_\varepsilon} \psi_\varepsilon^2(x_2) I_\varepsilon^3(t,x_2) \right| \leq M_\varepsilon \int_{x_2=\nu_\varepsilon^2-\eta_\varepsilon}^{\nu_\varepsilon^2+\eta_\varepsilon} \Big[ |\ln(t_\varepsilon-t)|^2 + \frac{|\ln(t_\varepsilon-t)|^2}{\sqrt{(x_2-\nu_\varepsilon^2)^2 + (t_\varepsilon-t)}}\\
+ \frac{|\ln(t_\varepsilon-t)|}{\left( (x_2-\nu_\varepsilon^2)^2 + (t_\varepsilon-t)\right)^{3/2}} \Big]
\leq M_\varepsilon \frac{|\ln(t_\varepsilon-t)|^2}{(t_\varepsilon-t)}.
\end{multline}

Now, we exhibit a lower bound for $I_\varepsilon^2(t,x_2)$. Using (\ref{superestA}). We consider the integral

\begin{multline}\label{lel3}
I_\varepsilon^2(t,x_2) = \int_{x_1= \zeta_\varepsilon^{2+}}^{\zeta_\varepsilon^{3-}} \int_{y=\nu_\varepsilon - \eta_\varepsilon}^{\nu_\varepsilon+\eta_\varepsilon} \frac{(A(x_1)-B(x_1))(A(y)-B(y))}{|\phi(t,x_1,x_2) - \phi(t,y,x_2)|^{1/2-2\lambda}} \\
= \int_{x_1= \zeta_\varepsilon^{2+}}^{\zeta_\varepsilon^{3-}} \int_{y=\nu_\varepsilon - \eta_\varepsilon}^{\nu_\varepsilon+\eta_\varepsilon} \frac{1}{|\phi(t,x_1,x_2) - \phi(t,y,x_2)|^{1/2-2\lambda}}\Big[ A(x_1)A(y) + A(x_1)B(y) \\
+ B(x_1)A(y) + B(x_1)B(y) \Big] = (i) + (ii) + (iii) + (iv).\\
\end{multline}

We will show that 

\begin{multline}
\int_{x_2=\nu_\varepsilon^2 - 2 \delta_\varepsilon}^{\nu_\varepsilon^2+2\delta_\varepsilon} \psi_\varepsilon(x_2) (iv) >> \int_{x_2=\nu_\varepsilon^2 - 2 \delta_\varepsilon}^{\nu_\varepsilon^2+2\delta_\varepsilon} \psi_\varepsilon(x_2)|(i)|+\int_{x_2=\nu_\varepsilon^2 - 2 \delta_\varepsilon}^{\nu_\varepsilon^2+2\delta_\varepsilon} \psi_\varepsilon(x_2)|(ii)|\\
+\int_{x_2=\nu_\varepsilon^2 - 2 \delta_\varepsilon}^{\nu_\varepsilon^2+2\delta_\varepsilon} \psi_\varepsilon(x_2)|(iii)|,
\end{multline}

as $t\rightarrow t_\varepsilon$.

We now make the additional change of variable $(x_1,y) = (x_1 - \nu_\varepsilon^1,y-\nu_\varepsilon^1)$, and thus the new domain is $\iota = [\zeta_\varepsilon^{2+} - \nu_\varepsilon^1,\zeta_\varepsilon^{3-}- \nu_\varepsilon^1] \times [-\eta_\varepsilon,\eta_\varepsilon].$
We first consider the term $(i)$ of (\ref{lel3}). We obtain from (\ref{estestestphix}) and (\ref{superestA}),

\begin{multline}\label{calcul(i)}
|(i)| \leq \int\int_\iota \frac{1}{|\phi(t,\nu_\varepsilon^1+x_1,x_2) - \phi(t,\nu_\varepsilon+y,x_2)|^{1/2-2\lambda}} \frac{C_\varepsilon |\ln(t_\varepsilon-t)|^2}{\phi_{x_1}(t,\nu_\varepsilon^1+x_1,x_2) \phi_{x_1}(t,\nu_\varepsilon^1+y,x_2)}\\
\leq C_\varepsilon \frac{|\ln(t_\varepsilon-t)|^2}{(t_\varepsilon-t)^{1/2-2\lambda}}\int\int_\iota \frac{1}{|x_1-y|^{1/2-2\lambda}} \frac{1}{x_1^2 + (x_2-\nu_\varepsilon^2)^2 + (t_\varepsilon-t)} \frac{1}{y^2 + (x_2-\nu_\varepsilon^2)^2 + (t_\varepsilon-t)}\\
\leq_{Holder}  \frac{C_\varepsilon|\ln(t_\varepsilon-t)|^2}{(t_\varepsilon-t)^{1/2-2\lambda}}\left( \int\int_\iota \frac{1}{|x_1-y|^{3/4-3\lambda}} \right)^{2/3} \left( \int_{x_1=0}^\infty \frac{1}{\left( x_1^2+(x_2-\nu_\varepsilon^2)^2 + (t_\varepsilon-t)\right)^{3}}  \right)^{1/3}\\
\cdot \left( \int_{y=0}^\infty \frac{1}{\left( y^2+(x_2-\nu_\varepsilon^2)^2 + (t_\varepsilon-t)\right)^3}  \right)^{1/3} \\
\leq \frac{C_\varepsilon |\ln(t_\varepsilon-t)|^2}{(t_\varepsilon-t)^{1/2-2\lambda}} \frac{1}{\left( (x_2-\nu_\varepsilon^2)^2 + (t_\varepsilon-t)\right)^{10/6}}.
\end{multline}

Now, using (\ref{troisiemecaspsi2}) and the inequality (\ref{calcul(i)}), we obtain

\begin{multline}
\left| \int_{x_2=\nu_\varepsilon^2-2\delta_\varepsilon}^{\nu_\varepsilon^2 + 2\delta_\varepsilon} \psi_\varepsilon^2(x_2) (i) \right| \leq \frac{C_\varepsilon |\ln(t_\varepsilon-t)|^2}{(t_\varepsilon-t)^{1/2-2\lambda}} 2 \int_{x_2=0}^\infty \frac{1}{\sqrt{x_2} (x_2+(t_\varepsilon-t))^{5/3}} \\
\leq \frac{C_\varepsilon |\ln(t_\varepsilon-t)|^2}{(t_\varepsilon-t)^{5/3-2\lambda}}.
\end{multline}

\begin{remark}
Here, we used that 
\begin{equation*}
\int_{x=0}^\infty \frac{1}{\sqrt{x}} \frac{1}{(x+a)^{5/3}} = \frac{B(\frac{1}{2},\frac{7}{6})}{a^{7/6}},
\end{equation*}

where $B$ is the Euler integral of the first kind.

\end{remark}

For $(ii)$, we obtain using the triangular inequality in (\ref{estestestphix}) as well as (\ref{superestA}) and (\ref{superestB}),

\begin{multline}\label{calcul(ii)}
|(ii)| \leq \int\int_\iota \frac{1}{|\phi(t,\nu_\varepsilon^1+x_1,x_2) - \phi(t,\nu_\varepsilon^2+y,x_2)|^{1/2-2\lambda} } \frac{C_\varepsilon |\ln(t_\varepsilon-t)| |\phi_{x_1x_1}(t,\nu_\varepsilon^1+y,x_2)|}{\phi_{x_1}(t,\nu_\varepsilon^1+x_1,x_2)\phi_{x_1}^2(t,\nu_\varepsilon^1+y,x_2)}\\
\leq \frac{C_\varepsilon|\ln(t_\varepsilon-t)|}{(t_\varepsilon-t)^{1/2-2\lambda}}\int\int_\iota \frac{1}{|x_1-y|^{1/2-2\lambda}} \frac{1}{x_1^2 + (x_2-\nu_\varepsilon^2)^2+(t_\varepsilon-t)} \frac{|y|+|x_2-\nu_\varepsilon^2| + (t_\varepsilon-t)}{\left( y^2 + (x_2-\nu_\varepsilon^2)^2+(t_\varepsilon-t)\right)^2}\\
\leq_{Holder} \frac{C_\varepsilon |\ln(t_\varepsilon-t)|}{(t_\varepsilon-t)^{1/2-2\lambda}} \left( \int\int_\iota \frac{1}{|x_1-y|^{3/4-3\lambda}} \right)^{2/3} \left( \int_{x_1=0}^\infty \frac{1}{\left(x_1^2+(x_2-\nu_\varepsilon^2)^2 + (t_\varepsilon-t)\right)^3} \right)^{1/3} \\
\cdot \left( \int_{y=0}^\infty \frac{\left( y+|\nu_\varepsilon^2-x_2| + (t_\varepsilon-t)\right)^3}{\left(y^2+(x_2-\nu_\varepsilon^2)^2 + (t_\varepsilon-t)\right)^6} \right)^{1/3}\\
\leq \frac{C_\varepsilon |\ln(t_\varepsilon-t)|}{(t_\varepsilon-t)^{1/2-2\lambda}} \frac{1}{\left( (x_2-\nu_\varepsilon^2)^2 + (t_\varepsilon-t)\right)^{5/6}} \frac{1}{\left( (x_2-\nu_\varepsilon^2)^2 + (t_\varepsilon-t) \right)^{4/3}}.
\end{multline}

Now, integrating (\ref{calcul(ii)}) with respect to $x_2$ and using (\ref{troisiemecaspsi2}) leads to

\begin{multline}
\left| \int_{x_2=\nu_\varepsilon^2 - 2 \delta_\varepsilon}^{\nu_\varepsilon^2+2\delta_\varepsilon} \psi_\varepsilon^2(x_2) (ii) \right| \leq \frac{C_\varepsilon|\ln(t_\varepsilon-t)|}{(t_\varepsilon-t)^{1/2-2\lambda}} 2 \int_{x_2=0}^\infty \frac{1}{\left( x_2^2 + (t_\varepsilon-t) \right)^{13/6}} \\
\leq \frac{C_\varepsilon|\ln(t_\varepsilon-t)|}{(t_\varepsilon-t)^{1/2-2\lambda}} \int_{x_2=0}^\infty \frac{1}{\sqrt{x_2} \left( x_2 + (t_\varepsilon-t) \right)^{13/6}} \leq \frac{C_\varepsilon |\ln(t_\varepsilon-t)|}{(t_\varepsilon-t)^{13/6-2\lambda}}.
\end{multline}

\begin{remark}
Here, we used that 
\begin{equation*}
\int_{x=0}^\infty \frac{1}{\sqrt{x}} \frac{1}{(x+a)^{13/6}} = \frac{B(\frac{1}{2},\frac{5}{3})}{a^{5/3}},
\end{equation*}

where $B$ is the Euler integral of the first kind.

\end{remark}

The case of $(iii)$ is identical, and we obtain

\begin{equation}
\left| \int_{x_2=\nu_\varepsilon^2 - 2 \delta_\varepsilon}^{\nu_\varepsilon^2+2\delta_\varepsilon} \psi_\varepsilon^2(x_2) (iii) \right| \leq \frac{C_\varepsilon |\ln(t_\varepsilon-t)|}{(t_\varepsilon-t)^{13/6}}.
\end{equation}

Now, we look at $(iv)$ and exhibit a lower bound. We split the integrand $A$ into two parts, the first part corresponds to terms of order one of $\phi_{x_1x_1}$, the second corresponds to the terms of superior orders, as expressed in (\ref{estestestphix}). We denote :

\begin{equation}\label{touslesi}
\begin{aligned}
&(iv)_1 = \int\int_\iota \frac{ C_\varepsilon^1 x_1 + C_\varepsilon^2 (x_2-\nu_\varepsilon^2) + C_\varepsilon^3 (t_\varepsilon-t) }{|\phi(t,\nu_\varepsilon^1+x_1,x_2) - \phi(t,\nu_\varepsilon^2+y,x_2)|^{1/2-2\lambda} } \\
&\hspace{7cm}\cdot\frac{ C_\varepsilon^1 y + C_\varepsilon^2 (x_2-\nu_\varepsilon^2) + C_\varepsilon^3 (t_\varepsilon-t) }{\phi_{x_1}(y)^2 \phi_{x_1}(x_1)^2}          \\  
&(iv)_2 = \int\int_\iota \frac{ \sum_{i+j+k=2} (x_1)^i (x_2-\nu_\varepsilon^2)^j (t_\varepsilon-t)^k f_{i,j,k}(t,x_1,x_2) }{|\phi(t,\nu_\varepsilon^1+x_1,x_2) - \phi(t,\nu_\varepsilon^2+y,x_2)|^{1/2-2\lambda} } \\
&\hspace{7cm}\cdot\frac{ C_\varepsilon^1 y + C_\varepsilon^2 (x_2-\nu_\varepsilon^2) + C_\varepsilon^3 (t_\varepsilon-t) }{\phi_{x_1}(y)^2 \phi_{x_1}(x_1)^2}          \\
&(iv)_3 = \int\int_\iota \frac{ C_\varepsilon^1 x_1 + C_\varepsilon^2 (x_2-\nu_\varepsilon^2) + C_\varepsilon^3 (t_\varepsilon-t) }{|\phi(t,\nu_\varepsilon^1+x_1,x_2) - \phi(t,\nu_\varepsilon^2+y,x_2)|^{1/2-2\lambda} }\\
&\hspace{5cm}\cdot \frac{ \sum_{i+j+k=2} (y)^i (x_2-\nu_\varepsilon^2)^j (t_\varepsilon-t)^k f_{i,j,k}(t,x_1,x_2) }{\phi_{x_1}(y)^2 \phi_{x_1}(x_1)^2}          \\  
&(iv)_4 = \int\int_\iota \frac{ \sum_{i+j+k=2} (x_1)^i (x_2-\nu_\varepsilon^2)^j (t_\varepsilon-t)^k f_{i,j,k}(t,x_1,x_2)  }{|\phi(t,\nu_\varepsilon^1+x_1,x_2) - \phi(t,\nu_\varepsilon^2+y,x_2)|^{1/2-2\lambda} } \\
&\hspace{5cm}\cdot\frac{ \sum_{i+j+k=2} (y)^i (x_2-\nu_\varepsilon^2)^j (t_\varepsilon-t)^k f_{i,j,k}(t,x_1,x_2) }{\phi_{x_1}(y)^2 \phi_{x_1}(x_1)^2}
\end{aligned}
\end{equation}

We will show that 

\begin{multline}\label{desix2}
\left| \int_{x_2 = \nu_\varepsilon^2 - 2\delta_\varepsilon}^{\nu_\varepsilon^2+2\delta_\varepsilon} \psi_\varepsilon^2(x_2) (iv)_1 \right| >> \left| \int_{x_2 = \nu_\varepsilon^2 - 2\delta_\varepsilon}^{\nu_\varepsilon^2+2\delta_\varepsilon} \psi_\varepsilon^2(x_2) (iv)_2 \right| \\
 + \left| \int_{x_2 = \nu_\varepsilon^2 - 2\delta_\varepsilon}^{\nu_\varepsilon^2+2\delta_\varepsilon} \psi_\varepsilon^2(x_2) (iv)_3 \right| + \left| \int_{x_2 = \nu_\varepsilon^2 - 2\delta_\varepsilon}^{\nu_\varepsilon^2+2\delta_\varepsilon} \psi_\varepsilon^2(x_2) (iv)_4 \right|, \hspace{0.2cm} \text{ as } t\rightarrow t_\varepsilon.
\end{multline}

First, we look at $(iv)_1$. 

We more precisely consider the integral

\begin{equation}
(iv)_1^1= \int\int_\iota \frac{ x_1 }{|\phi(t,\nu_\varepsilon^1+x_1,x_2) - \phi(t,\nu_\varepsilon^2+y,x_2)|^{1/2-2\lambda} } \cdot\frac{ y }{\phi_{x_1}(y)^2 \phi_{x_1}(x_1)^2}
\end{equation}

Again, because of $(y-\nu_\varepsilon)$ anti-symmetric properties but the fact that $\frac{1}{|\phi(t,x_1) - \phi(t,y)|^{1/2-2\lambda}}$ will concentrate the weight near $x_1 = y$, we have to split the domain into four parts.

\begin{equation}\label{domains3}
\begin{aligned}
&\alpha_{x_2} = \{(x_1,y) \in [\zeta_\varepsilon^{2+},\nu_\varepsilon] \times [\nu_\varepsilon-\eta_\varepsilon,\nu_\varepsilon] \}, \hspace{0.2cm} \beta_{x_2} = \{(x_1,y) \in [\nu_\varepsilon,\zeta_\varepsilon^{3-}] \times [\nu_\varepsilon-\eta_\varepsilon,\nu_\varepsilon] \}\\
&\gamma_{x_2} = \{(x_1,y) \in [\zeta_\varepsilon^{2+},\nu_\varepsilon] \times [\nu_\varepsilon,\nu_\varepsilon+\eta_\varepsilon] \}, \hspace{0.2cm} \delta_{x_2} = \{(x_1,y) \in [\nu_\varepsilon,\zeta_\varepsilon^{3-}] \times [\nu_\varepsilon,\nu_\varepsilon+\eta_\varepsilon] \}\\
&\alpha = \alpha_{x_2} \times [\nu_\varepsilon^2 - 2\delta_\varepsilon,\nu_\varepsilon^2+2\delta_\varepsilon], \hspace{0.2cm} \beta = \beta_{x_2} \times [\nu_\varepsilon^2 - 2\delta_\varepsilon,\nu_\varepsilon^2+2\delta_\varepsilon], \hspace{0.2cm} \\
&\gamma = \gamma_{x_2} \times [\nu_\varepsilon^2 - 2\delta_\varepsilon,\nu_\varepsilon^2+2\delta_\varepsilon], \hspace{0.2cm} \delta = \delta_{x_2} \times [\nu_\varepsilon^2 - 2\delta_\varepsilon,\nu_\varepsilon^2+2\delta_\varepsilon]
\end{aligned}
\end{equation}

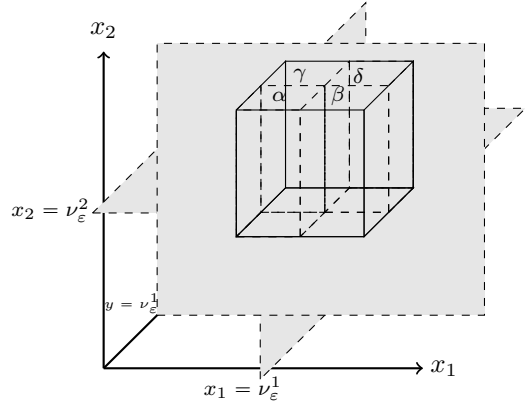
\begin{figure}[h]\label{alphastar3d}\centering 
\begin{tikzpicture}[scale=0.7]
	\draw[thick,->] (0,0,0) -- (6,0,0);
	\draw (0,0,-5.4) node {$y$};
	\draw[thick,->] (0,0,0) -- (0,6,0);
	\draw (6.4,0,0) node {$x_1$};
	\draw[thick,->] (0,0,0) -- (0,0,-5);
	\draw (0,6.4,0) node {$x_2$};

	\draw[fill=black!10,dashed,opacity=0.1] (-0.15,3,0.15) -- (6,3,0.15) -- (6,3,-5) -- (-0.15,3,-5) -- cycle;
	\draw (-1,3,0) node {\footnotesize $x_2 = \nu_\varepsilon^2$};
	
	\draw[fill=black!10,opacity=0.1,dashed] (3,-0.15,0.15) -- (3,5,0.15) -- (3,5,-5) -- (3,-0.15,-5) -- cycle;
	\draw (3,0,1) node {\footnotesize  $x_1 = \nu_\varepsilon^1$};

	\draw[dashed,fill=black!10,opacity=0.1] (-0.15,-0.15,-3) -- (-0.15,5,-3) -- (6,5,-3) -- (6,-0.15,-3) -- cycle;
	\draw (-0.7,0,-3.2) node {\tiny $y = \nu_\varepsilon^1$};	

	\draw (1.8,1.8,-1.8) -- (4.2,1.8,-1.8) -- (4.2,1.8,-4.2) -- (1.8,1.8,-4.2) -- cycle;
	\draw (1.8,1.8,-1.8) -- (1.8,4.2,-1.8) -- (1.8,4.2,-4.2) -- (1.8,1.8,-4.2) -- cycle;
	\draw (1.8,1.8,-1.8) -- (4.2,1.8,-1.8) -- (4.2,4.2,-1.8) -- (1.8,4.2,-1.8) -- cycle;
	\draw (1.8,1.8,-4.2) -- (4.2,1.8,-4.2) -- (4.2,4.2,-4.2) -- (1.8,4.2,-4.2) -- cycle;
	\draw (4.2,1.8,-1.8) -- (4.2,4.2,-1.8) -- (4.2,4.2,-4.2) -- (4.2,1.8,-4.2) -- cycle;
	
\pgfmathsetmacro{\cubex}{1.8}
\pgfmathsetmacro{\cubey}{1.8}
\pgfmathsetmacro{\cubez}{-1.8}
\pgfmathsetmacro{\cubx}{3}
\pgfmathsetmacro{\cuby}{4.2}
\pgfmathsetmacro{\cubz}{-3}

	\draw[dashed] (\cubex,\cubey,\cubez) -- (\cubx,\cubey,\cubez) -- (\cubx,\cubey,\cubz) -- (\cubex,\cubey,\cubz) -- cycle;
	\draw[dashed] (\cubex,\cubey,\cubez) -- (\cubex,\cuby,\cubez) -- (\cubex,\cuby,\cubz) -- (\cubex,\cubey,\cubz) -- cycle;
	\draw[dashed] (\cubex,\cubey,\cubez) -- (\cubx,\cubey,\cubez) -- (\cubx,\cuby,\cubez) -- (\cubex,\cuby,\cubez) -- cycle;
	\draw[dashed] (\cubex,\cubey,\cubz) -- (\cubx,\cubey,\cubz) -- (\cubx,\cuby,\cubz) -- (\cubex,\cuby,\cubz) -- cycle;
	\draw[dashed] (\cubx,\cubey,\cubez) -- (\cubx,\cuby,\cubez) -- (\cubx,\cuby,\cubz) -- (\cubx,\cubey,\cubz) -- cycle;

\pgfmathsetmacro{\cubex}{3}
\pgfmathsetmacro{\cubey}{1.8}
\pgfmathsetmacro{\cubez}{-3}
\pgfmathsetmacro{\cubx}{4.2}
\pgfmathsetmacro{\cuby}{4.2}
\pgfmathsetmacro{\cubz}{-4.2}

	\draw[dashed] (\cubex,\cubey,\cubez) -- (\cubx,\cubey,\cubez) -- (\cubx,\cubey,\cubz) -- (\cubex,\cubey,\cubz) -- cycle;
	\draw[dashed] (\cubex,\cubey,\cubez) -- (\cubex,\cuby,\cubez) -- (\cubex,\cuby,\cubz) -- (\cubex,\cubey,\cubz) -- cycle;
	\draw[dashed] (\cubex,\cubey,\cubez) -- (\cubx,\cubey,\cubez) -- (\cubx,\cuby,\cubez) -- (\cubex,\cuby,\cubez) -- cycle;
	\draw[dashed] (\cubex,\cubey,\cubz) -- (\cubx,\cubey,\cubz) -- (\cubx,\cuby,\cubz) -- (\cubex,\cuby,\cubz) -- cycle;
	\draw[dashed] (\cubx,\cubey,\cubez) -- (\cubx,\cuby,\cubez) -- (\cubx,\cuby,\cubz) -- (\cubx,\cubey,\cubz) -- cycle;

\draw (2.15,4,-3) node {\footnotesize $\alpha$};
\draw (3.25,4,-3) node {\footnotesize $\beta$};
\draw (2.15,4,-4) node {\footnotesize $\gamma$};
\draw (3.25,4,-4) node {\footnotesize $\delta$};

\end{tikzpicture}
\caption{Definition of $\alpha$, $\beta$, $\gamma$ and $\delta$}
\end{figure}

And we have 
\begin{equation}
(iv)_1^1 = \int\int_{\alpha} i(x_1,y) + \int\int_{\beta} i(x_1,y) +\int\int_{\gamma} i(x_1,y) + \int\int_{\delta} i(x_1,y).
\end{equation}

We also make the change of variable $(x_1,y) = (x_1 - \nu_\varepsilon^1,y-\nu_\varepsilon^1)$ without relabeling the set nor the variables.

We will regroup the integral corresponding to $\delta$ and $\beta$. The symmetric case ($\gamma$ and $\alpha$) is identical. Because it is very similar to the computation we did in the previous chapters, we will skip a few details. We consider the term

\begin{multline}\label{Jdefinition22}
J(x_2)=\int\int_{\delta \cup \beta}  \frac{C_\varepsilon}{|\phi(t,x_1) - \phi(t,y) |^{1/2-2\lambda}} \frac{ x_1}{\phi_{y}(t,x_1)^2} \frac{ y}{\phi_{y}(t,y)^2} \\
= \int_{x_1=0}^{\kappa_\varepsilon} \int_{y=0}^{\kappa_\varepsilon} \Bigg[ \frac{y \cdot x_1}{|\phi(t,\nu_\varepsilon+x_1) - \phi(t,\nu_\varepsilon+y) |^{1/2-2\lambda}\phi_{y}(t,\nu_\varepsilon+x_1)^2\phi_{y}(t,\nu_\varepsilon+y)^2}  \\
-\frac{ y \cdot x_1 }{|\phi(t,\nu_\varepsilon+x_1) - \phi(t,\nu_\varepsilon-y) |^{1/2-2\lambda}\phi_{y}(t,\nu_\varepsilon-y)^2\phi_{y}(t,\nu_\varepsilon+x_1)^2}
 \Bigg]
\end{multline}

We similarly decompose the integrand as follows.

\begin{multline}
\frac{1}{|\phi(t,\nu_\varepsilon+x_1) - \phi(t,\nu_\varepsilon+y) |^{1/2-2\lambda}} \frac{\chi'(\nu_\varepsilon+x_1) \cdot x_1}{\phi_{y}(t,\nu_\varepsilon+x_1)^2} \frac{\chi'(\nu_\varepsilon+y) \cdot y}{\phi_{y}(t,\nu_\varepsilon+y)^2} \\
-\frac{1}{|\phi(t,\nu_\varepsilon+x_1) - \phi(t,\nu_\varepsilon-y) |^{1/2-2\lambda}} \frac{\chi'(\nu_\varepsilon+x_1) \cdot x_1}{\phi_{y}(t,\nu_\varepsilon+x_1)^2} \frac{\chi'(\nu_\varepsilon-y) \cdot y}{\phi_{y}(t,\nu_\varepsilon-y)^2} \\
= \Bigg[ \frac{1}{|\phi(t,\nu_\varepsilon+x_1) - \phi(t,\nu_\varepsilon+y) |^{1/2-2\lambda}} \frac{\chi'(\nu_\varepsilon+x_1) \cdot x_1}{\phi_{y}(t,\nu_\varepsilon+x_1)^2} \frac{\chi'(\nu_\varepsilon+y) \cdot y}{\phi_{y}(t,\nu_\varepsilon+y)^2} \\
- \frac{1}{|\phi(t,\nu_\varepsilon+x_1) - \phi(t,\nu_\varepsilon-y) |^{1/2-2\lambda}} \frac{\chi'(\nu_\varepsilon+x_1) \cdot x_1}{\phi_{y}(t,\nu_\varepsilon+x_1)^2} \frac{\chi'(\nu_\varepsilon+y) \cdot y}{\phi_{y}(t,\nu_\varepsilon+y)^2} \Bigg] \\
+ \Bigg[ \frac{1}{|\phi(t,\nu_\varepsilon+x_1) - \phi(t,\nu_\varepsilon-y) |^{1/2-2\lambda}} \frac{\chi'(\nu_\varepsilon+x_1) \cdot x_1}{\phi_{y}(t,\nu_\varepsilon+x_1)^2} \frac{\chi'(\nu_\varepsilon+y) \cdot y}{\phi_{y}(t,\nu_\varepsilon+y)^2} \\
- \frac{1}{|\phi(t,\nu_\varepsilon+x_1) - \phi(t,\nu_\varepsilon-y) |^{1/2-2\lambda}} \frac{\chi'(\nu_\varepsilon+x_1) \cdot x_1}{\phi_{y}(t,\nu_\varepsilon+x_1)^2} \frac{\chi'(\nu_\varepsilon+y) \cdot y}{\phi_{y}(t,\nu_\varepsilon-y)^2} \Bigg] \\
=D_1 + D_2.
\end{multline}

To shorten a bit the notations, we will denote 

\begin{equation}
\alpha_{\pm} = \left| \phi_\varepsilon(t,\nu_\varepsilon+x_1) - \phi_\varepsilon(t,\nu_\varepsilon\pm y) \right|.
\end{equation}

For $D_1$, we write

\begin{multline}
\frac{1}{|\phi_\varepsilon(t,\nu_\varepsilon+x_1) - \phi_\varepsilon(t,\nu_\varepsilon+y) |^{1/2-2\lambda}} - \frac{1}{|\phi_\varepsilon(t,\nu_\varepsilon+x_1) - \phi_\varepsilon(t,\nu_\varepsilon-y) |^{1/2-2\lambda}} \\
= \frac{\alpha_- - \alpha_+}{\alpha_+^{1/2-2\lambda} \alpha_-^{1/2-2\lambda}\left(\alpha_+^{1/2+2\lambda} + \alpha_-^{1/2+2\lambda}\right)} + \frac{\alpha_+^{4\lambda} - \alpha_-^{4\lambda}}{\left(\alpha_+^{1/2+2\lambda} + \alpha_-^{1/2+2\lambda}\right)}
\end{multline}

It is clear that $D_1$ is non-negative when $x_1\geq y$. We now provide a lower bound for $D_{1,x_1 \leq y}$. We will show that it is positive up to a smaller order term, and provide a lower bound for the positive term. We hence now consider $x_1 \leq y$.

We write, 

\begin{multline}\label{lowerlol9}
\alpha_- - \alpha_+ = \left| \phi_\varepsilon(t,\nu_\varepsilon+x_1)-\phi_\varepsilon(t,\nu_\varepsilon-y) \right| - \left| \phi_\varepsilon(t,\nu_\varepsilon+x_1) - \phi_\varepsilon(t,\nu_\varepsilon+y) \right| \\
= 2 \phi_\varepsilon(t,\nu_\varepsilon+x_1) - \phi_\varepsilon(t,\nu_\varepsilon+y) - \phi_\varepsilon(t,\nu_\varepsilon-y) = \int_{s=-y}^{x_1} \phi_{\varepsilon,y}(t,\nu_\varepsilon+s) - \int_{s=x_1}^y \phi_{\varepsilon,y}(t,\nu_\varepsilon+s) \\
= \int_{s=-x_1}^{x_1} \phi_{\varepsilon,y}(t,\nu_\varepsilon+s) + \int_{s=x_1}^y \left( \phi_{\varepsilon,y} (t,\nu_\varepsilon-s) - \phi_{\varepsilon,y}(t,\nu_\varepsilon+s) \right).
\end{multline}

We will use this idea to provide a lower bound for $D_1$. 

\begin{multline}\label{unlabellabel2}
\frac{1}{\alpha_+^{1/2-2\lambda}} - \frac{1}{\alpha_-^{1/2-2\lambda}} = \int_{s=\alpha_-}^{\alpha_+} -(\frac{1}{2}-2\lambda) \frac{1}{s^{3/2-2\lambda}} = C \int_{s=\alpha_+}^{\alpha_-} s^{-3/2+2\lambda}\\
= C \int_{s=\alpha_+}^{\alpha_+ + \int_{z=-x_1}^{x_1} \phi_{\varepsilon,y}(t,\nu_\varepsilon+z) } s^{-3/2+2\lambda} \\
+ C\int_{s=\alpha_+ + \int_{z=-x_1}^{x_1}\phi_{\varepsilon,y}(t,\nu_\varepsilon+z) }^{\alpha_+ + \int_{z=-x_1}^{x_1} \phi_{\varepsilon,y}(t,\nu_\varepsilon+z) +\int_{z=x_1}^y \left( \phi_{\varepsilon,y} (t,\nu_\varepsilon-z) - \phi_{\varepsilon,y}(t,\nu_\varepsilon+z) \right)} s^{-3/2+2\lambda} = (i) + (ii),
\end{multline}

where $(i)$ is non-negative and $(ii)$ is small. First, we make the following upper bound for $|(ii)|$. If $\int_{x_1}^y  \left( \phi_{\varepsilon,y}(t,\nu_\varepsilon-s) - \phi_{\varepsilon,y}(t,\nu_\varepsilon+s) \right)>0$, then $(ii)$ is non-negative. Otherwise, we have $\alpha_{+}+\int_{s=-x_1}^{x_1} \phi_{\varepsilon,y}(t,\nu_\varepsilon+s)+\int_{s=x_1}^y  \left( \phi_{\varepsilon,y}(t,\nu_\varepsilon-s) - \phi_{\varepsilon,y}(t,\nu_\varepsilon+s) \right)\leq \alpha_{+}+\int_{s=-x_1}^{x_1} \phi_{\varepsilon,y}(t,\nu_\varepsilon+s)$. We then proceed as follows 

\begin{multline}
\left| \int_{s=\alpha_{+}+\int_{-x_1}^{x_1} \phi_{\varepsilon,y}(t,\nu_\varepsilon+s)}^{\alpha_{+}+\int_{z=-x_1}^{x_1} \phi_{\varepsilon,y}(t,\nu_\varepsilon+z)+\int_{z=x_1}^y  \left( \phi_{\varepsilon,y}(t,\nu_\varepsilon-z) - \phi_{\varepsilon,y}(t,\nu_\varepsilon+z) \right)}  s^{-3/2-2\lambda} \right| \\
= \left| \int_{s=\alpha_-}^{\alpha_- - \int_{z=x_1}^y  \left( \phi_{\varepsilon,y}(t,\nu_\varepsilon-z) - \phi_{\varepsilon,y}(t,\nu_\varepsilon+z) \right)} s^{-3/2+2\lambda} \right| \\
\leq \left| \int_{z=x_1}^y  \left( \phi_{\varepsilon,y}(t,\nu_\varepsilon-z) - \phi_{\varepsilon,y}(t,\nu_\varepsilon+z) \right) \right| \cdot \alpha_-^{-3/2+2\lambda}. 
\end{multline}

From the Taylor expansion, we obtain for $x,y$ small enough (only depending on $\varepsilon$),

\begin{equation}
\left| \int_{z=x_1}^y  \left( \phi_{\varepsilon,y}(t,\nu_\varepsilon-s) - \phi_{\varepsilon,y}(t,\nu_\varepsilon+s) \right) \right| \leq C_1 \left( y^4 -x_1^4 + (t_\varepsilon-t)^2 + (t-t_\varepsilon)(y^2-x_1^2) \right). 
\end{equation}

This means that we obtain (up to a non-negative contribution)

\begin{equation}
|(ii)| \leq C \left(y^4 + x_1^4 + (t_\varepsilon-t)\left( x_1^2 + y^2 \right) + (t_\varepsilon-t)^2 \right) \cdot \frac{1}{\alpha_-^{3/2-2\lambda}}.
\end{equation}

We now go on with the lower bound for $(i)$. Using the expression of $(i)$ provided by (\ref{unlabellabel2}) as well as the mean value theorem, we obtain

\begin{multline}\label{voiciunlabelnote5}
(i) = \int_{s=\alpha_+}^{\alpha_+ + \int_{z=-x_1}^{x_1} \phi_{\varepsilon,y}(t,\nu_\varepsilon+z) } s^{-3/2+2\lambda} \\
 \geq \left( \int_{z=-x_1}^{x_1} \phi_{\varepsilon,y}(t,\nu_\varepsilon+z) \right) \cdot \frac{1}{ \left(\phi_\varepsilon(t,\nu_\varepsilon+y) - \phi_{\varepsilon} (t,\nu_\varepsilon-x_1) \right)^{3/2-2\lambda}}. \\
\geq C x_1 \phi_{\varepsilon,y}(t,c_1) \cdot \frac{1}{(y+x_1)^{3/2-2\lambda} \cdot \phi_{\varepsilon,y}(t,c_2)^{3/2-2\lambda}}.
\end{multline}

Now, since we have $x_1\leq y$, we obtain

\begin{equation}\label{voiciunlabelnote6}
\phi_{\varepsilon,y}(t,c_1) \geq C (t_\varepsilon-t),
\end{equation}

and

\begin{equation}\label{voiciunlabelnote7}
\phi_{\varepsilon,y}(t,c_2) \leq C \left( (t_\varepsilon-t) + y^2\right).
\end{equation}

Using (\ref{voiciunlabelnote6}) and (\ref{voiciunlabelnote7}) inside of (\ref{voiciunlabelnote5}), we obtain

\begin{equation}
(i) \geq C \frac{x_1 (t_\varepsilon-t)}{(y+x_1)^{3/2-2\lambda} \cdot ((t_\varepsilon-t) + y^2 + (x_2-\nu_\varepsilon^2)^2 )^{3/2-2\lambda} }.
\end{equation}

Now, we obtain for $D_1$,

\begin{multline}\label{integrand22}
D_1 = D_{x_1\geq y} + D_{x_1 \leq y} \geq D_{x_1\leq y} \\
\geq C \int\int_{x_1\leq y} \frac{x_1 (t_\varepsilon-t)}{(y+x_1)^{3/2-2\lambda} \cdot ((t_\varepsilon-t) + y^2)^{3/2-2\lambda} } \frac{\chi'(\nu_\varepsilon+x_1) \cdot x_1}{\phi_{\varepsilon,y}(t,\nu_\varepsilon+x_1)^2} \frac{\chi'(\nu_\varepsilon+y) \cdot y}{\phi_{\varepsilon,y}(t,\nu_\varepsilon+y)^2} \\
- C \int_{x_1=0}^{\kappa_\varepsilon}\int_{y=x_1}^{\kappa_\varepsilon} \frac{x^4}{\alpha_{-}^{3/2-2\lambda}}\frac{\chi'(\nu_\varepsilon+x_1) \cdot x_1}{\phi_{\varepsilon,y}(t,\nu_\varepsilon+x_1)^2} \frac{\chi'(\nu_\varepsilon+y) \cdot y}{\phi_{\varepsilon,y}(t,\nu_\varepsilon+y)^2} \\
- C \int_{x_1=0}^{\kappa_\varepsilon}\int_{y=x_1}^{\kappa_\varepsilon} \frac{y^4}{\alpha_{-}^{3/2-2\lambda}}\frac{\chi'(\nu_\varepsilon+x_1) \cdot x_1}{\phi_{\varepsilon,y}(t,\nu_\varepsilon+x_1)^2} \frac{\chi'(\nu_\varepsilon+y) \cdot y}{\phi_{\varepsilon,y}(t,\nu_\varepsilon+y)^2} \\
- C \int_{x_1=0}^{\kappa_\varepsilon} \int_{y=x_1}^{\kappa_\varepsilon} \frac{(t_\varepsilon-t)\cdot x_1^2}{\alpha_{-}^{3/2-2\lambda}}\frac{\chi'(\nu_\varepsilon+x_1) \cdot x_1}{\phi_{\varepsilon,y}(t,\nu_\varepsilon+x_1)^2} \frac{\chi'(\nu_\varepsilon+y) \cdot y}{\phi_{\varepsilon,y}(t,\nu_\varepsilon+y)^2} \\
- C \int_{x_1=0}^{\kappa_\varepsilon} \int_{y=x_1}^{\kappa_\varepsilon} \frac{(t_\varepsilon-t)\cdot y^2}{\alpha_{-}^{3/2-2\lambda}}\frac{\chi'(\nu_\varepsilon+x_1) \cdot x_1}{\phi_{\varepsilon,y}(t,\nu_\varepsilon+x_1)^2} \frac{\chi'(\nu_\varepsilon+y) \cdot y}{\phi_{\varepsilon,y}(t,\nu_\varepsilon+y)^2} \\
- C \int_{x_1=0}^{\kappa_\varepsilon} \int_{y=x_1}^{\kappa_\varepsilon} \frac{(t_\varepsilon-t)^2}{\alpha_{-}^{3/2-2\lambda}}\frac{\chi'(\nu_\varepsilon+x_1) \cdot x_1}{\phi_{\varepsilon,y}(t,\nu_\varepsilon+x_1)^2} \frac{\chi'(\nu_\varepsilon+y) \cdot y}{\phi_{\varepsilon,y}(t,\nu_\varepsilon+y)^2} \\
= A_1 - B_1 - B_2 - B_3 - B_4 - B_5.
\end{multline}

Note that the proof also works for $\lambda=0$. We will now show that $A_1 \rightarrow \infty$, and that $J_2$, $J_3$, $J_4$, $D_2$, $B_1$, $B_2$, $B_3$, $B_4$ and $B_5$ are of a smaller order. We first consider $A_1$.
Here, we will consider $A_1$ because it gives the order of the main contribution, and showing that the other terms are of a smaller order is similar to what has been previously done. We will also consider the term $B_1$ to show how we deal with the smaller order terms. We will not consider $B_i$ for $i>1$ because it is similar to what have been previously done.

Using $|\phi_{y}(t,c(x_1,y))| \leq M_\varepsilon \left((x_1-\nu_\varepsilon^1)^2 + (x_2 - \nu_\varepsilon^2)^2 + (t_\varepsilon-t)\right)$, on $x_1 > y$, we have from (\ref{integrand22}) and the new change of variables $(x,y) = (r+z,r-z)$,

\begin{multline}\label{integrand7bis}
A_1 \geq  C_\varepsilon \int_{r=0}^{\kappa_\varepsilon/2} \int_{z=0}^r \frac{(r-z)(t_\varepsilon-t)}{r^{3/2-2\lambda}} \frac{r+z}{\left( \left( r+z \right)^2 + (x_2-\nu_\varepsilon^2)^2+ (t_\varepsilon-t) \right)^{7/2-2\lambda}} \\
\cdot \frac{r-z}{\left( \left( r-z \right)^2 + (x_2-\nu_\varepsilon^2)^2+ (t_\varepsilon-t) \right)^{2}} \\
\geq C_\varepsilon \int_{r=0}^{\kappa_\varepsilon/2} \frac{1}{r^{1/2-2\lambda}} \frac{(t_\varepsilon-t)}{\left( \left( 2r \right)^2 + (x_2-\nu_\varepsilon^2)^2 + (t_\varepsilon-t) \right)^{11/2-2\lambda}} \int_{z=0}^r (r-z)^2 \\
\geq C_\varepsilon \int_{r=0}^{\kappa_\varepsilon/2} \frac{(t_\varepsilon-t)\cdot r^{5/2+2\lambda}}{\left( \left( 2r \right)^2 + (x_2-\nu_\varepsilon^2)^2 + (t_\varepsilon-t) \right)^{11/2-2\lambda}} \geq \frac{C_\varepsilon \cdot (t_\varepsilon-t)}{(t_\varepsilon-t)^{15/4-3\lambda}}\\
\geq \frac{C_\varepsilon}{\left( (x_2-\nu_\varepsilon^2)^2 + (t_\varepsilon-t) \right)^{11/4-3\lambda}}.
\end{multline}

Hence, integrating (\ref{integrand7bis}) with respect to $x_2$ and using the properties of $\psi_\varepsilon^2$ given in (\ref{proppsi2}) yield

\begin{multline}\label{alphatroisieme7}
\int_{x_2=\nu_\varepsilon^2 - 2\delta_\varepsilon}^{\nu_\varepsilon^2 + 2\delta_\varepsilon} \int\int_\alpha \psi_\varepsilon^2(x_2) i(x_1,y,x_2) \geq \int_{x_2=\nu_\varepsilon^2 - \delta_\varepsilon}^{\nu_\varepsilon^2 + \delta_\varepsilon} \int\int_\alpha i(x_1,y,x_2) \\
\geq \int_{x_2=\nu_\varepsilon^2-\delta_\varepsilon}^{\nu_\varepsilon^2+\delta_\varepsilon} \frac{M_\varepsilon}{\left( (x_2-\nu_\varepsilon^2)^2 + (t_\varepsilon-t) \right)^{11/4-3\lambda}} \geq \frac{M_\varepsilon}{(t_\varepsilon-t)^{9/4-3\lambda}}.
\end{multline}

We now proceed with $B_1$. The case of $B_2$ is similar. We have since $\phi_{\varepsilon,y}(t,x) \geq (t_\varepsilon-t)$, with the mean value theorem

\begin{multline}\label{lacestunb1lol7}
|B_1| \leq \frac{C_\varepsilon}{(t_\varepsilon-t)^{3/2-2\lambda}} \int_{r=0}^{\kappa_\varepsilon/2} \int_{z=0}^r  \frac{(r-z)^4}{r^{3/2-2\lambda} } \\
\cdot  \frac{r+z}{\left( \left( r+z \right)^2+ (x_2-\nu_\varepsilon^2)^2 + (t_\varepsilon-t) \right)^{2}} \cdot \frac{r-z}{\left( \left( r-z \right)^2 + (x_2-\nu_\varepsilon^2)^2+ (t_\varepsilon-t) \right)^{2}} \\
\leq \frac{C_\varepsilon}{(t_\varepsilon-t)^{3/2-2\lambda}} \int_{r=0}^{\kappa_\varepsilon/2}  \frac{r^4}{r^{1/2-2\lambda} \left(r^2+(x_2-\nu_\varepsilon^2)^2 + (t_\varepsilon-t)\right)^2} \\
\cdot \int_{z=0}^r \frac{r-z}{ \left( (r-z)^2 + (x_2-\nu_\varepsilon^2)^2 + (t_\varepsilon-t) \right)^2}.
\end{multline}

Now, because

\begin{equation}
\int_{s=0}^\infty \frac{s}{\left(s^2 + (t_\varepsilon-t) \right)^2} \leq \frac{C}{(t_\varepsilon-t)},
\end{equation}

(\ref{lacestunb1lol7}) yields

\begin{multline}
|B_1| \leq \frac{C_\varepsilon}{(t_\varepsilon-t)^{3/2-2\lambda}} \cdot \frac{1}{\left( (x_2-\nu_\varepsilon^2)^2 + (t_\varepsilon-t) \right)} \int_{r=0}^{\kappa_\varepsilon/2}  \frac{r^{7/2+2\lambda}}{ \left(r^2+(t_\varepsilon-t)\right)^2} \\
 \leq \frac{C_\varepsilon}{(t_\varepsilon-t)^{3/2-2\lambda}} \cdot \frac{1}{\left( (t_\varepsilon-t) + (x_2-\nu_\varepsilon^2)^2 \right)}.
\end{multline}

Overall, we obtain 

\begin{equation}
B_1(x_2) \leq \frac{C_\varepsilon}{\left((t_\varepsilon-t)+(x_2-\nu_\varepsilon^2)^2\right)^{4/4}} \cdot \frac{1}{(t_\varepsilon-t)^{6/4-2\lambda}}.
\end{equation}

Hence, integrating with respect to $x_2$ and using the properties of $\psi_\varepsilon^2$ given in (\ref{proppsi2}) yield

\begin{equation}\label{alphatroisieme}
\int_{x_2=\nu_\varepsilon^2 - 2\delta_\varepsilon}^{\nu_\varepsilon^2 + 2\delta_\varepsilon} B_1(x_2) \leq \frac{C_\varepsilon}{(t_\varepsilon-t)^2}.
\end{equation}
In the end, we obtain

\begin{equation}
\int_{x_2} \int\int_{\beta\cup \delta} i(x_1,x_2,y) \geq \frac{C_\varepsilon}{(t_\varepsilon-t)^{9/4-3\lambda}}.
\end{equation}

We only studied $(iv)_1^1$. We now need to do an estimation for all the other terms that arise when we expand $(iv)_1$ whose expression is given by (\ref{touslesi}). 

First, we look at 

\begin{equation}
(iv)_1^2 = \int\int \frac{y(x_2-\nu_\varepsilon^2)}{|\phi(t,x_1,x_2) - \phi(t,y,x_2)|^{1/2-2\lambda}} \frac{1}{\phi_{x_1}^2(x_1) \phi_{x_1}^2(y)}.
\end{equation}

We will use the fact that the integrand is "almost" odd in $(x-\nu_\varepsilon^1,y-\nu_\varepsilon^1)$. We will now consider four domains $\iota^{+,+}$, $\iota^{+,-}$, $\iota^{-,-}$ and $\iota^{-,+}$ (for the new variables $y = y - \nu_\varepsilon^1$, $x= x - \nu_\varepsilon^1$.) They are defined as

\begin{equation}
\begin{aligned}
&\iota_{+,+} = \{x_1,y>0\}, \hspace{0.2cm} \iota_{+,-} = \{x_1>0,y<0\},\\
&\iota_{-,-} = \{x_1,y<0\}, \hspace{0.2cm} \iota_{-,+} = \{x_1<0,y>0\}.
\end{aligned}
\end{equation}

We will consider the two integrals 

\begin{equation}
\begin{aligned}
&I_1 = \int\int_{\iota^{+,+} \bigcup \iota^{-,-}} \frac{y(x_2-\nu_\varepsilon^2)}{ \phi_{x_1}(x_1)^2 \phi_{x_1}(y)^2 |\phi(x_1) - \phi(y)|^{1/2-2\lambda}}\\
&I_2 = \int\int_{\iota^{+,-} \bigcup \iota^{-,+}} \frac{y(x_2-\nu_\varepsilon^2)}{ \phi_{x_1}(x_1)^2 \phi_{x_1}(y)^2 |\phi(x_1) - \phi(y)|^{1/2-2\lambda}}.
\end{aligned}
\end{equation}

It will be easier to deal with $I_2$. For $I_1$ however, we will need some preliminaries. We first perform the change of variable for the part $x_1<0,y<0$ of the form $x=-x,y=-y$. We hence obtain for $I_1$

\begin{multline}
I_1 = \int\int_{\iota^{+,+}} y(x_2-\nu_\varepsilon^2) \\ \cdot \left[ \frac{1}{\phi_{x_1}(x_1)^2 \phi_{x_1}(y_1)^2 |\phi(x_1)-\phi(y)|^{1/2-2\lambda}} - \frac{1}{\phi_{x_1}(-x_1)^2 \phi_{x_1}(-y_1)^2 |\phi(-x_1)-\phi(-y)|^{1/2-2\lambda}} \right].
\end{multline}

We simplify (we add and subtract $\frac{1}{\phi_{x_1}(x_1)^2 \phi_{x_1}(y)^2 |\phi(-x)-\phi(-y)|^{1/2-2\lambda}}$) and obtain

\begin{multline}\label{iciilyadesjs}
I_1 = \int\int_{\iota^{+,+}} \frac{y(x_2-\nu_\varepsilon)}{\phi_{x_1}(x_1)^2 \phi_{x_1}(y)^2 }  \left[ \frac{1}{|\phi(x_1) - \phi(y)|^{1/2-2\lambda}} - \frac{1}{|\phi(-x)-\phi(-y)|^{1/2-2\lambda}} \right] \\
+ \int\int_{\iota^{+,+}} \frac{y(x_2-\nu_\varepsilon^2)}{|\phi(-x) - \phi(-y)|^{1/2-2\lambda}} \left[ \frac{1}{\phi_{x_1}(x_1)^2 \phi_{x_1}(y)^2} - \frac{1}{\phi_{x_1}(-x_1)^2 \phi_{x_1}(-y)^2} \right]\\
=J_1 + J_2.
\end{multline}

We denote for simplicity 

\begin{equation}
\beta_{\pm} = |\phi(\pm x) - \phi(\pm y)|.
\end{equation}

We consider the following simplification inside $J_1$. Also, we work on the set where $y>x$ by symmetry. Call this set $\iota_{y}^{+,+}$.

\begin{multline}\label{encoreunlabeln}
\frac{1}{\beta_{+}^{1/2-2\lambda}} - \frac{1}{\beta_{-}^{1/2-2\lambda}} 
= \frac{\beta_- - \beta_+ }{\beta_-^{1/2-2\lambda} \beta_+^{1/2-2\lambda}\left( \beta_-^{1/2+2\lambda} + -\beta_+^{1/2+2\lambda} \right)}+ \frac{\beta_+^{4\lambda} -\beta_-^{4\lambda} }{\beta_+^{1/2+2\lambda} + \beta_-^{1/2+2\lambda}}.
\end{multline}

Now, because of the symmetry of the problem, $\beta_- -\beta_+$ is of a smaller order. Indeed, considering that the Taylor expansion of $\phi_{x_1}$ around $\nu_\varepsilon^1$ is of the form (for the old variable $x_1$)

\begin{multline}\label{taylorok}
\phi_{x_1}(t,\nu_\varepsilon^1+y,x_2) = C_1 y^2 + C_2 (x_2-\nu_\varepsilon^2)^2 + C_3 (t_\varepsilon-t)\\
 + f_1(t,y,x_2) (x_2-\nu_\varepsilon^2)^3 + f_2(t,y,x_2) y^3 + f_3(t,y,x_2) (t_\varepsilon-t) y \\
 + f_4(t,y,x_2) (t_\varepsilon-t) (x_2-\nu_\varepsilon^2) + f_5(t,y,x_2) y (\nu_\varepsilon-x_2)^2 + f_6(t,y,x_2) (t_\varepsilon-t)^2 \\
 + f_7 (t,y,x_2) (\nu_\varepsilon-x_2) y^2.
\end{multline}

We now focus exclusively on the first term of (\ref{encoreunlabeln}). Call $J_1^1$ the corresponding integral.
We obtain that the term of order $2$ in the space variable and in order $1$ in the time variable cancel out. Indeed, for the first part, we have

\begin{multline}\label{encoredesbetas}
\frac{\beta_- - \beta_+ }{\beta_-^{1/2-2\lambda} \beta_+^{1/2-2\lambda}\left( \beta_-^{1/2+2\lambda} + -\beta_+^{1/2+2\lambda} \right)} = \frac{ \int_{s=\min(x,y)}^{\max(x,y)}  \left( \phi_{x_1}(s) - \phi_{x_1}(-s) \right) }{\beta_-^{1/2-2\lambda} \beta_+^{1/2-2\lambda}\left( \beta_-^{1/2+2\lambda} + -\beta_+^{1/2+2\lambda} \right)}
\end{multline}

and since we have, (we assume $x\leq y$ by symmetry), by (\ref{taylorok}),

\begin{multline}\label{unpetitordre}
\left| \int_{s=x}^{y}  \left( \phi_{x_1}(s) - \phi_{x_1}(-s) \right) \right| \leq C (y-x) |x_2-\nu_\varepsilon^2|^3 + C(y-x) y^3 + C(y-x) (t_\varepsilon-t)^2 \\
+ C(y-x) y(t_\varepsilon-t) + C(y-x) |x_2-\nu_\varepsilon^2|(t_\varepsilon-t).
\end{multline}

Plugging (\ref{encoredesbetas}) and (\ref{unpetitordre}) into the definition of $J_1^1$ gives

\begin{multline}
J_1^1 \simeq \int\int_{\iota^{+,+}} \frac{C y(x_2-\nu_\varepsilon^2) (x-y)^4 \phi_{x_1}(x_1)^{-2} \phi_{x_1}(y)^{-2}}{|\phi(-y)-\phi(-x)|^{1/2-2\lambda}|\phi(-x)-\phi(-y)|^{1/2-2\lambda}}\\
 \frac{1}{ \left(|\phi(y)-\phi(x)|^{1/2+2\lambda} + |\phi(-x)-\phi(-y)|^{1/2+2\lambda} \right)}.
\end{multline}

By symmetry, we again assume that $y\geq 0$, and perform the change of variable $x = r-z$, $y=r+z$. ($z\in [0,r]$).

Now, we have 

\begin{multline}
|J_{1}^1| \leq C |x_2-\nu_\varepsilon^2| \int\int \frac{(r+z) (2z)^4 }{(2z)^{1.5-2\lambda} \phi_{x_1}(y)^{3.5-2\lambda} \phi_{x_1}(x_1)^2 } \\
 \leq  \int_{r} \int_{z=0}^r  \frac{C |x_2-\nu_\varepsilon^2| (r+z) (2z)^4 }{(2z)^{1.5-2\lambda} \left( (r+z)^2 + (x_2-\nu_\varepsilon^2)^2 + (t_\varepsilon-t) \right)^{3.5-2\lambda} \left( (r-z)^2 + (x_2-\nu_\varepsilon^2)^2 + (t_\varepsilon-t) \right)^{2}} \\
\leq \int_{r} \int_{z=0}^{r/2}  \frac{C |x_2-\nu_\varepsilon^2| r (2z)^4 }{(2z)^{1.5-2\lambda} \left( (r)^2 + (x_2-\nu_\varepsilon^2)^2 + (t_\varepsilon-t) \right)^{5.5-2\lambda}} \\
+ \int_{r} \int_{z=r/2}^r  \frac{C |x_2-\nu_\varepsilon^2| (r) (z)^4 }{(r)^{1.5-2\lambda} \left( (r)^2 + (x_2-\nu_\varepsilon^2)^2 + (t_\varepsilon-t) \right)^{3.5-2\lambda} \left( (r-z)^2 + (x_2-\nu_\varepsilon^2)^2 + (t_\varepsilon-t) \right)^{2}} \\
= (i) + (ii).
\end{multline}

We obtain for $(i)$, 

\begin{multline}
(i) \leq \int_{r} C |x_2-\nu_\varepsilon^2| r \int_{z=0}^{r/2}  \frac{C  (2z)^{2.5+2\lambda }}{\left( (r)^2 + (x_2-\nu_\varepsilon^2)^2 + (t_\varepsilon-t) \right)^{5.5-2\lambda}} \\
\leq \int_{r} |x_2-\nu_\varepsilon^2| r^{4.5+2\lambda}   \frac{C}{\left( (r)^2 + (x_2-\nu_\varepsilon^2)^2 + (t_\varepsilon-t) \right)^{5.5-2\lambda}}\\
\leq |x_2-\nu_\varepsilon^2|  \frac{C}{\left( (x_2-\nu_\varepsilon^2)^2 + (t_\varepsilon-t) \right)^{11/4-\lambda}}.
\end{multline}

Now, integrating with respect to $x_2$ yields

\begin{equation}
\int_{x_2} \psi_\varepsilon^2(x_2) |(i)| \leq \frac{C}{(t_\varepsilon-t)^{7/4}},
\end{equation}

which is indeed of a smaller order.

Going on with $(ii)$, we get

\begin{multline}
(ii) \leq \int_{r} \frac{C |x_2-\nu_\varepsilon^2| (r)^5}{(r)^{1.5-2\lambda} \left( (r)^2 + (x_2-\nu_\varepsilon^2)^2 + (t_\varepsilon-t) \right)^{3.5-2\lambda}}  \\
\cdot \int_{z=0}^{r/2} \frac{1}{ \left( (z)^2 + (x_2-\nu_\varepsilon^2)^2 + (t_\varepsilon-t) \right)^{2}}\\
\leq \int_{r} \frac{C |x_2-\nu_\varepsilon^2| (r)^5}{(r)^{1.5-2\lambda} \left( (r)^2 + (x_2-\nu_\varepsilon^2)^2 + (t_\varepsilon-t) \right)^{3.5-2\lambda}} \cdot \frac{C}{\left( (x_2-\nu_\varepsilon^2)^2 + (t_\varepsilon-t) \right)}\\
\leq \frac{C}{\left( (x_2-\nu_\varepsilon^2)^2 + (t_\varepsilon-t) \right)^{11/4-\lambda}}.
\end{multline}

Now, integrating with respect to $x_2$ yields

\begin{equation}
\int_{x_2} \psi_\varepsilon^2(x_2) |(ii)| \leq \frac{C}{(t_\varepsilon-t)^{7/4}},
\end{equation}

which is indeed of a smaller order.

We go on with $J_1^2$, which corresponds to the second term in (\ref{encoreunlabeln}). Again, we will use the fact that

\begin{equation}
\beta_+^{4\lambda} - \beta_-^{4\lambda} = \int_{\beta_-}^{\beta_+} 4 \lambda s^{4\lambda-1}, 
\end{equation}

and that $|\beta_+-\beta_-|$ is of a smaller order, as we have previously shown. Indeed, we get

\begin{equation}
\left| \beta_-^{4\lambda} - \beta_+^{4\lambda} \right| \leq 4\lambda \left| \beta_- - \beta_+ \right|\cdot \left( |\beta_-|^{4\lambda-1} + |\beta_+|^{4\lambda-1} \right)
\end{equation}

The proof of this is very similar to what has been done to find the lower bound of $I_2$ so we will not do it explicitly here. In the end, the term is of a smaller order when $\lambda>0$.

Now, we go on with the term $J_2$ defined in (\ref{iciilyadesjs}). We will introduce the two difference terms

\begin{multline}
\frac{1}{\phi_{x_1}(x_1)^2 \phi_{x_1}(y)^2} - \frac{1}{\phi_{x_1}(-x_1)^2 \phi_{x_1}(-y)^2} \\
= \left( \frac{1}{\phi_{x_1}(x_1)^2 \phi_{x_1}(y)^2} - \frac{1}{\phi_{x_1}(x_1)^2 \phi_{x_1}(-y)^2} \right) \\ +  \left( \frac{1}{\phi_{x_1}(x_1)^2 \phi_{x_1}(-y)^2} - \frac{1}{\phi_{x_1}(-x_1)^2 \phi_{x_1}(-y)^2}  \right)
= j_2^1 + j_2^2,
\end{multline}

and define $J_2^1$ and $J_2^2$ as the two corresponding integrals.

Now for $J_2^1$, we write the simplification

\begin{equation}\label{destrucsencore}
\frac{1}{\phi_{x_1}(x_1)^2 \phi_{x_1}(y)^2} - \frac{1}{\phi_{x_1}(x_1)^2 \phi_{x_1}(-y)^2} = \frac{\left( \phi_{x_1}(-y) - \phi_{x_1} (y) \right) \left( \phi_{x_1}(-y) + \phi_{x_1}(y) \right)}{\phi_{x_1}(-y)^2 \phi_{x_1}(y)^2 \phi_{x_1}(x_1)^2}.
\end{equation}

Now, using (\ref{taylorok}) inside (\ref{destrucsencore}), we obtain that

\begin{multline}\label{mmm}
\left| J_2^1 \right| \leq  \int\int_{\iota^{+,+}} \frac{y(x_2-\nu_\varepsilon^2)}{|\phi(-x) - \phi(-y)|^{1/2-2\lambda}} \frac{\left| \phi_{x_1}(-y) - \phi_{x_1} (y) \right|}{\phi_{x_1}(-y)^2 \phi_{x_1}(y) \phi_{x_1}(x_1)^2}\\
+\int\int_{\iota^{+,+}} \frac{y(x_2-\nu_\varepsilon^2)}{|\phi(-x) - \phi(-y)|^{1/2-2\lambda}} \frac{\left| \phi_{x_1}(-y) - \phi_{x_1} (y) \right|}{\phi_{x_1}(-y) \phi_{x_1}(y) \phi_{x_1}(x_1)^2}. \\
\end{multline}

Because the two terms are similar, we only consider the first term of (\ref{mmm}). We get

\begin{multline}
\int\int_{\iota^{+,+}} \frac{y(x_2-\nu_\varepsilon^2)}{|\phi(-x) - \phi(-y)|^{1/2-2\lambda}} \frac{\left| \phi_{x_1}(-y) - \phi_{x_1} (y) \right|}{\phi_{x_1}(-y)^2 \phi_{x_1}(y) \phi_{x_1}(x_1)^2} \\
\leq \int\int_{\iota^{+,+}} \frac{y(x_2-\nu_\varepsilon^2)}{|\phi(-x) - \phi(-y)|^{1/2-2\lambda}} \frac{C_1 y^3}{\phi_{x_1}(-y)^2 \phi_{x_1}(y) \phi_{x_1}(x_1)^2} \\
+ \int\int_{\iota^{+,+}} \frac{y(x_2-\nu_\varepsilon^2)}{|\phi(-x) - \phi(-y)|^{1/2-2\lambda}} \frac{C_2 y^2 |x_2-\nu_\varepsilon^2|}{\phi_{x_1}(-y)^2 \phi_{x_1}(y) \phi_{x_1}(x_1)^2} \\
+ \int\int_{\iota^{+,+}} \frac{y(x_2-\nu_\varepsilon^2)}{|\phi(-x) - \phi(-y)|^{1/2-2\lambda}} \frac{C_3 y |x_2-\nu_\varepsilon|^2}{\phi_{x_1}(-y)^2 \phi_{x_1}(y) \phi_{x_1}(x_1)^2} \\
+ \int\int_{\iota^{+,+}} \frac{y(x_2-\nu_\varepsilon^2)}{|\phi(-x) - \phi(-y)|^{1/2-2\lambda}} \frac{C_4 |x_2-\nu_\varepsilon^2|^3}{\phi_{x_1}(-y)^2 \phi_{x_1}(y) \phi_{x_1}(x_1)^2} \\
+ \int\int_{\iota^{+,+}} \frac{y(x_2-\nu_\varepsilon^2)}{|\phi(-x) - \phi(-y)|^{1/2-2\lambda}} \frac{C_5 (t_\varepsilon-t)y}{\phi_{x_1}(-y)^2 \phi_{x_1}(y) \phi_{x_1}(x_1)^2} \\
+ \int\int_{\iota^{+,+}} \frac{y(x_2-\nu_\varepsilon^2)}{|\phi(-x) - \phi(-y)|^{1/2-2\lambda}} \frac{C_6 (t_\varepsilon-t)|x_2-\nu_\varepsilon^2|}{\phi_{x_1}(-y)^2 \phi_{x_1}(y) \phi_{x_1}(x_1)^2} \\
+ \int\int_{\iota^{+,+}} \frac{y(x_2-\nu_\varepsilon^2)}{|\phi(-x) - \phi(-y)|^{1/2-2\lambda}} \frac{C_7 (t_\varepsilon-t)^2}{\phi_{x_1}(-y)^2 \phi_{x_1}(y) \phi_{x_1}(x_1)^2} \\
=(i) + (ii) + (iii) + (iv) + (v) + (vi) + (vii).
\end{multline}

Now, the case of $(i)$, $(ii)$, $(iii)$ and $(iv)$ are similar, also $(v)$ and $(vi)$ are similar. This means that we will only consider $(i)$, $(v)$ and $(vii)$.

For $(i)$, we consider the same change of variable as previously. We obtain

\begin{multline}
(i) \leq \frac{C}{(t_\varepsilon-t)^{1/2-2\lambda}} \\
\cdot  \int_{r} \int_{z=0}^r \frac{(r+z)^4 |x_2-\nu_\varepsilon^2|}{z^{1/2-2\lambda} \left( (r+z)^2 + (x_2-\nu_\varepsilon^2)^2 + (t_\varepsilon-t) \right)^3 \left( (r-z)^2 + (x_2-\nu_\varepsilon^2)^2 + (t_\varepsilon-t) \right)^2} \\
\leq \frac{C}{(t_\varepsilon-t)^{1/2-2\lambda}} \int_r \int_{z=0}^{r/2} \frac{r^4 |x_2-\nu_\varepsilon^2| }{z^{1/2-2\lambda} \left( r^2 +(x_2-\nu_\varepsilon^2)^2 + (t_\varepsilon-t) \right)^5} \\
+ \frac{C}{(t_\varepsilon-t)^{1/2-2\lambda}} \int_{r}\int_{z=r/2}^r \frac{r^4 |x_2-\nu_\varepsilon|}{r^{1/2-2\lambda} \left( r^2 + (x_2-\nu_\varepsilon^2)^2 + (t_\varepsilon-t) \right)^3 \left( (r-z)^2 + (x_2-\nu_\varepsilon^2)^2 + (t_\varepsilon-t) \right)^2 }\\
= (i)_1 + (i)_2.
\end{multline}

For $(i)_1$, we get

\begin{multline}\label{mmmm}
(i)_1 \leq \frac{C}{(t_\varepsilon-t)^{1/2-2\lambda}} \int_r \frac{r^{4.5+2\lambda} |x_2-\nu_\varepsilon|}{(r^2 + (\nu_\varepsilon^2-x_2)^2 + (t_\varepsilon-t))^5} \\
\leq \frac{C}{(t_\varepsilon-t)^{1/2-2\lambda}} \frac{1}{\left( (x_2-\nu_\varepsilon^2)^2 + (t_\varepsilon-t) \right)^{9/4-3\lambda}}.
\end{multline}

Now, integrating (\ref{mmmm}) with respect to $x_2$ yields

\begin{equation}
\int_{x_2} |\psi_\varepsilon^2(x_2)| |(i)_1| \leq \frac{C}{(t_\varepsilon-t)^{7/4-3\lambda}},
\end{equation}

which is of a smaller order.

For $(i)_2$, we obtain

\begin{multline}\label{mmmmm}
(i)_2 \leq \frac{C}{(t_\varepsilon-t)^{1/2-2\lambda}} \int_r \frac{r^{3.5+2\lambda} |x_2-\nu_\varepsilon^2|}{\left( r^2 + (x_2 - \nu_\varepsilon^2)^2 + (t_\varepsilon-t) \right)^3} \\
\cdot \int_{z=0} \frac{1}{\left( z^2 + (x_2-\nu_\varepsilon^2)^2 + (t_\varepsilon-t) \right)^2}\\
\leq \frac{C}{(t_\varepsilon-t)^{1/2-2\lambda}} \int_r \frac{r^{3.5+2\lambda} |x_2-\nu_\varepsilon^2|}{\left( r^2 + (x_2 - \nu_\varepsilon^2)^2 + (t_\varepsilon-t) \right)^3} 
\cdot \frac{C}{((x_2-\nu_\varepsilon^2)^2 + (t_\varepsilon-t))^{3/2}}\\
\leq \frac{C}{(t_\varepsilon-t)^{1/2-2\lambda}} \frac{1}{\left( (x_2-\nu_\varepsilon^2)^2 + (t_\varepsilon-t) \right)^{9/4-3\lambda}}.
\end{multline}

Now, integrating (\ref{mmmmm}) with respect to $x_2$ yields

\begin{equation}
\int_{x_2} |\psi_\varepsilon^2(x_2)| |(i)_2| \leq \frac{C}{(t_\varepsilon-t)^{7/4-3\lambda}},
\end{equation}

which is of a smaller order. The same result holds for $(ii)$, $(iii)$ and $(iv)$. We go on and consider now $(v)$. We consider the same change of variable as previously. We obtain

\begin{multline}
(v) \leq \frac{C}{(t_\varepsilon-t)^{1/2-2\lambda}} \\
\cdot  \int_{r} \int_{z=0}^r \frac{(r+z)^2 |x_2-\nu_\varepsilon^2| (t_\varepsilon-t)}{z^{1/2-2\lambda} \left( (r+z)^2 + (x_2-\nu_\varepsilon^2)^2 + (t_\varepsilon-t) \right)^3 \left( (r-z)^2 + (x_2-\nu_\varepsilon^2)^2 + (t_\varepsilon-t) \right)^2} \\
\leq C (t_\varepsilon-t)^{1/2+2\lambda} \int_r \int_{z=0}^{r/2} \frac{r^2 |x_2-\nu_\varepsilon^2| }{z^{1/2-2\lambda} \left( r^2 +(x_2-\nu_\varepsilon^2)^2 + (t_\varepsilon-t) \right)^5} \\
+ C (t_\varepsilon-t)^{1/2+2\lambda} \int_{r}\int_{z=r/2}^r \frac{r^2 |x_2-\nu_\varepsilon|}{r^{1/2-2\lambda} \left( r^2 + (x_2-\nu_\varepsilon^2)^2 + (t_\varepsilon-t) \right)^3 \left( (r-z)^2 + (x_2-\nu_\varepsilon^2)^2 + (t_\varepsilon-t) \right)^2 }\\
= (v)_1 + (v)_2.
\end{multline}

For $(v)_1$, we get

\begin{multline}\label{mmmmmm}
(v)_1 \leq C (t_\varepsilon-t)^{1/2+2\lambda} \int_r \frac{r^{2.5+2\lambda} |x_2-\nu_\varepsilon|}{(r^2 + (\nu_\varepsilon^2-x_2)^2 + (t_\varepsilon-t))^5} \\
\leq C (t_\varepsilon-t)^{1/2+2\lambda} \frac{1}{\left( (x_2-\nu_\varepsilon^2)^2 + (t_\varepsilon-t) \right)^{13/4-\lambda}}.
\end{multline}

Now, integrating (\ref{mmmmmm}) with respect to $x_2$ yields

\begin{equation}
\int_{x_2} |\psi_\varepsilon^2(x_2)| |(v)_1| \leq \frac{C}{(t_\varepsilon-t)^{7/4-3\lambda}},
\end{equation}

which is of a smaller order.

For $(v)_2$, we obtain

\begin{multline}\label{mmmmmmm}
(v)_2 \leq C (t_\varepsilon-t)^{1/2+2\lambda} \int_r \frac{r^{1.5+2\lambda} |x_2-\nu_\varepsilon^2|}{\left( r^2 + (x_2 - \nu_\varepsilon^2)^2 + (t_\varepsilon-t) \right)^3} \\
\cdot \int_{z=0} \frac{1}{\left( z^2 + (x_2-\nu_\varepsilon^2)^2 + (t_\varepsilon-t) \right)^2}\\
\leq C (t_\varepsilon-t)^{1/2+2\lambda} \int_r \frac{r^{1.5+2\lambda} |x_2-\nu_\varepsilon^2|}{\left( r^2 + (x_2 - \nu_\varepsilon^2)^2 + (t_\varepsilon-t) \right)^3} 
\cdot \frac{C}{((x_2-\nu_\varepsilon^2)^2 + (t_\varepsilon-t))^{3/2}}\\
\leq C(t_\varepsilon-t)^{1/2+2\lambda} \frac{1}{\left( (x_2-\nu_\varepsilon^2)^2 + (t_\varepsilon-t) \right)^{13/4}}.
\end{multline}

Now, integrating (\ref{mmmmmmm}) th respect to $x_2$ yields

\begin{equation}
\int_{x_2} |\psi_\varepsilon^2(x_2)| |(v)_2| \leq \frac{C}{(t_\varepsilon-t)^{7/4-3\lambda}},
\end{equation}

which is of a smaller order. Lastly, we go on with $(vii)$.

\begin{multline}
(vii) \leq \frac{C}{(t_\varepsilon-t)^{1/2-2\lambda}} \\
\cdot  \int_{r} \int_{z=0}^r \frac{ |x_2-\nu_\varepsilon^2| (t_\varepsilon-t)^2}{z^{1/2-2\lambda} \left( (x_2-\nu_\varepsilon^2)^2 + (t_\varepsilon-t) \right)^3 \left( (r-z)^2 + (x_2-\nu_\varepsilon^2)^2 + (t_\varepsilon-t) \right)^2} \\
\leq C (t_\varepsilon-t)^{3/2+2\lambda} \int_r \int_{z=0}^{r/2} \frac{ |x_2-\nu_\varepsilon^2| }{z^{1/2-2\lambda} \left( r^2 +(x_2-\nu_\varepsilon^2)^2 + (t_\varepsilon-t) \right)^5} \\
+ C (t_\varepsilon-t)^{3/2+2\lambda} \int_{r}\int_{z=r/2}^r \frac{ |x_2-\nu_\varepsilon|}{r^{1/2-2\lambda} \left( r^2 + (x_2-\nu_\varepsilon^2)^2 + (t_\varepsilon-t) \right)^3 \left( (r-z)^2 + (x_2-\nu_\varepsilon^2)^2 + (t_\varepsilon-t) \right)^2 }\\
= (vii)_1 + (vii)_2.
\end{multline}

For $(v)_1$, we get

\begin{multline}\label{mmmmmmmm}
(vii)_1 \leq C (t_\varepsilon-t)^{3/2+2\lambda} \int_r \frac{r^{0.5+2\lambda} |x_2-\nu_\varepsilon|}{(r^2 + (\nu_\varepsilon^2-x_2)^2 + (t_\varepsilon-t))^5} \\
\leq C (t_\varepsilon-t)^{3/2+2\lambda} \frac{1}{\left( (x_2-\nu_\varepsilon^2)^2 + (t_\varepsilon-t) \right)^{17/4-\lambda}}.
\end{multline}

Now, integrating (\ref{mmmmmmmm}) with respect to $x_2$ yields

\begin{equation}
\int_{x_2} |\psi_\varepsilon^2(x_2)| |(vii)_1| \leq \frac{C}{(t_\varepsilon-t)^{13/4-3\lambda -3/2}} \leq \frac{C}{(t_\varepsilon-t)^{7/4-3\lambda}},
\end{equation}

which is of a smaller order.

For $(vii)_2$, we obtain

\begin{multline}\label{mmmmmmmmm}
(vii)_2 \leq C (t_\varepsilon-t)^{3/2+2\lambda} \int_r \frac{ |x_2-\nu_\varepsilon^2|}{r^{1/2-2\lambda}\left( r^2 + (x_2 - \nu_\varepsilon^2)^2 + (t_\varepsilon-t) \right)^3} \\
\cdot \int_{z=0} \frac{1}{\left( z^2 + (x_2-\nu_\varepsilon^2)^2 + (t_\varepsilon-t) \right)^2}\\
\leq C (t_\varepsilon-t)^{3/2+2\lambda} \int_r \frac{r^{-1/2+2\lambda} |x_2-\nu_\varepsilon^2|}{\left( r^2 + (x_2 - \nu_\varepsilon^2)^2 + (t_\varepsilon-t) \right)^3} 
\cdot \frac{C}{((x_2-\nu_\varepsilon^2)^2 + (t_\varepsilon-t))^{3/2}}\\
\leq C(t_\varepsilon-t)^{3/2+2\lambda} \frac{1}{\left( (x_2-\nu_\varepsilon^2)^2 + (t_\varepsilon-t) \right)^{17/4}}.
\end{multline}

Now, integrating (\ref{mmmmmmmmm}) with respect to $x_2$ yields

\begin{equation}
\int_{x_2} |\psi_\varepsilon^2(x_2)| |(vii)_2| \leq \frac{C}{(t_\varepsilon-t)^{7/4-3\lambda}},
\end{equation}

which is again of a smaller order.

\newpage

Now, we have established the lower bound for $(iv)_1$ in (\ref{touslesi}). The last step is to establish an upper bound for $(iv)_2$, $(iv)_3$ and $(iv)_4$ to conclude the proof. The case of $(iv)_2$ and $(iv)_3$ are similar. We start with $(iv)_2$.

We decompose as follows

\begin{multline}\label{desalphaslol}
(iv)_2 \leq C_\varepsilon \sum_{i+j+k=2} \int\int_\iota \frac{(x_1)^i |x_2-\nu_\varepsilon^2|^j (t_\varepsilon-t)^k}{\left| \phi(t,x_1,x_2)-\phi(t,y,x_2) \right|^{1/2-2\lambda}} \cdot \frac{|y| + |x_2-\nu_\varepsilon^2| + (t_\varepsilon-t) }{\phi_{x_1}(t,x_1,x_2)^2 \phi_{x_1}(t,y,x_2)^2} \\
= \alpha_1 + \alpha_2 + \alpha_3.
\end{multline}

We obtain

\begin{multline}\label{desalphaslol1}
\alpha_1 \leq C_\varepsilon \sum_{i+j+k=2} \int\int_\iota \frac{(x_1)^i |x_2-\nu_\varepsilon^2|^j (t_\varepsilon-t)^k}{\left| \phi(t,x_1,x_2)-\phi(t,y,x_2) \right|^{1/2-2\lambda}} \cdot \frac{|y|}{\phi_{x_1}(t,x_1,x_2)^2 \phi_{x_1}(t,y,x_2)^2} \\
\leq (t_\varepsilon-t)^{k-1/2+2\lambda} |x_2-\nu_\varepsilon^2|^j \\
\cdot \int\int_\iota \frac{(x_1)^i\cdot |y|}{\left| x_1 - y \right|^{1/2-2\lambda}\cdot \phi_{x_1}(t,x_1,x_2)^2 \phi_{x_1}(t,y,x_2)^2}
\end{multline}

Now, we use H\"older in (\ref{desalphaslol1}), 

\begin{multline}\label{encoredesalphaslol}
\alpha_1 \leq \sum_{i+j+k=2} C_\varepsilon (t_\varepsilon-t)^{k-1/2+2\lambda} |x_2-\nu_\varepsilon^2|^j \left( \int\int_\iota \frac{1}{|x_1-y|^{3/4-3\lambda}} \right)^{2/3}\\
 \cdot \left( \int\int_\iota \frac{(x_1)^{3i}}{ \left( x_1^2 + (x_2-\nu_\varepsilon^2)^2 + (t_\varepsilon-t) \right)^6 } \right)^{1/3}
 \cdot \left( \int\int_\iota \frac{(y)^{3}}{ \left( x_1^2 + (x_2-\nu_\varepsilon^2)^2 + (t_\varepsilon-t) \right)^6 } \right)^{1/3} \\
 \leq \sum_{i+j+k=2} C_\varepsilon \frac{(t_\varepsilon-t)^{k-1/2+2\lambda} |x_2-\nu_\varepsilon|^j}{ \left( (x_2-\nu_\varepsilon)^2 + (t_\varepsilon-t) \right)^{19/6-i/2}}.
\end{multline}

To do this, we studied the two last integrals. In particular, we give the following estimates.

\begin{equation}\label{qqintegrales}
\begin{aligned}
&\left( \int_0^\infty \frac{1}{(x^2+a)^6}\right)^{1/3} \leq \frac{C_\varepsilon}{a^{11/6}}, \\
&\left( \int_0^\infty \frac{x^3}{(x^2+a)^6}\right)^{1/3} \leq \frac{C_\varepsilon}{a^{4/3}}, \\
&\left( \int_0^\infty \frac{x^6}{(x^2+a)^6}\right)^{1/3} \leq \frac{C_\varepsilon}{a^{5/6}}. \\
\end{aligned}
\end{equation}

Integrating (\ref{encoredesalphaslol}) with respect to $x_2$ yields

\begin{equation}
\int_{x_2} \alpha_1 \leq \sum_{i+j+k=2} \frac{1}{(t_\varepsilon-t)^{32/12 - i/2 - j/2-k +1/2 -2 \lambda}}.
\end{equation}

Given the fact that $i+j+k = 2$, the highest order possible is $26/12$, which is indeed smaller than $9/4=27/12$. We will now go through $\alpha_2$ and $\alpha_3$. Because they are somehow similar, we will skip a few details.

For $\alpha_2$, we obtain

\begin{multline}\label{encoredesalphaslol2}
\alpha_2 \leq \sum_{i+j+k=2} C_\varepsilon (t_\varepsilon-t)^{k-1/2+2\lambda} |x_2-\nu_\varepsilon^2|^{j+1} \left( \int\int_\iota \frac{1}{|x_1-y|^{3/4-3\lambda}} \right)^{2/3}\\
 \cdot \left( \int\int_\iota \frac{(x_1)^{3i}}{ \left( x_1^2 + (x_2-\nu_\varepsilon^2)^2 + (t_\varepsilon-t) \right)^6 } \right)^{1/3}
 \cdot \left( \int\int_\iota \frac{1}{ \left( x_1^2 + (x_2-\nu_\varepsilon^2)^2 + (t_\varepsilon-t) \right)^6 } \right)^{1/3} \\
 \leq \sum_{i+j+k=2} C_\varepsilon \frac{(t_\varepsilon-t)^{k-1/2+2\lambda} |x_2-\nu_\varepsilon|^{j+1}}{ \left( (x_2-\nu_\varepsilon)^2 + (t_\varepsilon-t) \right)^{22/6-i/2}}.
\end{multline}

Integrating (\ref{encoredesalphaslol2}) with respect to $x_2$ yields

\begin{equation}
\int_{x_2} \alpha_2 \leq \sum_{i+j+k=2} \frac{1}{(t_\varepsilon-t)^{19/6 - i/2 - (j+1)/2-k +1/2 -2 \lambda}}.
\end{equation}

Given the fact that $i+j+k = 2$, the highest order possible is $26/12$, which is indeed smaller than $9/4=27/12$. We will now go through $\alpha_2$ and $\alpha_3$. Because they are somehow similar, we will skip a few details.

For $\alpha_3$, we obtain

\begin{multline}\label{encoredesalphaslol3}
\alpha_2 \leq \sum_{i+j+k=2} C_\varepsilon (t_\varepsilon-t)^{k+1-1/2+2\lambda} |x_2-\nu_\varepsilon^2|^{j} \left( \int\int_\iota \frac{1}{|x_1-y|^{3/4-3\lambda}} \right)^{2/3}\\
 \cdot \left( \int\int_\iota \frac{(x_1)^{3i}}{ \left( x_1^2 + (x_2-\nu_\varepsilon^2)^2 + (t_\varepsilon-t) \right)^6 } \right)^{1/3}
 \cdot \left( \int\int_\iota \frac{1}{ \left( x_1^2 + (x_2-\nu_\varepsilon^2)^2 + (t_\varepsilon-t) \right)^6 } \right)^{1/3} \\
 \leq \sum_{i+j+k=2} C_\varepsilon \frac{(t_\varepsilon-t)^{k+1-1/2+2\lambda} |x_2-\nu_\varepsilon|^{j}}{ \left( (x_2-\nu_\varepsilon)^2 + (t_\varepsilon-t) \right)^{22/6-i/2}}.
\end{multline}

Integrating (\ref{encoredesalphaslol3}) with respect to $x_2$ yields

\begin{equation}
\int_{x_2} \alpha_2 \leq \sum_{i+j+k=2} \frac{1}{(t_\varepsilon-t)^{19/6 - i/2 - j/2-k-1 +1/2 -2 \lambda}}.
\end{equation}

Given the fact that $i+j+k = 2$, the highest order possible is $20/12$, which is indeed smaller than $9/4=27/12$. We will now go through $\alpha_2$ and $\alpha_3$. Because they are somehow similar, we will skip a few details.

We now look at $(iv)_4$, the expression being given by (\ref{touslesi}). We establish an upper bound.

\begin{multline}\label{leli4}
(iv)_4 \leq C_\varepsilon \sum\limits_{\substack{i_1+j_1+k_1=2 \\ i_2+j_2+k_2=2}} (t_\varepsilon-t)^{k_1+k_2} \int\int_\iota \frac{|x_1|^{i_1} |y|^{i_2} |x_2-\nu_\varepsilon^2|^{j_1+j_2}}{|\phi(t,\nu_\varepsilon^1+x_1,x_2) - \phi(t,\nu_\varepsilon^1+y,x_2)|^{1/2-2\lambda} \phi_{x_1}^2(x_1) \phi_{x_1}^2(y)} \\
\leq  C_\varepsilon \sum\limits_{\substack{i_1+j_1+k_1=2 \\ i_2+j_2+k_2=2}} \frac{(t_\varepsilon-t)^{k_1+k_2}}{(t_\varepsilon-t)^{1/2-2\lambda}} \int\int_\iota \frac{|x_1|^{i_1} |y|^{i_2} |x_2-\nu_\varepsilon^2|^{j_1+j_2}}{|x_1-y|^{1/2-2\lambda}\phi_{x_1}^2(x_1) \phi_{x_1}^2(y)}
\end{multline}

Now, we use H\"older's inequality in (\ref{leli4}) and simplify, 

\begin{multline}\label{lelp}
(iv)_4 \leq C_\varepsilon  \sum\limits_{\substack{i_1+j_1+k_1=2 \\ i_2+j_2+k_2=2}} (t_\varepsilon-t)^{k_1+k_1-1/2+2\lambda} |x_2-\nu_\varepsilon^2|^{j_1+j_2} \int\int_\iota \frac{|x_1|^{i_1} |y|^{i_2}}{|x_1-y|^{1/2-2\lambda}\phi_{x_1}^2(x_1) \phi_{x_1}^2(y)}\\
\leq \sum\limits_{\substack{i_1+j_1+k_1=2 \\ i_2+j_2+k_2=2}}  C_\varepsilon (t_\varepsilon-t)^{k_1+k_1-1/2+2\lambda} |x_2-\nu_\varepsilon^2|^{j_1+j_2} \left( \int\int_\iota \frac{1}{|x_1-y|^{3/4-3\lambda}} \right)^{2/3} \\
\cdot \left( \int_0^\infty \frac{x_1^{3i_1}}{\left( x_1^2 + (x_2-\nu_\varepsilon^2)^2 + (t_\varepsilon-t) \right)^6} \right)^{1/3} \cdot \left( \int_0^\infty \frac{y^{3i_2}}{\left( y^2 + (x_2-\nu_\varepsilon^2)^2 + (t_\varepsilon-t) \right)^6} \right)^{1/3}
\end{multline}

From (\ref{lelp}) and (\ref{qqintegrales}), we have 

\begin{multline}\label{lelp2}
(iv)_4 \leq \sum\limits_{\substack{i_1+j_1+k_1=2 \\ i_2+j_2+k_2=2}}  \frac{ C_\varepsilon (t_\varepsilon-t)^{k_1+k_1-1/2+2\lambda} |x_2-\nu_\varepsilon^2|^{j_1+j_2}}{\left( (t_\varepsilon-t) + (\nu_\varepsilon^2-x_2)^2\right)^{\frac{11}{6}-\frac{3i_1}{6}} \left( (t_\varepsilon-t) + (\nu_\varepsilon^2-x_2)^2\right)^{\frac{11}{6}-\frac{3i_2}{6}}}\\
\leq \sum\limits_{\substack{i_1+j_1+k_1=2 \\ i_2+j_2+k_2=2}}  \frac{ C_\varepsilon (t_\varepsilon-t)^{k_1+k_1-1/2+2\lambda} |x_2-\nu_\varepsilon^2|^{j_1+j_2}}{\left( (t_\varepsilon-t) + (\nu_\varepsilon^2-x_2)^2\right)^{\frac{22}{6}-3 \frac{i_1+i_2}{6}} } \\
\leq \sum\limits_{\substack{i_1+j_1+k_1=2 \\ i_2+j_2+k_2=2}} \frac{C_\varepsilon (t_\varepsilon-t)^{k_1+k_2-1/2+2\lambda}}{\left( (t_\varepsilon-t) + (\nu_\varepsilon^2-x_2)^2 \right)^{\frac{22}{6}- \frac{i_1+j_1+i_2+j_2}{2}}}
\end{multline}

We will distinguish the different cases.
First, if $k_1+k_2=0$, then $i_1+i_2+j_1+j_2=4$ and we have

\begin{equation}\label{encoreune}
\frac{C_\varepsilon (t_\varepsilon-t)^{k_1+k_2-1/2+2\lambda}}{\left( (t_\varepsilon-t) + (\nu_\varepsilon^2-x_2)^2 \right)^{\frac{22}{6}- \frac{i_1+j_1+i_2+j_2}{2}}} = \frac{C_\varepsilon}{(t_\varepsilon-t)^{1/2-2\lambda}} \frac{1}{\left( (t_\varepsilon-t) + (\nu_\varepsilon^2-x_2)^2 \right)^{10/6}},
\end{equation}

hence, integrating (\ref{encoreune}) yields

\begin{multline}\label{encoreune1}
\left| \int_{x_2=\nu_\varepsilon^2-2\delta_\varepsilon}^{\nu_\varepsilon^2+2\delta_\varepsilon} \psi_\varepsilon^2(x_2) \frac{C_\varepsilon}{(t_\varepsilon-t)^{1/2-2\lambda}} \frac{1}{\left( (t_\varepsilon-t) + (\nu_\varepsilon^2-x_2)^2 \right)^{10/6}} \right| \\
\leq \frac{C_\varepsilon}{(t_\varepsilon-t)^{7/6+1/2-2\lambda}} << \frac{1}{(t_\varepsilon-t)^{9/4-3\lambda}}.
\end{multline}

If $k_1+k_2=1$, then $i_1+i_2+j_1+j_2=3$ and we have

\begin{equation}\label{encoreune2}
\frac{C_\varepsilon (t_\varepsilon-t)^{k_1+k_2-1/2+2\lambda}}{\left( (t_\varepsilon-t) + (\nu_\varepsilon^2-x_2)^2 \right)^{\frac{22}{6}- \frac{i_1+j_1+i_2+j_2}{2}}} =  \frac{C_\varepsilon (t_\varepsilon-t)^{1/2+2\lambda}}{\left( (t_\varepsilon-t) + (\nu_\varepsilon^2-x_2)^2 \right)^{13/6}},
\end{equation}

hence, integrating (\ref{encoreune2}) yields

\begin{multline}\label{encoreune3}
\left| \int_{x_2=\nu_\varepsilon^2-2\delta_\varepsilon}^{\nu_\varepsilon^2+2\delta_\varepsilon} \psi_\varepsilon^2(x_2) \frac{C_\varepsilon (t_\varepsilon-t)^{1/2+2\lambda}}{\left( (t_\varepsilon-t) + (\nu_\varepsilon^2-x_2)^2 \right)^{13/6}} \right| \\
\leq \frac{C_\varepsilon}{(t_\varepsilon-t)^{7/6-2\lambda}} << \frac{1}{(t_\varepsilon-t)^{9/4-3\lambda}}.
\end{multline}

If $k_1+k_2=2$, then $i_1+i_2+j_1+j_2=2$ and we have

\begin{equation}\label{encoreune4}
\frac{C_\varepsilon (t_\varepsilon-t)^{k_1+k_2-1/2+2\lambda}}{\left( (t_\varepsilon-t) + (\nu_\varepsilon^2-x_2)^2 \right)^{\frac{22}{6}- \frac{i_1+j_1+i_2+j_2}{2}}} =  \frac{C_\varepsilon (t_\varepsilon-t)^{3/2+2\lambda}}{\left( (t_\varepsilon-t) + (\nu_\varepsilon^2-x_2)^2 \right)^{16/6}},
\end{equation}

hence, integrating (\ref{encoreune4}) yields

\begin{multline}\label{encoreune5}
\left| \int_{x_2=\nu_\varepsilon^2-2\delta_\varepsilon}^{\nu_\varepsilon^2+2\delta_\varepsilon} \psi_\varepsilon^2(x_2) \frac{C_\varepsilon (t_\varepsilon-t)^{3/2+2\lambda}}{\left( (t_\varepsilon-t) + (\nu_\varepsilon^2-x_2)^2 \right)^{16/6}} \right| \\
\leq \frac{C_\varepsilon}{(t_\varepsilon-t)^{4/6-2\lambda}} << \frac{1}{(t_\varepsilon-t)^{9/4-3\lambda}}.
\end{multline}

Now, we deduce from the same computations that the term corresponding to $k_1+k_2=3$ is smaller than $\frac{C_\varepsilon}{(t_\varepsilon-t)^{1/6}}$ and that the term corresponding to $k_1+k_2=4$ is bounded.

Now, we use (\ref{lelp2}) to conclude for the term corresponding to $(iv)_4$ in (\ref{desix2}).

\begin{equation}
\left| \int_{x_2=\nu_\varepsilon^2-2\delta_\varepsilon}^{\nu_\varepsilon^2+2\delta_\varepsilon} \psi_\varepsilon^2(x_2) (iv)_4\right| << \frac{C_\varepsilon}{(t_\varepsilon-t)^{9/4-3\lambda}}.
\end{equation}

This concludes this proof.

\end{proof}

\subsection{Introducing a perturbation of the initial data}

In this chapter, we will study the stability of the ill-posedness of our model equation with respect to a perturbation of the initial conditions. We will not go into the full proof, but rather try to convince the reader that the one can write a similar proof as to the one we did in the previous section, after having established some preliminary results. Note that the case we are studying here is not the full stability of the blow-up with respect to a perturbation of the initial condition of the problem (\ref{modelequation}), as we did not introduce the term $\partial_{x_2}^2$ here, which means it is not of the form (\ref{QLW}).
More precisely, we consider the Cauchy problem

\begin{equation}\label{cellecimodifiee1}
\left\{
\begin{aligned}
&\Box u = Du D^2 u + f(t,x_1,x_2,u) , \\
&u_{|t=0} = \tilde{u}_0, ~~ \frac{\partial u}{\partial t}_{|t=0} =-\chi(x_1) + \tilde{u}_1\\
\end{aligned},
\right. ~~~(MQLW2)
\end{equation}

and the corresponding regularized Cauchy problem

\begin{equation}\label{cellecimodifiee2}
\left\{
\begin{aligned}
&\Box u = Du D^2 u + f(t,x_1,x_2,u) , \\
&u_{|t=0} = \tilde{u}_0, ~~ \frac{\partial u}{\partial t}_{|t=0} =-\chi_\varepsilon(x_1) + \tilde{u}_1,\\
\end{aligned}
\right. ~~~(MQLW2)_\varepsilon
\end{equation}

where $\chi_\varepsilon$ is defined as in (\ref{chilol}). We will denote $D\tilde{u}(x_1,x_2) = \tilde{v}(x_1,x_2) = \partial_{x_1} \tilde{u}_0 - \tilde{u_1}.$ We also assume the two following conditions

\begin{equation}\label{cellecimodifiee3}
\begin{aligned}
&(i)~ \forall \alpha, ~ \left| \frac{\partial^\alpha f}{\partial_x^\alpha} \right| \leq C,\\
&(ii)~ (\tilde{u}_0,\tilde{u}_1) \in H^{4}(\mathbb{R}^2) \times H^{3}(\mathbb{R}^2).
\end{aligned}
\end{equation}

In this chapter, we do not go through all the previous computations as they will still hold. Instead, we are going to prove an equivalent of lemma \ref{laconditionimportante} and the result will follow.

Now, rewriting (\ref{cellecimodifiee2}) with $Du=v$ yields

\begin{equation} \label{cellecimodifiee4}
\left\{
\begin{aligned}
&\left( \partial_t + \frac{1+v}{1-v} \partial_{x_1} \right) v  = \frac{f(t,x_1,x_2,v)}{1-v} = g(t,x_1,x_2,v). \\
&\frac{\partial u}{\partial t}_{|t=0} = - \chi + \tilde{u}_1, \hspace{0.2cm} u_{|t=0} = \tilde{u}_0,
\end{aligned}
\right.
\end{equation}

where $g$ also satisfies $(i)$ of (\ref{cellecimodifiee3}) as $v<0$. We now state the main theorem of this chapter.

\begin{theorem}\label{quatriemecas}

Let $u_\varepsilon$ be the solution to problem (\ref{cellecimodifiee2}).
There exists a time $t_\varepsilon$ such that 
\begin{equation}
\left\{
\begin{aligned}
&||u_\varepsilon(0,\cdot)||_{H^{11/4}(\mathbb{R}^2)} < \infty \\
&||u_\varepsilon(t,\cdot)||_{H^{11/4}(\mathbb{R}^2)} \rightarrow \infty \hspace{0.2cm} \text{ as } t\rightarrow t_\varepsilon.
\end{aligned}
\right.
\end{equation}

Also, we have that 

\begin{equation}
t_\varepsilon \rightarrow 0 \hspace{0.2cm} \text{ as } \varepsilon \rightarrow 0.
\end{equation}

\end{theorem}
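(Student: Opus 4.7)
The strategy is to reduce Theorem \ref{quatriemecas} to an analogue of Lemma \ref{laconditionimportante} for the perturbed problem, as indicated by the author in the paragraph preceding the theorem. Once that lemma is in hand, the cutoff construction (\ref{psi1lol})--(\ref{troisiemecaspsi2}) and the delicate decomposition of $I_\varepsilon$ into outer and inner regions, followed by the refined split of the middle integral into $(iv)_1^1, \ldots, (iv)_4$ as in the proof of Theorem \ref{troisiemecas}, goes through verbatim: each step uses only the structural estimates on $\phi_{x_1}$, $\phi_{x_1 x_1}$, $A$ and $B$ summarized in Lemma \ref{laconditionimportante}, together with the sign information coming from $\chi' \leq 0$ and $\chi'' > 0$ for $x_1 \geq \varepsilon$.

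First I would rewrite (\ref{cellecimodifiee4}) along the characteristic $\phi$ defined exactly as in (\ref{defphiproblem3}). Integrating (\ref{diffvvv}) now yields the perturbed identity
\begin{equation}
v(t, \phi, x_2) = \chi(x_1) + \tilde{v}(x_1, x_2) + \int_0^t g(\tau, \phi, x_2, v)\, d\tau,
\end{equation}
and successive differentiation in $x_1$ produces analogues of (\ref{vvvx}) and (\ref{vvvxx}) with additional boundary terms $\tilde{v}_{x_1}$ and $\tilde{v}_{x_1 x_1}$ on the right-hand sides. Because $(\tilde{u}_0, \tilde{u}_1) \in H^4 \times H^3$, we have $\tilde{v} \in H^3(\mathbb{R}^2)$, and Sobolev embedding in dimension two yields uniform pointwise control of $\tilde{v}$ and $\tilde{v}_{x_1}$, while $\tilde{v}_{x_1 x_1} \in H^1 \subset L^p_{\mathrm{loc}}$ for every finite $p$. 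Crucially, all these bounds are independent of $\varepsilon$, whereas $|\chi'(\varepsilon)| = |\ln \varepsilon|^\alpha$ diverges; this quantitative gap is what allows the perturbation to be absorbed at every step.

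Next I would rerun the bootstrap argument of Lemma \ref{laconditionimportante}. Assuming $|\phi_{x_1}| \leq 2$ on an interval, Grönwall applied to the perturbed analogue of (\ref{vvvx}) gives $|\phi_{x_1} v_{x_1}| \leq C(|\ln \varepsilon|^\alpha + \|\tilde{v}_{x_1}\|_{L^\infty}) e^{C/|\ln|\ln\varepsilon||}$; the new $O(1)$ contribution is negligible compared with the $|\ln\varepsilon|^\alpha$ term for $\varepsilon$ small. The sign computation (\ref{ceciestunlabel}) still forces $\phi_{t x_1} < 0$, so a time $t_\varepsilon \leq 4/|\ln \varepsilon|^\alpha$ and a critical point $(\nu_\varepsilon^1, \nu_\varepsilon^2)$ with $\phi_{x_1}(t_\varepsilon, \nu_\varepsilon^1, \nu_\varepsilon^2) = 0$ exist, and the Taylor expansion around this minimum reproduces (\ref{estestestphix}). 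Writing $v_{x_1 x_1} = A - B$ as in (\ref{vvvxx}), the extra boundary contribution of $\tilde{v}_{x_1 x_1}$ enters the Grönwall loop only through an integral in $\tau$, which by $L^p_{\mathrm{loc}}$ control gives an $\varepsilon$-independent $O(1)$ term that is absorbed into the $|\ln(t_\varepsilon-t)|/\phi_{x_1}^2$ bound (\ref{superestA}) for $A$. The leading structure of $B$ is unchanged, with the same multiplicative corrections $o_\varepsilon(\chi')$ as in (\ref{kk1})--(\ref{kk2}), so (\ref{superestB}) persists.

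The hard part will be rerunning the secondary lemma that ensures $\nu_\varepsilon^1 \notin [\varepsilon/2, c_0]$, because the bound on $\phi_{x_1}$ there now carries an additional $\|\tilde{v}_{x_1}\|_{L^\infty}$ contribution and must still prevent $\phi_{x_1}$ from reaching zero on $[0, 4/|\ln\varepsilon|^\alpha]$. The inequality $|\phi_{x_1} v_{x_1}| \leq |\ln\varepsilon|^\alpha/8 + O(1)$ combined with $\phi_{x_1}|_{t=0} = 1$ yields $\phi_{x_1} \geq 1/2 - o_\varepsilon(1) > 0$ on that interval, so the cutoff construction remains valid. Once the analogue of Lemma \ref{laconditionimportante} is established, I set $h = \psi_\varepsilon^1 \psi_\varepsilon^2 v$ and repeat the proof of Theorem \ref{troisiemecas}: split $I_\varepsilon(t, x_2)$ into three $x_1$-regions, bound the outer integrals $I_\varepsilon^1$ and $I_\varepsilon^3$ by $C_\varepsilon |\ln(t_\varepsilon - t)|^2/(t_\varepsilon - t)$ using (\ref{superestA})--(\ref{superestB}), and for the middle integral apply the $(iv)_1^1$ versus $(iv)_2, (iv)_3, (iv)_4$ decomposition to extract the divergent lower bound $C_\varepsilon (t_\varepsilon-t)^{-9/4+3\lambda}$. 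The convergence $t_\varepsilon \to 0$ as $\varepsilon \to 0$ is immediate from (\ref{estblowup}).
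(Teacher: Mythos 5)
Your proposal follows the paper's strategy exactly: state and prove an analogue of the technical lemma (the paper's Lemma \ref{modifiee11}), then feed it into the same cutoff construction and $I_\varepsilon^1,I_\varepsilon^2,I_\varepsilon^3$ / $(iv)_1,\ldots,(iv)_4$ machinery from the proof of Theorem \ref{troisiemecas}. The paper in fact only states the lemma and declares the remainder identical; you fill in the observation that the new boundary terms $\tilde v, \tilde v_{x_1}, \tilde v_{x_1x_1}$ are $\varepsilon$-independent while $\chi'(\varepsilon)=-|\ln\varepsilon|^\alpha$ diverges, which is indeed the driving mechanism.

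One point in your sketch is misstated and deserves care. You claim that ``the extra boundary contribution of $\tilde v_{x_1x_1}$ enters the Grönwall loop only through an integral in $\tau$.'' Inspecting (\ref{modifiee8}), the term $\partial_{x_1}^2\tilde v$ appears as a direct, non-integrated summand inside the bracket defining $A$, sitting next to $\chi''(x_1)$; it is a $t=0$ boundary term, not a $\tau$-integrand. To carry over the estimate $|A|\le C_\varepsilon|\ln(t_\varepsilon-t)|/\phi_{x_1}^2$ of (\ref{superestA12}) verbatim one would need $\tilde v_{x_1x_1}\in L^\infty$, but $\tilde v\in H^3(\mathbb{R}^2)$ only yields $\tilde v_{x_1x_1}\in H^1(\mathbb{R}^2)$, which does not embed in $L^\infty$ in dimension two (only $\tilde v$ and $\tilde v_{x_1}$ get uniform pointwise control as you correctly note). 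So your stated mechanism for absorbing this term does not match the algebraic structure of the equation. Either a slightly stronger regularity assumption (e.g.\ $H^{4+\delta}\times H^{3+\delta}$) is needed to bound $\tilde v_{x_1x_1}$ pointwise, or one must rework the blow-up estimates to use only $L^p$ rather than pointwise control on this term. The paper's Lemma \ref{modifiee11} silently makes the same pointwise claim, so this is also a gap there; but since your sketch explicitly gives an incorrect justification for it, you should repair that step rather than rely on the claim as written.
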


We first do some preliminary work, and later state and prove an equivalent of lemma \ref{laconditionimportante} for this case.

\textbf{Preliminary work}

As in \cite{ohlmann2021illposedness}, we define $\phi$ (implicitly depending on $\varepsilon$) the same way as for the unperturbed problem,

\begin{equation}\label{cellecimodifiee5}
\left\{
\begin{aligned}
&\phi(0,x_1,x_2) = x_1 \\
& \partial_t \phi(t,x_1,x_2) = \frac{1+v(t,\phi(t,x_1,x_2),x_2)}{1-v(t,\phi(t,x_1,x_2),x_2)}.
\end{aligned}
\right.
\end{equation}

From (\ref{cellecimodifiee4}) and (\ref{defphiproblem3}), we get

\begin{equation}\label{cellecimodifiee6}
\frac{\partial}{\partial_t} \left(v(t,\phi(t,x_1,x_2),x_2) \right) = g(t,\phi(t,x_1,x_2),x_2,v(t,\phi(t,x_1,x_2),x_2)).
\end{equation}

From now on, we will not specify every variable, but only if the function is applied to $x_1$ of $\phi(t,x_1,x_2)$. Integrating (\ref{cellecimodifiee6}) yields

\begin{equation}\label{cellecimodifiee7}
v(\phi)= \chi(x_1) + \tilde{v}(x_1,x_2) + \int_{\tau=0}^t g(\tau,\phi) d\tau. 
\end{equation}

Differentiating (\ref{cellecimodifiee7}) leads to the following expressions for the derivatives of $v$.

\begin{multline}\label{modifiee7}
\partial_{x_1} \phi(x_1) \partial_{x_1} v(\phi) = \chi'(x_1) + \partial_{x_1} \tilde{v} + \int_\tau \partial_{x_1} \phi(\tau,x_1) \partial_1 g(\tau,x_1) + \int_\tau \partial_{x_1} \phi(\tau,x_1) \partial_{x_1} v(\tau,x_1) \partial_3 g(\tau,x_1) \\
\Rightarrow \partial_{x_1} v(\phi)= \frac{1}{\partial_{x_1} \phi(\tau,x_1)} \Big[ \chi'(x_1) + \partial_{x_1} \tilde{v} \\ + \int_\tau \partial_{x_1} \phi(\tau,x_1) \partial_1 g(\tau,\phi) + \int_\tau \partial_{x_1} \phi(x_1) \partial_{x_1} v(\tau,x_1) \partial_3 g(\tau,x_1)  \Big].
\end{multline}

\begin{multline}\label{modifiee8}
\partial_{x_1}^2 v(\phi) = \frac{1}{\left( \partial_{x_1} \phi(x_1) \right)^2} \Big[ \chi''(x_1) +\partial_{x_1}^2 \tilde{v} + \int_{\tau} \left( \partial_{x_1} \phi(\tau,x_1) \right)^2\partial_{1}^2 g(\tau,\phi) + \int_\tau \partial_{x_1}^2 \phi(\tau,x_1) \partial_{1} g(\tau,\phi) \\
+ \int_\tau \left( \partial_{x_1} \phi(\tau,x_1) \right)^2 \partial_{x_1} v(\tau,\phi) \partial_3 \partial_1 g(\tau,\phi) + \int_\tau \partial_{x_1}^2 \phi(\tau,x_1) \partial_3 g(\tau,\phi) \partial_{x_1} v(\tau,\phi) \\
+ \int_\tau \left( \partial_{x_1} \phi(\tau,x_1) \right)^2 \partial_1 \partial_3 g(\tau,\phi) \partial_{x_1} v(\tau,\phi)  
+ \int_\tau \left( \partial_{x_1} \phi(\tau,x_1) \right)^2 \left( \partial_{x_1} v(\tau,\phi) \right)^2 \partial_3^2 g(\tau,\phi)  \\+ \int_\tau \left( \partial_{x_1} \phi(\tau,x_1) \right)^2  \partial_{x_1}^2 v(\tau,\phi) \partial_3 g(\tau,\phi)  \Big] \\
- \frac{ \partial_{x_1}^2 \phi(x_1)}{\left( \partial_{x_1} \phi(x_1) \right)^3 } \Big[ \chi'(x_1) + \partial_{x_1} \tilde{v}+  \int_\tau \partial_{x_1} \phi(\tau,x_1) \partial_1 g(\tau,\phi) + \int_\tau \partial_{x_1} \phi(\tau,x_1) \partial_3 g(\tau,\phi) \partial_{x_1} v(\tau,\phi) \Big]\\
= A - B.
\end{multline}

Again, we have

\begin{equation}\label{modifiee9}
\phi_{tx_1}(x_1) = \frac{2 v_{x_1}(\phi)}{(1-v(\phi))^2}.
\end{equation}

\begin{equation}\label{modifiee10}
\phi_{tx_1x_1}(x_1) = \frac{2 v_{x_1x_1}(\phi) (1-v(\phi)) + 2 v_{x_1}^2(\phi)}{(1-v(\phi))^3}.
\end{equation}

We now state the lemma. It is in fact identical to lemma \ref{laconditionimportante}

\begin{lemma}\label{modifiee11}
There exists a time $t_\varepsilon$ such that the following properties are verified.

\begin{equation}\label{teps1lol}
t_\varepsilon \rightarrow 0 \hspace{0.2cm} \text{ as } \varepsilon \rightarrow 0. 
\end{equation}

\begin{equation}\label{teps2lol}
\begin{aligned}
&\phi_{x_1} > 0 \hspace{0.2cm} \forall t<t_\varepsilon\\
&\exists (\nu_\varepsilon^1, \nu_\varepsilon^2) \hspace{0.2cm} s.t. \hspace{0.2cm} \phi_{x_1}(t_\varepsilon,\nu_\varepsilon^1,\nu_\varepsilon^2) = 0
\end{aligned}
\end{equation}

\begin{equation}
\begin{aligned}
&0 < \phi_{x_1} < 1, \hspace{0.1cm} \phi_{t,x_1} <0 \hspace{0.2cm}, \forall t<t_\varepsilon, \\
&\exists C_\varepsilon, \hspace{0.2cm} |\phi_{t,x_1,x_1}| \leq C_\varepsilon |\ln(t_\varepsilon-t)|. \\
&v_{x_1} <0, \hspace{0.2cm} v<0\\
&\exists C_\varepsilon, \hspace{0.2cm} |v_{x_1}(\phi)| \leq \frac{C_\varepsilon(\chi'(x_1) + C_\varepsilon)}{|\phi_{x_1}|}.
\end{aligned}
\end{equation}

If $x_1$, $x_2$ and $t$ are sufficiently close to $(\nu_\varepsilon^1,\nu_\varepsilon^2,t_\varepsilon)$, we have the following estimates.

\begin{multline}\label{modifiee12}
\exists C_\varepsilon^1,C_\varepsilon^2 >0, \hspace{0.2cm} \frac{C_\varepsilon^1}{(x_1-\nu_\varepsilon^1)^2 + (x_2-\nu_\varepsilon^2)^2 + (t_\varepsilon-t)} \leq \frac{1}{\phi_{x_1}(x_1)} \\ \leq 
\frac{C_\varepsilon^2}{(x_1-\nu_\varepsilon^1)^2 + (x_2-\nu_\varepsilon^2)^2 + (t_\varepsilon-t)}\\
\exists C_\varepsilon^1, C_\varepsilon^2, C_\varepsilon^3, \hspace{0.2cm} \phi_{x_1x_1}(x_1) = C_\varepsilon^1 (x_1-\nu_\varepsilon^1) + C_\varepsilon^2 (x_2-\nu_\varepsilon^2) + C_\varepsilon^3 (t_\varepsilon-t) \\
+ \sum_{i+j+k=2} (x_1-\nu_\varepsilon^1)^i (x_2-\nu_\varepsilon^2)^j (t_\varepsilon-t)^k f_{i,j,k}(t,x_1,x_2),
\end{multline}

where all the involved $f$ functions are bounded near $(t,\nu_\varepsilon^1,\nu_\varepsilon^2)$. We also have

\begin{equation}\label{superestA12}
|A| \leq \frac{C_\varepsilon |\ln(t_\varepsilon-t)|}{|\phi_{x_1}|^2},
\end{equation}

where $A$ is the $A$ involved in (\ref{modifiee8}).

Lastly, for $\varepsilon$ small enough, we have 

\begin{equation}\label{superestB13}
\begin{aligned}
&\frac{C_\varepsilon \phi_{x_1x_1}(x_1)}{\left( \phi_{x_1}(x_1) \right)^3} \frac{\chi'}{2} \leq B \leq \frac{C_\varepsilon \phi_{x_1x_1}(x_1)}{\left( \phi_{x_1}(x_1) \right)^3} 2\chi', \\
&\frac{C_\varepsilon \phi_{x_1x_1}(x_1)}{\left( \phi_{x_1}(x_1) \right)^3} 2\chi' \leq B \leq \frac{C_\varepsilon \phi_{x_1x_1}(x_1)}{\left( \phi_{x_1}(x_1) \right)^3} \frac{\chi'}{2}.
\end{aligned}
\end{equation}

The first inequality being verified when $\phi_{x_1x_1} \leq 0$, and the second being verified when $\phi_{x_1x_1} \geq 0$.

\end{lemma}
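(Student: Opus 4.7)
The plan is to mirror the proof of Lemma~\ref{laconditionimportante} step by step, treating the perturbation $\tilde{v} = \partial_{x_1}\tilde{u}_0 - \tilde{u}_1$ as a bounded nuisance that is swamped by the $|\ln(\varepsilon)|^\alpha$ growth of $\chi'$. The starting point is the observation that from $(\tilde{u}_0,\tilde{u}_1)\in H^{4}\times H^{3}$ together with Sobolev embedding in two dimensions, one obtains uniform (in $\varepsilon$, since $\tilde{u}_0,\tilde{u}_1$ are fixed data) $L^\infty$ bounds
\begin{equation*}
\|\tilde{v}\|_{L^\infty}+\|\partial_{x_1}\tilde{v}\|_{L^\infty}+\|\partial_{x_1}^2\tilde{v}\|_{L^\infty}\leq C_{\tilde{u}},
\end{equation*}
which is all we will need. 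Comparing (\ref{cellecimodifiee7})--(\ref{modifiee8}) with (\ref{intvvv})--(\ref{vvvxx}) one sees that the perturbation enters only as additive terms of size $O(C_{\tilde{u}})$ in each of the formulas for $v(\phi)$, $\phi_{x_1}v_{x_1}(\phi)$, and $\phi_{x_1}^2 v_{x_1 x_1}(\phi)$.

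I would first redo the bootstrap on $\phi_{x_1}$. Assuming $|\phi_{x_1}|\leq 2$ on $[0,t_1]\subseteq[0,t_\varepsilon]$, the analogue of (\ref{ki}) becomes
\begin{equation*}
|\phi_{x_1}v_{x_1}|\leq \psi_\varepsilon(x_1)|\ln(x_1)|^\alpha + C_{\tilde{u}} + \frac{2}{|\ln|\ln(\varepsilon)||}C_g + \int_\tau C_g |\phi_{x_1}v_{x_1}|,
\end{equation*}
and Gr\"onwall's inequality still gives $|\phi_{x_1}v_{x_1}|\leq C|\ln(\varepsilon)|^\alpha e^{1/|\ln|\ln(\varepsilon)||}$, since $C_{\tilde{u}}$ is absorbed as a $o(|\ln(\varepsilon)|^\alpha)$ contribution. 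Reinjecting in (\ref{modifiee7}), the counterpart of (\ref{ceciestunlabel}) picks up the extra term $\partial_{x_1}\tilde v$, which is $O(1)$ while the dominant piece is $-|\ln(\varepsilon)|^\alpha/10$; hence for $\varepsilon$ small $\phi_{tx_1}\leq 0$, which via (\ref{modifiee9}) yields $\phi_{tx_1}<0$ and, once combined with continuity in $t$ and $(\phi_{x_1})_{|t=0}=1$, the bootstrap $0<\phi_{x_1}\leq 1$. The same argument, applied to $\chi'(\varepsilon)=-|\ln(\varepsilon)|^\alpha$ at the boundary, produces the existence of $t_\varepsilon$ with $t_\varepsilon\leq 4/|\ln(\varepsilon)|^\alpha$, giving (\ref{teps1lol}) and (\ref{teps2lol}); the signs of $v_{x_1}$, $v$, $\phi_{tx_1}$ and the bound $|v_{x_1}(\phi)|\leq C_\varepsilon(\chi'(x_1)+C_\varepsilon)/|\phi_{x_1}|$ are then read off exactly as before, the constants simply absorbing $C_{\tilde u}$.

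The Taylor expansion (\ref{modifiee12}) at $(\nu_\varepsilon^1,\nu_\varepsilon^2,t_\varepsilon)$ is obtained precisely as in Lemma~\ref{laconditionimportante}: at the minimum of $\phi_{x_1}$ one has $\phi_{x_1}=\phi_{x_1 x_1}=\phi_{x_1 x_2}=0$, and the nondegeneracy of the Hessian (together with $\phi_{t x_1}<0$) yields the claimed lower and upper bounds on $1/\phi_{x_1}$, using again that the perturbative contribution to $\phi_{x_1}$ coming from $\tilde v$ is, by (\ref{modifiee7}) and Gr\"onwall, smaller by a factor $1/|\ln(\varepsilon)|^\alpha$ than the leading quadratic form. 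The estimates (\ref{superestA12}) and (\ref{superestB13}) for the two pieces $A$ and $B$ of (\ref{modifiee8}) follow by the same telescoped Gr\"onwall argument that produced (\ref{superestA}) and (\ref{superestB}): in $A$ one collects all the terms not involving $\phi_{x_1 x_1}$ and bounds them by $C_\varepsilon|\ln(t_\varepsilon-t)|/|\phi_{x_1}|^2$, and the only genuinely new contribution $\partial_{x_1}^2 \tilde v/(\phi_{x_1})^2$ is dominated by $C_{\tilde u}/(\phi_{x_1})^2$, which is of the same or lower order; in $B$, the extra $\partial_{x_1}\tilde v$ added to the bracket multiplying $\phi_{x_1 x_1}/\phi_{x_1}^3$ is, by (\ref{kk1})--(\ref{kk2}) type estimates, of size $o_\varepsilon(|\chi'|)$ and so can be folded into the two-sided comparison with $\chi'/2$ and $2\chi'$.

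The main obstacle is the control of the $\partial_{x_1}^2 \tilde v$ term appearing in $A$: even though it is uniformly bounded in $\varepsilon$, one must verify that when combined with the iterated Gr\"onwall estimate for $v_{x_1 x_1}$ (the analogue of (\ref{estdeuxieme})--(\ref{esttroisieme})) it does not worsen the power of $1/\phi_{x_1}$. This is the one place where the $H^4\times H^3$ regularity of the data is genuinely used, ensuring that the extra term contributes only at the level of a bounded constant and thus does not alter the final logarithmic estimate. Once this point is checked, every later computation in Section~\ref{modif2} transfers verbatim, so the lemma (and hence Theorem~\ref{quatriemecas}) follows.
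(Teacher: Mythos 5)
The paper does not actually supply a proof of Lemma~\ref{modifiee11}: it states the lemma and then says the rest "is identical to what have been done when we added the source term." Your plan of mirroring the proof of Lemma~\ref{laconditionimportante}, treating $\tilde{v}$ and its $x_1$-derivatives as bounded perturbations swamped by $|\ln\varepsilon|^\alpha$, is therefore exactly the approach the paper intends, and the logical skeleton you lay out (bootstrap for $\phi_{x_1}$, existence of $t_\varepsilon$, Taylor expansion at the critical point, then the $A/B$ split) is correct.

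There is, however, one genuine gap in your justification. You claim that $(\tilde{u}_0,\tilde{u}_1)\in H^4(\mathbb{R}^2)\times H^3(\mathbb{R}^2)$ plus Sobolev embedding gives
\begin{equation*}
\|\tilde{v}\|_{L^\infty}+\|\partial_{x_1}\tilde{v}\|_{L^\infty}+\|\partial_{x_1}^2\tilde{v}\|_{L^\infty}\le C_{\tilde u}.
\end{equation*}
The first two bounds do follow: $\tilde v\in H^3$ and $\partial_{x_1}\tilde v\in H^2$, and $H^s(\mathbb{R}^2)\hookrightarrow L^\infty(\mathbb{R}^2)$ for $s>1$. But $\partial_{x_1}^2\tilde v=\partial_{x_1}^3\tilde u_0-\partial_{x_1}^2\tilde u_1$ lies only in $H^1(\mathbb{R}^2)$, and $H^1(\mathbb{R}^2)$ is the borderline space that does \emph{not} embed into $L^\infty(\mathbb{R}^2)$ (think of $\log\log(1/|x|)$ near the origin). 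Since $(\ref{superestA12})$ is a pointwise estimate $|A|\le C_\varepsilon|\ln(t_\varepsilon-t)|/\phi_{x_1}^2$, and the additional contribution $\partial_{x_1}^2\tilde v/\phi_{x_1}^2$ in $(\ref{modifiee8})$ must be matched against the right-hand side, you do genuinely need a pointwise bound on $\partial_{x_1}^2\tilde v$ at this stage — which the stated regularity does not deliver. This is precisely the "one place where the $H^4\times H^3$ regularity is genuinely used," as you yourself emphasize, so the issue is not peripheral.

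To close the gap you would need one of the following: (i) strengthen the data assumption to $(\tilde u_0,\tilde u_1)\in H^{4+\delta}\times H^{3+\delta}$ for some $\delta>0$ (which appears compatible with the spirit of the paper and is the cheapest fix), or (ii) retain $H^4\times H^3$ but replace the pointwise estimate on $A$ by an $L^p$ estimate in $x_1$ for finite $p$, exploiting that $H^1(\mathbb{R}^2)\hookrightarrow L^p$ for all $p<\infty$, and re-run the Hölder splittings in the $I_\varepsilon^j$ computations to show the contribution of $\partial_{x_1}^2\tilde v$ remains of strictly lower order than the $(t_\varepsilon-t)^{-9/4+3\lambda}$ main term. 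Either route is workable, but as written the Sobolev-embedding step is incorrect at the stated regularity, and the proof cannot be considered complete without addressing it.
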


We stop here for this case. The rest of the is identical to what have been done when we added the source term $f(t,x_1,x_2,\nabla u)$. 

\newpage

\begin{center}
	\textbf{Acknowledgements}
\end{center}

\emph{We thank Professor Joachim Krieger at EPFL and Professor Enno Lenzmann for their help and proofreading. The research was funded by EPFL and University of Basel. Since 01/10/2021, the research is supported by the Swiss National Science Fundation (SNSF) through Grant number 200020$\_$204121.}

\bibliographystyle{alpha}
\bibliography{bibliography}

\end{document}